\documentclass[10pt; a4paper]{amsart} %\documentclass[Ñ¡Ïî±í]{Àà}[°æ±ŸºÅyyyy/mm/dd]
\usepackage[left=2.50cm, right=2.50cm, top=2.50cm, bottom=2.50cm]{geometry} 
\usepackage{amscd, amsfonts, amsmath, amssymb, amsthm, arydshln, fixmath, graphicx, mathrsfs, overpic, tikz, tikz-cd}
\usepackage{hyperref}
\usepackage[all]{xy} %\usepackage[Ñ¡Ïî±í]{ºê°ü}[°æ±ŸºÅyyyy/mm/dd] ÊýÑ§×ÖÌåºê°ü:mathrsfs, amsfonts, amssymb
\usepackage{tikz}
\usepackage{pgfplots}
\usepackage{pgflibraryarrows}
\usepackage{pgflibrarysnakes}
\usepackage{dsfont}
\makeindex

\theoremstyle{definition} %±êÌâÓë±àºÅÎªºÚÌå, ÕýÎÄÎªÕý³£×ÖÌå
\newtheorem{Unity}{Unity}[section] %\newtheorem{¶šÀí»·Ÿ³Ãû}{±êÌâ}[Ö÷ŒÆÊýÆ÷Ãû]
\newtheorem*{Definition*}{Definition} %\newtheorem*{¶šÀí»·Ÿ³Ãû}[ÒÑ¶šÒå¶šÀí»·Ÿ³Ãû]{±êÌâ} ÊÖ¶¯±àºÅ, ²»×Ô¶¯±àºÅ
\newtheorem{Definition}[Unity]{Definition} %\newtheorem{¶šÀí»·Ÿ³Ãû}[ÒÑ¶šÒå¶šÀí»·Ÿ³Ãû]{±êÌâ} Óëµ±Ç°»·Ÿ³¹²ÓÃÍ¬Ò»žöÐòºÅŒÆÊýÆ÷

\theoremstyle{plain} %±êÌâÓë±àºÅÎªºÚÌå, ÕýÎÄÎªÐ±Ìå
\newtheorem*{Theorem*}{Theorem}
\newtheorem{Theorem}[Unity]{Theorem}
\newtheorem{Proposition}[Unity]{Proposition}
\newtheorem{Corollary}[Unity]{Corollary}
\newtheorem{Lemma}[Unity]{Lemma}

\theoremstyle{remark} %±êÌâÓë±àºÅÎªÐ±Ìå, ÕýÎÄÎªÕý³£×ÖÌå
\newtheorem*{Remark*}{Remark}
\newtheorem{Remark}[Unity]{Remark}

\numberwithin{Unity}{section}%\numberwithin{ŒÆÊýÆ÷}{Ö÷ŒÆÊýÆ÷}

\newcommand{\Id}{\mathrm{Id}}

\newcommand{\Hom}{\mathrm{Hom}}

\newcommand{\Rep}{\mathrm{Rep}}

\newcommand{\Spec}{\mathrm{Spec\,}}

\newcommand{\Qcoh}{\mathfrak{Qcoh}}
\newcommand{\Vect}{\mathfrak{Vect}}

\newcommand{\coker}{\mathrm{coker\,}}

\begin{document}

\title{The K\"unneth Formula of Fundamental Group Schemes}
\author{Lingguang Li}
\address{School of Mathematical Sciences,
Key Laboratory of Intelligent Computing and Applications (Tongji University), Ministry of Education, Shanghai 200092, CHINA}
\email{LiLg@tongji.edu.cn}
\author{Niantao Tian}
\address{School of Mathematical Sciences,
Key Laboratory of Intelligent Computing and Applications (Tongji University), Ministry of Education, Shanghai 200092, CHINA}
\email{tianniantao@tongji.edu.cn}
\begin{abstract} Let $k$ be a field, $f:X\rightarrow S$ a proper morphism between connected schemes proper over $k$, $x\in X(k)$ lying over $s\in S(k)$, $X_s$ the fibre of $f$ over $s$, $\mathcal{C}_X$, $\mathcal{C}_{S}$, $\mathcal{C}_{X_s}$ Tannakian categories over $X,S,X_s$ respectively, $\pi(\mathcal{C}_X,x)$, $\pi(\mathcal{C}_S,s)$, $\pi(\mathcal{C}_{X_s},x)$ the Tannaka group schemes respectively. We give a unified criterion for the exactness of the homotopy sequence of Tannakian fundamental group schemes $\pi(\mathcal{C}_{X_s},x)\rightarrow \pi(\mathcal{C}_X,x)\rightarrow \pi(\mathcal{C}_S,s)\rightarrow 1$. In particular, we obtain the equivalent conditions for the K\"unneth formula of fundamental group schemes for the product $X\times_k Y$ of two connected schemes $X$ and $Y$ proper over $k$. As an application, we obtain the K\"unneth formula of certain fundamental group schemes over any field, such as S, N, EN, F, EF, \'et, E\'et, Loc, ELoc and uni-fundamental group schemes.
\end{abstract}
\maketitle
\tableofcontents

\section{Introduction}

Grothendieck\cite{Gro60} introduced the étale fundamental group. Let $k$ be an algebraically closed field, $f:X\rightarrow S$ a separable proper surjective morphism with geometrically connected fibres between locally Noetherian connected schemes, $\bar{x}\rightarrow X$ a geometric point lying over a geometric point $\bar{s}\rightarrow S$, and $X_{\bar{s}}$ the fibre at $\bar{s}$. Grothendieck\cite{Gro61} proved that there is an exact homotopy sequence for the \'{e}tale fundamental group:
$$\pi^{\acute{e}t}_1(X_s,\bar{x})\rightarrow\pi^{\acute{e}t}_1(X,\bar{x})\rightarrow\pi^{\acute{e}t}_1(S,\bar{s})\rightarrow 1.$$
In particular, let $Y$ be a locally Noetherian connected scheme over $k$, $\bar{y}\rightarrow Y$ a geometric point. Grothendieck\cite{Gro61} proved that the Künneth formula holds for \'{e}tale fundamental group:
$$\pi^{\acute{e}t}_1(X\times_k Y,(x,y))\xrightarrow{\cong} \pi^{\acute{e}t}_1(X,x)\times_k \pi^{\acute{e}t}_1(Y,y).$$

Nori\cite{Nor82} introduced the Nori fundamental group scheme. Let $k$ be a field, $X$ a reduced connected locally noetherian scheme over $k$, $x \in X(k)$. The Nori fundamental group scheme is defined as the projective limit 
$$\pi^N(X,x):=\varprojlim\limits_{N(X,x)} G,$$ 
where $N(X,x)$ consists of triples $(P,G,p)$ where $P$ is an FPQC $G$-torsor, $G$ is a finite group scheme and $p\in P(k)$ is a rational point lying over $x$. If $k$ is a field and $X$ is a geometrically reduced connected scheme proper over $k$, then the category of finite dimensional representations of $\pi^N(X,x)$ is equivalent to $\mathcal{C}^N(X)$ whose objects are essentially finite vector bundles. Let $k$ be a perfect field, $f:X\rightarrow S$ a separable proper surjective morphism with geometrically connected fibres between locally Noetherian reduced connected proper schemes over $k$, $x\in X(k)$ lying over $s\in S(k)$. Esnault, Hai \& Viehweg\cite{EHV09} and  Zhang\cite{Zha13} gave necessary and sufficient conditions for the exactness of the homotopy sequence of the Nori fundamental group scheme
$$\pi^N(X_s,x)\rightarrow \pi^N(X,x)\rightarrow\pi^N(S,s)\rightarrow 1.$$
A counterexample in the projective case is stated by Esnault, Hai \& Viehweg\cite{EHV09}. Let $k$ be an algebraically closed field, $X$ and $Y$ reduced connected schemes proper over $k$, $x\in X(k)$, $y\in Y(k)$. Mehta \& Subramanian\cite{MeSu02} showed that the Künneth formula holds for Nori fundamental group scheme:
$$\pi^N(X\times_k Y,(x,y))\xrightarrow{\cong}\pi^N(X,x)\times_k \pi^N(Y,y).$$

Nori\cite{Nor82} introduced the unipotent fundamental group scheme. Let $k$ be a field, $X$ a scheme of finite type over $k$ with $H^0(X,\mathcal{O}_X)=k$, $x\in X(k)$. The unipotent fundamental group scheme $\pi^{uni}(X,x)$ is the Tannaka group scheme of the Tannakian category $\mathcal{C}^{uni}(X)$ whose objects consist of unipotent vector bundles on $X$. Let $k$ be a field, $X$ and $Y$ schemes of finite type proper over $k$ with $H^0(X,\mathcal{O}_X)=H^0(Y,\mathcal{O}_Y)=k$, $x\in X(k)$, $y\in Y(k)$. Nori\cite{Nor82} showed that the Künneth formula holds for unipotent fundamental group scheme:
$$\pi^{uni}(X\times_k Y,(x,y))\xrightarrow{\cong}\pi^{uni}(X,x)\times_k \pi^{uni}(Y,y).$$

Mehta \& Subramanian\cite{MeSu08} introduced the local fundamental group scheme. Let $k$ be an algebraically closed field of characteristic $p>0$, $X$ a smooth projective variety over $k$, $x\in X(k)$. The local fundamental group scheme $\pi^{Loc}(X,x)$ is the Tannaka group scheme of the Tannakian category $\mathcal{C}^{Loc}(X)$ whose objects consist of Frobenius trivial bundles. Let $k$ be a perfect field, $X$ and $Y$ reduced connected schemes proper over $k$, $x\in X(k)$, $y\in Y(k)$. Zhang\cite{Zha13} showed the Künneth formula holds for local fundamental group scheme:
$$\pi^{Loc}(X\times_k Y,(x,y))\xrightarrow{\cong}\pi^{Loc}(X,x)\times_k \pi^{Loc}(Y,y).$$

Amrutiya \& Biswas\cite{AmBi10} introduced the F-fundamental group scheme. Let $k$ be an algebraically closed field of characteristic $p>0$, $X$ a smooth projective variety over $k$, $x\in X(k)$. The F-fundamental group scheme $\pi^F(X,x)$ is the Tannaka group scheme of the Tannakian category $\mathcal{C}^F(X)$ which is the Tannakian subcategory of $\mathcal{C}^{NF}(X)$ generated by all Frobenius finite vector bundles on $X$. Let $k$ be an algebraically closed field, $X$ and $Y$ smooth projective varieties over $k$, $x\in X(k)$, $y\in Y(k)$. Amrutiya \& Biswas\cite{AmBi10} showed that the Künneth formula holds for F-fundamental group scheme:
$$\pi^{F}(X\times_k Y,(x,y))\xrightarrow{\cong}\pi^{F}(X,x)\times_k \pi^{F}(Y,y).$$

Langer\cite{Lan11} introduced the S-fundamental group scheme. Let $k$ be an algebraically closed field, $X$ a reduced connected scheme proper over $k$, $x\in X(k)$. The S-fundamental group scheme $\pi^{S}(X,x)$ is the Tannaka group scheme of Tannakian category $\mathcal{C}^{NF}(X)$ whose objects consist of numerically flat vector bundles on $X$. Let $k$ be an algebraically closed field, $X$ and $Y$ reduced connected schemes proper over $k$, $x\in X(k)$, $y\in Y(k)$. Langer\cite{Lan12} showed that the Künneth formula holds for S-fundamental group scheme:
$$\pi^{S}(X\times_k Y,(x,y))\xrightarrow{\cong}\pi^{S}(X,x)\times_k \pi^{S}(Y,y).$$

Let $k$ be a field, $X$ a smooth algebraic $k$-variety, $x\in X(k)$. Then the stratified fundamental group scheme $\pi^{str}(X,x)$ is defined to be the Tannaka group scheme of $\mathcal{D}_X$ whose objects consist of $D_X$-modules over $X$. Let $k$ be a field of characteristic 0, $f:X\rightarrow S$ a smooth proper morphism with geometrically connected fibres between two connected schemes of finite type over $k$, $x\in X(k)$ lying over $s\in S(k)$. Zhang\cite{Zha14} proved the exactness of homotopy sequence of stratified fundamental group scheme:
$$\pi^{str}(X_s,x)\rightarrow \pi^{str}(X,x)\rightarrow\pi^{str}(S,s)\rightarrow 1.$$
Let $k$ be an algebraically closed field, $f:X\rightarrow S$ be a smooth projective morphism with geometrically connected fibres between two smooth connected $k$-schemes, $x\in X(k)$ lying over $s\in S(k)$. Dos Santos\cite{dS15} proved that the exactness of homotopy sequence of stratified fundamental group scheme:
$$\pi^{str}(X_s,x)\rightarrow \pi^{str}(X,x)\rightarrow\pi^{str}(S,s)\rightarrow 1.$$ 
Consequently, the Künneth formula also holds for the stratified fundamental group scheme.

Otabe\cite{Ota17} introduced the EN-fundamental group scheme. Let $k$ be a field of characteristic zero, $X$ a reduced connected scheme proper over $k$, $x\in X(k)$. The EN-fundamental group scheme $\pi^{EN}(X,x)$ is the Tannaka group scheme of the Tannakian category $C^{EN}(X)$ whose objects consist of semi-finite vector bundles on $X$. Let $k$ be a field of characteristic zero, $X$ and $Y$ reduced connected schemes proper over $k$, $x\in X(k)$, $y\in Y(k)$. Otabe\cite{Ota17} showed that the Künneth formula holds for EN-fundamental group scheme:
$$\pi^{EN}(X\times_k Y,(x,y))\xrightarrow{\cong}\pi^{EN}(X,x)\times_k \pi^{EN}(Y,y).$$

$\mathbf{Notations:}$ Let $k$ be a field, $X$ a scheme over $k$, $x \in X(k)$, $\mathcal{C}_X$ a Tannakian category whose objects consist of vector bundles on $X$ with fibre functor $|_x:E\mapsto E|_x$, $\pi(\mathcal{C}_X,x)$ the Tannaka group scheme of $\mathcal{C}_X$.

\begin{Theorem}[Theorem~\ref{Main}]
Let $k$ be a field, $f:X\rightarrow S$ a morphism of connected schemes proper over $k$ with $f_*\mathcal{O}_X\cong\mathcal{O}_S$, $x\in X(k)$ lying over $s\in S(k)$, $\mathcal{C}_X,\mathcal{C}_S,\mathcal{C}_{X_s}$ the Tannakian categories over $X,S,X_s$ respectively. If pullback induces functors $\mathcal{C}_S\xrightarrow{f^*}\mathcal{C}_X\xrightarrow{|_{X_s}}\mathcal{C}_{X_s}$. Then the following conditions are equivalent:
\begin{itemize}
    \item[(1)] \begin{itemize}
            \item[i.] The functor $|_{X_s}:\mathcal{C}_X\rightarrow \mathcal{C}_{X_s}$ is observable;
            \item[ii.] For any $E\in \mathcal{C}_X$, $f_* E\in\mathcal{C}_S$ and $E$ satisfies base change at $s$.
        \end{itemize} 
	\item[(2)]  \begin{itemize}
            \item[i.] The functor $|_{X_s}:\mathcal{C}_X\rightarrow \mathcal{C}_{X_s}$ is observable;
            \item[ii.] For any subset $M\subseteq\mathcal{C}_X$ 
             and its corresponding principal $\pi(\langle M\rangle_{\mathcal{C}_X},x)$-principal bundle $\phi_M: P_M\rightarrow X$, there exists a $($for any$)$ filtered colimit system $\{{\mathcal{E}_M}_{\alpha}\}_{\alpha\in I}$ of subsheaves of ${\phi_M}_*\mathcal{O}_{P_M}$, s.t. ${\phi_M}_*\mathcal{O}_{P_M}=\varinjlim{\mathcal{E}_M}_{\alpha}$ where ${\mathcal{E}_M}_{\alpha}\in \mathcal{C}_X$, $f_* {\mathcal{E}_M}_{\alpha}\in\mathcal{C}_S$ and ${\mathcal{E}_M}_{\alpha}$ satisfies base change at $s$.
                \end{itemize}
    \item[(3)] \begin{itemize}
            \item[i.] The functor $|_{X_s}:\mathcal{C}_X\rightarrow \mathcal{C}_{X_s}$ is observable;
            \item[ii.] Let $\phi: P\rightarrow X$ be the principal $\pi(\mathcal{C}_X,x)$-bundle corresponding to $\mathcal{C}_X$. There exists a $($for any$)$ filtered colimit system $\{\mathcal{E}_\alpha\}_{\alpha\in I}$ of subsheaves of $\phi_*\mathcal{O}_P$, s.t. $\phi_*\mathcal{O}_P=\varinjlim\mathcal{E}_{\alpha}$ where $\mathcal{E}_{\alpha}\in \mathcal{C}_X$, $f_* \mathcal{E}_{\alpha}\in\mathcal{C}_S$ and $\mathcal{E}_{\alpha}$ satisfies base change at $s$.
            \end{itemize}
\end{itemize}
The above equivalent conditions imply the following exact homotopy sequence $$\pi(\mathcal{C}_{X_s},x)\rightarrow\pi(\mathcal{C}_X,x)\rightarrow\pi(\mathcal{C}_S,s)\rightarrow 1$$
Moreover, if we suppose further that $S$ is integral and $f$ is a flat morphism, then the exactness of homotopy sequence also implies the above three equivalent conditions.
\end{Theorem}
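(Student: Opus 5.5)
We will deduce the theorem from the Tannakian exactness criterion of Esnault--Hai--Sun (as refined by dos Santos and Zhang). For a sequence of Tannakian functors $\mathcal{C}_S\xrightarrow{f^*}\mathcal{C}_X\xrightarrow{|_{X_s}}\mathcal{C}_{X_s}$, exactness of the dual sequence of group schemes means the following:
\begin{itemize}
\item[(a)] $f^*$ is fully faithful and closed under subobjects, so $\pi(\mathcal{C}_X,x)\to\pi(\mathcal{C}_S,s)$ is faithfully flat.
\item[(b)] For $E\in\mathcal{C}_X$, $E|_{X_s}$ is trivial if and only if $E\cong f^*F$ for some $F\in\mathcal{C}_S$.
\item[(c)] For $E\in\mathcal{C}_X$, the maximal trivial subobject of $E|_{X_s}$ is the restriction of a subobject of $E$ (this uses observability, condition i).
\end{itemize}

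\textbf{Step 1: (1) gives exactness.} First, $f_*\mathcal{O}_X=\mathcal{O}_S$ and the projection formula give $f_*f^*F=F$, so $f^*$ is fully faithful.

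For a subobject $E\subseteq f^*F$ in $\mathcal{C}_X$, we compare $f^*f_*E\to E$ with the subobject $f^*f_*E\subseteq f^*F$. Base change at $s$ and the fact that $E|_{X_s}\subseteq\mathcal{O}^r_{X_s}$ is a subobject in $\mathcal{C}_{X_s}$ (hence trivial by observability, equivalently by the fibre-functor argument) show that $f^*f_*E\to E$ is an isomorphism at $x$. Hence it is an isomorphism in $\mathcal{C}_X$, since morphisms in a Tannakian category are determined by their fibres at the base point. This gives (a).

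For (b): if $E|_{X_s}$ is trivial, base change gives $\dim(f_*E)|_s=\operatorname{rk}E$. Then $f^*f_*E\to E$ is a morphism in $\mathcal{C}_X$ (because $f_*E\in\mathcal{C}_S$) that is an isomorphism at $x$, hence an isomorphism.

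For (c): the natural candidate is $f^*f_*E\to E$. By base change its image restricts to $H^0(X_s,E|_{X_s})\otimes\mathcal{O}$, which is the maximal trivial subobject, and the image lies in $\mathcal{C}_X$ because $\mathcal{C}_X$ is abelian.

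\textbf{Step 2: equivalence of (1), (2), (3).}
\begin{itemize}
\item (1)$\Rightarrow$(2): take the colimit system of finite sub-bundles of the regular representation. The structure sheaf ${\phi_M}_*\mathcal{O}_{P_M}$ corresponds to $k[\pi(\langle M\rangle,x)]$, which is the filtered union of its finite-dimensional subrepresentations; these give objects of $\langle M\rangle\subseteq\mathcal{C}_X$, and we apply (1)ii to each.
\item (2)$\Rightarrow$(3): take $M=\mathcal{C}_X$.
\item (3)$\Rightarrow$(1): every $E\in\mathcal{C}_X$ embeds into $\phi_*\mathcal{O}_P^{\oplus n}$, since every representation embeds into $V\otimes k[G]$. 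Being finite, $E$ therefore lies in some $\mathcal{E}_\alpha^{\oplus n}$, and we obtain $E$ as the kernel of a map $\mathcal{E}_\alpha^{\oplus n}\to\mathcal{E}_\beta^{\oplus m}$ using the injective cogenerator property of $k[G]$.
\end{itemize}
In the last implication we must then transfer ``$f_*\in\mathcal{C}_S$ and base change at $s$'' to kernels. Left exactness of $f_*$ gives $f_*E=\ker(f_*\mathcal{E}_\alpha^{n}\to f_*\mathcal{E}_\beta^{m})$, and this kernel lies in $\mathcal{C}_S$ because $\mathcal{C}_S$ is abelian with $f^*$ exact. For base change, we compare with $H^0(X_s,-)$, which is also left exact, and conclude by the five lemma. \textbf{This is the main obstacle}: base change is not obviously inherited by kernels, and the fibre at $s$ of the kernel need not be the kernel of the fibres. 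My plan is to use the splitting given by Step 1's identification $f^*f_*\mathcal{E}\cong$ the maximal $f^*\mathcal{C}_S$-subobject, which is a direct summand-like sub in $\mathcal{C}_X$, so that fibre functors, being exact on $\mathcal{C}_S$, commute with the kernel.

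\textbf{Step 3: converse when $S$ is integral and $f$ is flat.} Assume exactness and let $E\in\mathcal{C}_X$. By (c), the maximal trivial subobject $T\subseteq E|_{X_s}$ lifts to a subobject $E'\subseteq E$ with $E'|_{X_s}=T$ trivial. By (b), $E'\cong f^*F$, so $f_*E'=F\in\mathcal{C}_S$. The map $f_*E'\to f_*E$ is an isomorphism: it is injective, and by flatness and integrality $f_*E$ is torsion free, with generic rank bounded by $h^0$ of a general fibre. Semicontinuity combined with $E'\subseteq E$ and homogeneity of $\mathcal{C}_X$ along $S$ then gives equality. Hence $f_*E=F\in\mathcal{C}_S$ and $(f_*E)|_s=H^0(X_s,E|_{X_s})$, which is base change at $s$. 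Observability is part of the exactness criterion (c), which completes the converse.
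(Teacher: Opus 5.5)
Your outline follows the paper's proof closely. The shared ingredients are the Esnault--Hai--Sun criterion, $f^*f_*E\to E$ as the lift of the maximal trivial subobject, and embedding $E$ into sums of the $\mathcal{E}_\alpha$ with left exactness; in the converse, both lift the maximal trivial subobject of $E|_{X_s}$ to $E'\cong f^*F$. However, the final step of Step 3 does not go through as written.

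What semicontinuity gives you is only local. For $t$ in a neighbourhood $U$ of $s$ one has $\operatorname{rk}F=h^0(X_s,E|_{X_s})\geq h^0(X_t,E|_{X_t})\geq h^0(X_t,E'|_{X_t})\geq \operatorname{rk}F$. So $h^0$ is constant on $U$, and by Grauert ($S$ reduced, $f$ flat) $f_*E'\to f_*E$ is an isomorphism over $U$. This already gives base change at $s$.

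To conclude $f_*E\cong F$ on all of $S$, however, injectivity, equal generic ranks and torsion-freeness of $f_*E$ are not enough. An injection of torsion-free sheaves that is an isomorphism on a dense open need not be surjective (e.g.\ $\mathcal{O}_{\mathbb{P}^1}(-1)\hookrightarrow\mathcal{O}_{\mathbb{P}^1}$). The appeal to ``homogeneity of $\mathcal{C}_X$ along $S$'' does not fill this gap: the hypothesis concerns only the fibre over $s$, and nothing controls $h^0(X_t,E|_{X_t})$ away from $U$.

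The missing idea is to look at the cokernel instead of $f_*E$. Applying $f_*$ to $0\to E'\to E\to E/E'\to 0$ gives $\coker(f_*E'\to f_*E)\hookrightarrow f_*(E/E')$. The sheaf $f_*(E/E')$ is torsion free because $E/E'\in\mathcal{C}_X$ is locally free, $f$ is flat and $S$ is integral. Hence the cokernel is torsion free and vanishes on $U$, so it is zero. This is precisely where the paper uses flatness and integrality.

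Two smaller points. First, your list (a)--(c) is not the Esnault--Hai--Sun criterion. Your (c) is their condition (b), lifting the maximal trivial subobject, which needs no observability, as your own Step 1 shows. Their condition (c) is missing: every quotient of $E|_{X_s}$ must embed into some $F|_{X_s}$ with $F\in\mathcal{C}_X$. By Lemma~\ref{observable} that condition is literally condition i, so with the correct list condition i is used verbatim in Step 1 and recovered verbatim in Step 3. Likewise, a subobject of a trivial object is trivial for purely Tannakian reasons, not by observability.

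Second, the ``main obstacle'' in Step 2 is not one. The map $f_*(\bigoplus_i\mathcal{E}_{\alpha_i})\to f_*(\bigoplus_j\mathcal{E}_{\beta_j})$ is a morphism in $\mathcal{C}_S$ with kernel $f_*E$, and the fibre functor $|_s$ is exact on $\mathcal{C}_S$. So $(f_*E)|_s$ is the kernel of the fibres, and comparing with the left exact $H^0(X_s,-)$ row finishes the argument, exactly as in the paper. No splitting is needed, and none is available in general, since $f^*f_*\mathcal{E}\hookrightarrow\mathcal{E}$ need not be a direct summand.
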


\begin{Theorem}[Theorem~\ref{kunnethMain}]
    Let $k$ be a field, $X$ and $Y$ connected schemes proper over $k$, $x_0\in X(k)$, $y_0\in Y(k)$, $p_X: X\times_k Y\rightarrow X$ and $p_Y:X\times_k Y\rightarrow Y$ the projections, $i_X: X\rightarrow X\times_k Y$ send $x$ to $(x,y_0)$ and $i_Y: Y\rightarrow X\times_k Y$ send $y$ to $(x_0,y)$, $\mathcal{C}_X,\mathcal{C}_Y,\mathcal{C}_{X\times_k Y}$ the Tannakian categories over $X$, $Y$, $X\times_k Y$ respectively. Suppose pullback induce functors among Tannakian categories
$$\begin{aligned}
    \begin{tikzcd}
	Y\arrow[r,shift left=2pt,harpoon,"i_Y"]\arrow[d]&X\times_k Y\arrow[d,shift left=2pt,harpoon,"p_X"]\arrow[l,shift left=2pt,harpoon,"p_Y"]\\
	\Spec k\arrow[r]&X\arrow[u,shift left=2pt,harpoon,"i_X"]
    \end{tikzcd}&\quad\quad&\begin{tikzcd}
	\mathcal{C}_Y\arrow[r,shift left=2pt,harpoon,"p_Y^*"]&\mathcal{C}_{X\times_k Y}\arrow[d,shift left=2pt,harpoon,"i_X^*"]\arrow[l,shift left=2pt,harpoon,"i_Y^*"]\\
	&\mathcal{C}_X\arrow[u,shift left=2pt,harpoon,"p_X^*"]
    \end{tikzcd}\end{aligned}.$$
Then the following five conditions are equivalent:
\begin{enumerate}
    \item For any $E\in \mathcal{C}_{X\times_k Y}$, ${p_X}_*E\in\mathcal{C}_X$ and $E$ satisfies base change at $x_0$.
    \item For any $E\in \mathcal{C}_{X\times_k Y}$, ${p_Y}_*E\in\mathcal{C}_Y$ and $E$ satisfies base change at $y_0$.
    \item The homotopy sequence $1\rightarrow \pi(\mathcal{C}_Y,y_0)\rightarrow \pi(\mathcal{C}_{X\times_k Y},(x_0,y_0))\rightarrow \pi(\mathcal{C}_X,x_0)\rightarrow 1$ is exact.
    \item The homotopy sequence $1\rightarrow \pi(\mathcal{C}_X,x_0)\rightarrow \pi(\mathcal{C}_{X\times_k Y},(x_0,y_0))\rightarrow \pi(\mathcal{C}_Y,y_0)\rightarrow 1$ is exact.
    \item The natural homomorphism $\pi(\mathcal{C}_{X\times_k Y},(x_0,y_0))\rightarrow \pi(\mathcal{C}_X,x_0)\times_k \pi(\mathcal{C}_Y,y_0)$ is an isomorphism.
\end{enumerate}
\end{Theorem}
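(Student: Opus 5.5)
We deduce the statement from Theorem~\ref{Main}, applied to the two projections $p_X\colon X\times_k Y\to X$ and $p_Y\colon X\times_k Y\to Y$, together with the splittings supplied by the sections $i_X,i_Y$. The projection $p_X$ is proper, and it is flat since $Y\to\Spec k$ is flat. Its fibre over $x_0$ is identified with $Y$ via $i_Y$, and $p_X\circ i_X=\Id_X$. By flat base change, ${p_X}_*\mathcal{O}_{X\times_k Y}\cong\mathcal{O}_X\otimes_k H^0(Y,\mathcal{O}_Y)$. We therefore need $H^0(Y,\mathcal{O}_Y)=k$ in order to apply Theorem~\ref{Main}. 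This should follow from the existence of the Tannakian category $\mathcal{C}_Y$ with fibre functor at $y_0$, since $\mathrm{End}(\mathcal{O}_Y)=k$. If instead the hypothesis of Theorem~\ref{Main} is only used via the projection formula, we use flat base change directly.

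\textbf{Step 1: observability.} We check condition (1)i of Theorem~\ref{Main} for $|_{Y}=i_Y^*\colon\mathcal{C}_{X\times_k Y}\to\mathcal{C}_Y$. Since $i_Y^*\circ p_Y^*\cong\Id$, the functor $i_Y^*$ is essentially surjective and admits a tensor section. Dually, $\pi(\mathcal{C}_Y,y_0)\to\pi(\mathcal{C}_{X\times_k Y},(x_0,y_0))$ has a retraction, so it is a closed immersion. Any subobject of $i_Y^*E$ is then a subquotient of an object in the image, so observability holds automatically. The same argument applies symmetrically to $i_X^*$.

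\textbf{Step 2: the equivalences $(1)\Leftrightarrow(3)$ and $(2)\Leftrightarrow(4)$.} Step 1 reduces condition (1) of Theorem~\ref{Main} exactly to our condition (1). Theorem~\ref{Main} then gives exactness of $\pi(\mathcal{C}_Y)\to\pi(\mathcal{C}_{X\times Y})\to\pi(\mathcal{C}_X)\to1$. Injectivity on the left comes from the retraction in Step 1, so we get (3). Conversely, $X$ is not assumed integral, so the converse part of Theorem~\ref{Main} cannot be quoted verbatim. I would redo its argument, which needs only flatness and the section. Assuming (3), every $E\in\mathcal{C}_{X\times Y}$ has ${p_X}_*E$ equal to the object of $\mathcal{C}_X$ corresponding to the $\pi(\mathcal{C}_Y)$-invariants of $E|_{(x_0,y_0)}$. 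Base change at $x_0$ then follows from $H^0(Y,i_Y^*E)=(E|_{(x_0,y_0)})^{\pi(\mathcal{C}_Y)}$. This identification of ${p_X}_*E$ is the main obstacle. My first attempt would be to pass to the maximal subobject $E'\subseteq E$ on which $\pi(\mathcal{C}_Y)$ acts trivially. Then $E'\cong p_X^*F$ with $F:=i_X^*E'\in\mathcal{C}_X$. Next I would show that ${p_X}_*E=F$, comparing global sections on fibres over all points of $X$, possibly after a field extension, using the translates of the section. The equivalence $(2)\Leftrightarrow(4)$ is the same argument with $X$ and $Y$ interchanged.

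\textbf{Step 3: $(3)\Leftrightarrow(5)\Leftrightarrow(4)$.} This is pure group theory. Let $G=\pi(\mathcal{C}_{X\times Y})$, and let $G\to\pi(\mathcal{C}_X)\times\pi(\mathcal{C}_Y)$ be induced by $p_X^*,p_Y^*$. The closed subgroups $\pi(\mathcal{C}_Y)$ and $\pi(\mathcal{C}_X)$ of $G$ embed via $i_Y,i_X$, and the compositions $i_Y^*p_X^*$ and $i_X^*p_Y^*$ are trivial. Given (3), $G$ is a semidirect product $\pi(\mathcal{C}_Y)\rtimes\pi(\mathcal{C}_X)$, and the map to $\pi(\mathcal{C}_Y)$ restricts to the identity on the kernel. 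Hence the map to the product is an isomorphism, which is (5). The same argument gives (4)$\Rightarrow$(5). Conversely, (5) trivially gives both split exact sequences (3) and (4).
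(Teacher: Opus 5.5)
\textbf{The gap: no implication returns to (1) or (2).}
Your Step 1, Step 3 and the forward half of Step 2 are fine and agree with the paper. You get (1)$\Rightarrow$(3) from Theorem~\ref{Main}, with observability and the closed immersion both coming from $i_Y^*\circ p_Y^*=\Id$, and (3)$\Rightarrow$(5) from the two retractions, which is Lemma~\ref{split}. Your remark that $H^0(Y,\mathcal{O}_Y)=\mathrm{End}(\mathcal{O}_Y)=k$ is what gives ${p_X}_*\mathcal{O}_{X\times_k Y}\cong\mathcal{O}_X$ is correct, and the paper leaves it implicit.

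What you actually prove is (1)$\Rightarrow$(3)$\Leftrightarrow$(5)$\Leftrightarrow$(4)$\Leftarrow$(2). The converse (3)$\Rightarrow$(1) is only a tentative plan, and the plan does not work as stated. From $p_X^*F\cong E'\hookrightarrow E$ you know $\dim H^0$ matches $\mathrm{rk}\,F$ only on the fibre over $x_0$. Semicontinuity then gives ${p_X}_*E\cong F$ only on some open neighbourhood of $x_0$.

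Nothing in your sketch controls $H^0(p_X^{-1}(t),E|_{p_X^{-1}(t)})$ for $t\neq x_0$. Condition (3) concerns only the fibre functor at $(x_0,y_0)$ and restriction to the single fibre over $x_0$. There is no Tannakian category on $Y_{\kappa(t)}$ along which to ``translate the section''. Since $X$ is merely connected, possibly reducible and non-reduced, there is no torsion-freeness argument to spread the neighbourhood to all of $X$. This is exactly why the converse in Theorem~\ref{Main} needs $S$ integral. The paper never uses that converse here.

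\textbf{The missing idea: prove (5)$\Rightarrow$(1) directly from the product structure.}
\begin{enumerate}
\item Under (5), the representation attached to $E\in\mathcal{C}_{X\times_k Y}$ embeds into $\bigl(k[\pi(\mathcal{C}_X,x_0)]\otimes_k k[\pi(\mathcal{C}_Y,y_0)]\bigr)^{\oplus n}$. Hence it embeds into some $V_X\otimes V_Y$, that is, $E\hookrightarrow p_X^*E_X\otimes p_Y^*E_Y$ with $E_X\in\mathcal{C}_X$ and $E_Y\in\mathcal{C}_Y$.
\item Doing the same for the cokernel gives an exact sequence $0\to E\to p_X^*E_X\otimes p_Y^*E_Y\to p_X^*E'_X\otimes p_Y^*E'_Y$ in $\mathcal{C}_{X\times_k Y}$.
\item By the projection formula and flat base change, ${p_X}_*(p_X^*E_X\otimes p_Y^*E_Y)\cong E_X\otimes_k H^0(Y,E_Y)$. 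So ${p_X}_*E$ is the kernel of a morphism in $\mathcal{C}_X$, and therefore lies in $\mathcal{C}_X$.
\item Take the fibre at $x_0$, which is exact on $\mathcal{C}_X$. Compare the result with the left exact sequence $0\to H^0(Y,i_Y^*E)\to H^0(Y,E_X|_{x_0}\otimes_k E_Y)\to H^0(Y,E'_X|_{x_0}\otimes_k E'_Y)$. The two right-hand comparison maps are isomorphisms, so $({p_X}_*E)|_{x_0}\cong H^0(Y,i_Y^*E)$.
\end{enumerate}
This closes the cycle (1)$\Rightarrow$(3)$\Rightarrow$(5)$\Rightarrow$(1), and symmetrically for (2) and (4). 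It needs no flatness, no integrality, and no fibre other than the one over $x_0$.
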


\begin{Proposition}[Proposition~\ref{kunnethdescent}]
    Let $k$ be a field, $X$ and $Y$ connected schemes proper over $k$, $x_0\in X(k)$, $y_0\in Y(k)$, $p_X: X\times_k Y\rightarrow X$ and $p_Y:X\times_k Y\rightarrow Y$ the projections, $i_X: X\rightarrow X\times_k Y$ send $x$ to $(x,y_0)$ and $i_Y: Y\rightarrow X\times_k Y$ send $y$ to $(x_0,y)$, $\mathcal{C}'_X,\mathcal{C}'_Y,\mathcal{C}'_{X\times_k Y}$ the Tannakian categories over $X$, $Y$, $X\times_k Y$ respectively, $\mathcal{C}_X\subseteq \mathcal{C}'_X,\mathcal{C}_Y\subseteq \mathcal{C}'_Y,\mathcal{C}_{X\times_k Y}\subseteq \mathcal{C}'_{X\times_k Y}$ the Tannakian subcategories. Suppose pullback induce functors among Tannakian categories
$$\begin{aligned}
    \begin{tikzcd}
	Y\arrow[r,shift left=2pt,harpoon,"i_Y"]\arrow[d]&X\times_k Y\arrow[d,shift left=2pt,harpoon,"p_X"]\arrow[l,shift left=2pt,harpoon,"p_Y"]\\
	\Spec k\arrow[r]&X\arrow[u,shift left=2pt,harpoon,"i_X"]
    \end{tikzcd}&\quad\quad&\begin{tikzcd}
	\mathcal{C}'_Y\arrow[r,shift left=2pt,harpoon,"p_Y^*"]&\mathcal{C}'_{X\times_k Y}\arrow[d,shift left=2pt,harpoon,"i_X^*"]\arrow[l,shift left=2pt,harpoon,"i_Y^*"]\\
	&\mathcal{C}'_X\arrow[u,shift left=2pt,harpoon,"p_X^*"]
    \end{tikzcd}&\quad\quad&\begin{tikzcd}
	\mathcal{C}_Y\arrow[r,shift left=2pt,harpoon,"p_Y^*"]&\mathcal{C}_{X\times_k Y}\arrow[d,shift left=2pt,harpoon,"i_X^*"]\arrow[l,shift left=2pt,harpoon,"i_Y^*"]\\
	&\mathcal{C}_X\arrow[u,shift left=2pt,harpoon,"p_X^*"]
    \end{tikzcd}\end{aligned}.$$
    If the natural homomorphism $\pi(\mathcal{C}'_{X\times_k Y},(x_0,y_0))\rightarrow \pi(\mathcal{C}'_X,x_0)\times_k \pi(\mathcal{C}'_Y,y_0)$ is an isomorphism, then the natural homomorphism $\pi(\mathcal{C}_{X\times_k Y},(x_0,y_0))\rightarrow \pi(\mathcal{C}_X,x_0)\times_k \pi(\mathcal{C}_Y,y_0)$ is an isomorphism.
\end{Proposition}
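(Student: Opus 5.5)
The plan is a direct group-theoretic argument. It does not use the geometric criteria of Theorem~\ref{kunnethMain}: checking condition (1) of that theorem for $\mathcal{C}$ would require ${p_X}_*E\in\mathcal{C}_X$ rather than merely in $\mathcal{C}'_X$, which is not evident.

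Write $G'=\pi(\mathcal{C}'_{X\times_k Y},(x_0,y_0))$, $G'_1=\pi(\mathcal{C}'_X,x_0)$, $G'_2=\pi(\mathcal{C}'_Y,y_0)$, and let $G,G_1,G_2$ be the analogous groups for $\mathcal{C}$. Since $\mathcal{C}_\ast\subseteq\mathcal{C}'_\ast$ are Tannakian subcategories (full and closed under subquotients), the induced maps $G'\to G$, $G'_1\to G_1$, $G'_2\to G_2$ are faithfully flat. Let $N_1,N_2$ be the kernels of the last two. All the maps induced by $p_X,p_Y,i_X,i_Y$ are compatible with these quotient maps, because the pullback functors on $\mathcal{C}$ are restrictions of those on $\mathcal{C}'$ and the fibre functors agree ($i_X(x_0)=i_Y(y_0)=(x_0,y_0)$).

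\emph{Step 1: $G\to G_1\times_k G_2$ is faithfully flat.} Since $p_X\circ i_X=\Id$ and $p_Y\circ i_X$ factors through $y_0$, the composite $G_1\xrightarrow{i_X}G\to G_1\times G_2$ is the inclusion $g\mapsto(g,1)$. Symmetrically, $G_2$ maps onto $1\times G_2$. Hence the image, which is a closed subgroup scheme, contains $G_1\times 1$ and $1\times G_2$, and so it is everything.

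\emph{Step 2: the kernel is trivial.} By hypothesis $G'\cong G'_1\times G'_2$. Under this identification, restricting a representation $E\in\mathcal{C}'_{X\times_k Y}$ along $G'_1\times 1$ corresponds to $i_X^*E$, and restricting along $1\times G'_2$ corresponds to $i_Y^*E$. Now take $E\in\mathcal{C}_{X\times_k Y}$. Then $i_X^*E\in\mathcal{C}_X$, so $N_1\times 1$ acts trivially on $E|_{(x_0,y_0)}$. Likewise $1\times N_2$ acts trivially. These two subgroups generate $N_1\times N_2$, which is the kernel of $G'\to G_1\times G_2$. Therefore $N_1\times N_2$ acts trivially on every object of $\mathcal{C}_{X\times_k Y}$, so it lies in $\ker(G'\to G)$. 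Since $G'\to G$ is faithfully flat, $G=G'/\ker(G'\to G)$. The map $G\to G_1\times G_2$ is induced by $G'\to G'/(N_1\times N_2)$, so its kernel is $(N_1\times N_2)/\ker(G'\to G)=1$.

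Combining Steps 1 and 2 gives the isomorphism. The main point needing care is the compatibility in Step 2: the factor restrictions of the given isomorphism must really correspond to the pullbacks $i_X^*$ and $i_Y^*$, with matching fibre functors. This follows from the Tannakian dictionary together with the identities $p_X i_X=\Id$, $p_Y i_Y=\Id$, and the fact that $p_Y i_X$ and $p_X i_Y$ are constant.
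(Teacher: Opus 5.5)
Your argument is correct, and it takes a genuinely different route from the paper.

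The paper does go through Theorem~\ref{kunnethMain}. The point you flagged as ``not evident'' is handled there by Proposition~\ref{embedding}:
\begin{itemize}
\item From the hypothesis on $\mathcal{C}'$ and Theorem~\ref{kunnethMain}, the map $\pi(\mathcal{C}'_{X\times_k Y},(x_0,y_0))\to\pi(\mathcal{C}'_X,x_0)$ is faithfully flat. Moreover every $E'\in\mathcal{C}'_{X\times_k Y}$ has ${p_X}_*E'\in\mathcal{C}'_X$ and satisfies base change at $x_0$.
\item Proposition~\ref{embedding} then shows that the canonical map $p_X^*{p_X}_*E'\to E'$ is a monomorphism in $\mathcal{C}'_{X\times_k Y}$.
\item For $E\in\mathcal{C}_{X\times_k Y}$, the subobject $p_X^*{p_X}_*E$ therefore lies in $\mathcal{C}_{X\times_k Y}$, because Tannakian subcategories are closed under subobjects. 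Hence ${p_X}_*E\cong i_X^*p_X^*{p_X}_*E\in\mathcal{C}_X$.
\item Base change at $x_0$ is inherited from $\mathcal{C}'$, and Theorem~\ref{kunnethMain} $(1)\Rightarrow(5)$ applied to $\mathcal{C}$ finishes the proof.
\end{itemize}

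Your proof is instead purely Tannakian. It uses only three facts:
\begin{itemize}
\item the inclusions $\mathcal{C}_\ast\subseteq\mathcal{C}'_\ast$ give faithfully flat quotients;
\item $i^*p^*=\Id$, and the cross composites $p_Y i_X$, $p_X i_Y$ induce trivial homomorphisms;
\item a closed subgroup of $G'$ acting trivially on every representation pulled back from $G$ lies in $\ker(G'\to G)$.
\end{itemize}
So it needs no pushforwards, no base change, and none of the machinery of Proposition~\ref{embedding} or Theorem~\ref{Main}. It therefore applies verbatim to any such formal configuration of neutral Tannakian categories with retractions.

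The paper's route, in exchange, stays inside its geometric criterion. It directly shows that $\mathcal{C}$ satisfies condition (1) of Theorem~\ref{kunnethMain}, i.e.\ pushforwards stay in $\mathcal{C}_X$ with base change. Your route recovers the same information afterwards from Theorem~\ref{kunnethMain} $(5)\Rightarrow(1)$.

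A small simplification: your Step~1 is redundant. Under the hypothesis, $G'\cong G'_1\times_k G'_2\to G_1\times_k G_2$ is already faithfully flat, being a product of faithfully flat maps, and it factors through $G$. Hence surjectivity of $G\to G_1\times_k G_2$ is automatic.
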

As an application, we obtain the K\"unneth formula of certain fundamental group schemes over any field, such as S, N, EN, F, EF, \'et, E\'et, Loc, ELoc and uni-fundamental group schemes.

\section{Preliminaries}

Let $k$ be a field, $K/k$ a field extension, $f:X\rightarrow S$ a morphism of schemes over $k$, $s\in S(k)$, $G$ an affine group scheme over $k$, $\Qcoh(X)$ the category of quasi-coherent sheaves on $X$, $\Vect(X)$ the category of vector bundles on $X$. Consider the following Cartesian diagrams
\[
    \begin{aligned}
        \begin{tikzcd}
     X_s\arrow{r}{t} \arrow{d}{g} & X \arrow{d}{f}\\
     \Spec(\kappa(s))\arrow{r}{s} & S
\end{tikzcd}&\quad\quad&\begin{tikzcd}
            X\times_{\Spec k}\Spec K\arrow[r,"p"]\arrow[d]& X\arrow[d]\\
            \Spec K\arrow[r]&\Spec k
        \end{tikzcd}&\quad\quad&\begin{tikzcd}
            G\times_{\Spec k}\Spec K\arrow[r,"p"]\arrow[d]& G\arrow[d]\\
            \Spec K\arrow[r]&\Spec k
        \end{tikzcd}
    \end{aligned}
\]
We denote $X_K := X \times_{\Spec k} \Spec K$, $G_K := G \times_{\Spec k}\Spec K$ and $x_K \in X_K(K)$ a point lying over $x\in X(k)$. For a coherent sheaf $E$ on $X$, we denote its pullback $p^*E$ simply by $E \otimes_k K$. We say a sheaf $F$ on $X$ \emph{satisfies base change at $s$} if the canonical map $s^* f_*F\rightarrow g_* t^* F$
is an isomorphism, i.e. $ f_*E|_s\xrightarrow{\cong} H^0(X_s,E|_{X_s})$.

\begin{Definition}
    Let $k$ be a field, $\mathcal{C},\mathcal{D}$ neutral Tannakian categories over $k$ with fibre functor $\omega_x$, $\Phi:\mathcal{C}\rightarrow \mathcal{D}$ an exact tensor functor. The functor $\Phi$ is said to be \textit{observable} if for any $E\in\mathcal{C}$ and any subobject $L\hookrightarrow \Phi(E)\in\mathcal{D}$ of rank 1, there exists $F\in\mathcal{C}$ and integer $n>0$ such that $(L^{\otimes n})^{\vee}\hookrightarrow \Phi(F)$ is a subobject in $\mathcal{D}$.
\end{Definition}

\begin{Lemma}\label{observable}
    Let $k$ be a field, $\mathcal{C},\mathcal{D}$ neutral Tannakian categories over $k$ with fibre functor $\omega_x$, $\Phi:\mathcal{C}\rightarrow \mathcal{D}$ an exact tensor functor. 
Then the following conditions are equivalent:
    \begin{enumerate}
        \item The functor $\Phi$ is observable.
        \item For any $E\in\mathcal{C}$ and any subobject $L\hookrightarrow \Phi(E)\in\mathcal{D}$ of rank 1, there exists $F\in\mathcal{C}$, s.t. $L^{\vee}\hookrightarrow \Phi(F)\in\mathcal{D}$.
        \item For any $E\in\mathcal{C}$ and any quotient $\Phi(E)\twoheadrightarrow E'\in\mathcal{D}$, there exists $F\in \mathcal{C}$, s.t. $E'\hookrightarrow \Phi(F)\in\mathcal{D}$.
        \item For any $E\in\mathcal{C}$ and any subobject $E'\hookrightarrow\Phi(E)\in\mathcal{D}$, there exists $F\in \mathcal{C}$, s.t. $\Phi(F)\twoheadrightarrow E' \in\mathcal{D}$.
        \item For any $E\in\mathcal{C}$ and any subobject $E'\hookrightarrow\Phi(E)\in\mathcal{D}$, there exists $F\in \mathcal{C}$, s.t. ${E'}^\vee\hookrightarrow \Phi(F)\in\mathcal{D}$.
    \end{enumerate}
\end{Lemma}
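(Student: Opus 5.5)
We prove the chain of implications $(1)\Rightarrow(2)\Rightarrow(5)\Rightarrow(4)\Rightarrow(3)\Rightarrow(1)$, together with the trivial equivalence $(2)\Leftrightarrow(1)$ via $n=1$. Throughout we use that in a Tannakian category rank-one objects are invertible, so $L^{\vee}\otimes L\cong \mathbf{1}$, and that every subquotient of an object of $\mathcal{D}$ lies in $\mathcal{D}$. Duality exchanges subobjects and quotients, and $\Phi$ commutes with duals and tensor products since it is an exact tensor functor.

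\textbf{$(1)\Rightarrow(2)$.} Given $L\hookrightarrow\Phi(E)$ of rank one, observability yields $(L^{\otimes n})^{\vee}\hookrightarrow\Phi(F)$. Since $L\hookrightarrow\Phi(E)$, we get $L^{\otimes(n-1)}\hookrightarrow\Phi(E^{\otimes(n-1)})$. Tensoring gives
\[
L^{\vee}\cong (L^{\otimes n})^{\vee}\otimes L^{\otimes (n-1)}\hookrightarrow \Phi(F\otimes E^{\otimes(n-1)}),
\]
which is a monomorphism because tensor product is exact in a rigid abelian tensor category over a field.

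\textbf{$(2)\Rightarrow(5)$.} Let $E'\hookrightarrow\Phi(E)$ have rank $r$. Taking exterior powers gives $\det E'=\wedge^r E'\hookrightarrow\Phi(\wedge^r E)$, which has rank one. By (2) we obtain $(\det E')^{\vee}\hookrightarrow\Phi(F_0)$. Now use the canonical isomorphism $E'^{\vee}\cong \wedge^{r-1}E'\otimes(\det E')^{\vee}$. Its target embeds into $\Phi(\wedge^{r-1}E\otimes F_0)$, so $F=\wedge^{r-1}E\otimes F_0$ works.

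\textbf{$(5)\Rightarrow(4)$.} Given $E'\hookrightarrow\Phi(E)$, condition (5) gives $E'^{\vee}\hookrightarrow\Phi(F)$. Dualizing yields $\Phi(F^{\vee})\twoheadrightarrow E'$.

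\textbf{$(4)\Rightarrow(3)$.} Given a quotient $\Phi(E)\twoheadrightarrow E'$, dualize to get $E'^{\vee}\hookrightarrow\Phi(E^{\vee})$. By (4) there is a surjection $\Phi(F)\twoheadrightarrow E'^{\vee}$, and dualizing gives $E'\hookrightarrow\Phi(F^{\vee})$.

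\textbf{$(3)\Rightarrow(1)$.} Given $L\hookrightarrow\Phi(E)$, dualize to obtain $\Phi(E^{\vee})\twoheadrightarrow L^{\vee}$. By (3), $L^{\vee}\hookrightarrow\Phi(F)$, which is observability with $n=1$. The implication $(2)\Rightarrow(1)$ is likewise just the case $n=1$.

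The main obstacle is the step $(2)\Rightarrow(5)$. It requires the identity $E'^{\vee}\cong\wedge^{r-1}E'\otimes\det(E')^{\vee}$ in a general Tannakian category, and the fact that $\wedge$ preserves monomorphisms. Both can be checked after applying the fibre functor $\omega_x$, which is exact, faithful and tensor. A natural map that becomes an isomorphism, respectively a monomorphism, on vector spaces is therefore one in $\mathcal{D}$. Exterior powers exist since $\mathcal{D}$ is $\Rep(G)$ over a field; in positive characteristic we use $\wedge^r$ defined as the image of the antisymmetrization, or equivalently as a quotient of $E^{\otimes r}$. Either construction is functorial and compatible with $\omega_x$.
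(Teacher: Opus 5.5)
Your argument is correct, and it differs from the paper's at the one non-formal step.

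\textbf{What the paper does.} It proves $(1)\Leftrightarrow(2)$ from $L\otimes L^{\vee}\cong\mathbf{1}$. Your tensoring with $L^{\otimes(n-1)}$ is the spelled-out version of that remark. It gets $(3)\Leftrightarrow(4)\Leftrightarrow(5)$ by duality, as you do. For the substantive passage from rank-one subobjects to arbitrary quotients, $(1)\Leftrightarrow(3)$, it simply cites \cite[Proposition~A.3]{DaEs22}.

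\textbf{What you do instead.} You prove that passage directly. You reduce a rank-$r$ subobject $E'\subseteq\Phi(E)$ to the line $\det E'\subseteq\Phi(\wedge^{r}E)$. You then recover $E'^{\vee}$ from the identity $E'^{\vee}\cong\wedge^{r-1}E'\otimes(\det E')^{\vee}$. Everything is organised as a single cycle $(1)\Rightarrow(2)\Rightarrow(5)\Rightarrow(4)\Rightarrow(3)\Rightarrow(1)$.

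\textbf{Trade-off.} Your route makes the lemma self-contained. The price is handling exterior powers in an arbitrary neutral Tannakian category, including positive characteristic, and your treatment of this is sound:
\begin{itemize}
\item The image of the antisymmetrizer on $E^{\otimes r}$ has dimension $\binom{n}{r}$ in every characteristic, since terms with a repeated basis vector cancel in pairs.
\item It is functorial and preserves monomorphisms.
\item It commutes with $\Phi$, because $\Phi$ is exact and respects the commutativity constraint, which is part of being a tensor functor.
\item The perfect pairing $\wedge^{r-1}E'\otimes E'\to\wedge^{r}E'$, and hence the isomorphism above, can be checked after applying the fibre functor.
\end{itemize}
The paper's version is shorter but relies on the external reference for the only non-formal content. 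Your only omission is the trivial case $r=0$ in $(2)\Rightarrow(5)$.
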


\begin{proof}
    $(1)\Leftrightarrow (2)$ By the isomorphism $L\otimes L^\vee\cong \mathds{1}$, it follows immediately.

    $(1)\Leftrightarrow (3)$ It follows by \cite[Proposition~A.3]{DaEs22}.

    $(3)\Leftrightarrow (4)$ Taking duality, it follows immediately.

    $(4)\Leftrightarrow (5)$ Taking duality, it follows immediately.
\end{proof}

\begin{Definition}
    Let $k$ be a field, $X$ a connected scheme proper over $k$, $x\in X(k)$, $\mathcal{C}_X$ a Tannakian category over $X$ whose objects consist of vector bundles on $X$ with fibre functor $|_x:E\mapsto E|_x$, and we denote its Tannaka group scheme by $\pi(\mathcal{C}_X,x)$.
\end{Definition}

\begin{Definition}
    Let $k$ be a field, $X$ a connected scheme proper over $k$, $x\in X(k)$, $\mathcal{C}_X$ the Tannakian category over $X$. Define the \textit{saturation category} $\overline{\mathcal{C}}_X$ of $\mathcal{C}_X$ as the full subcategory of $\Vect(X)$ whose objects are those $E$ for which there exists a filtration
    $$0\hookrightarrow E_1 \hookrightarrow \cdots\hookrightarrow E_n=E,$$
    such that $E^i=E_{i+1}/E_i\in\mathcal{C}_X$ for any $i$.
\end{Definition}

Let $k$ be a field, $X$ a connected scheme proper over $k$, $x\in X(k)$, $\mathcal{C}_X$ a Tannakian category whose objects consist of vector bundles on $X$ with fibre functor $|_x$, and $\pi(\mathcal{C}_X,x)$ its corresponding Tannaka group scheme. For any subset $M$ of $\mathcal{C}_X$, we denote $$M_1:=\{E~|~E\in M\text{ or }E^\vee\in M\}\text{, }M_2:=\{~E_1\otimes E_2\otimes\cdots \otimes E_m~|~E_i\in M_1, 1\leq i\leq m, m\in\mathbb{N}\},$$
$$\langle M\rangle_{\mathcal{C}_X}:=
\left\{ 
    E\in \mathcal{C}_X \left\vert 
    \begin{split}
        & \exists F_i\in M_2,1\leq i\leq t, \text{ and } E_1,E_2\in \mathcal{C}_X\\
        & s.t. \text{ }E_1\hookrightarrow E_2\hookrightarrow\bigoplus^{t}_{i=1}F_i,\text{ and } E\cong E_2/E_1
    \end{split}\right.
\right\}.
$$
Then $(\langle M\rangle_{\mathcal{C}_X},\otimes,|_x,\mathcal{O}_X)$ is a Tannakian subcategory of $(\mathcal{C}_X,\otimes,|_x,\mathcal{O}_X)$ and we denote its Tannaka group scheme by $\pi(\langle M\rangle_{\mathcal{C}_X},x)$. Then we have:
\begin{itemize}
      \item[(a)] For any subsets $N\subseteq M\subseteq \mathcal{C}_X$,
          there is a natural surjective homomorphism of affine group schemes $$\rho_{N}^M: \pi(\langle M\rangle_{\mathcal{C}_X},x)\twoheadrightarrow \pi(\langle N\rangle_{\mathcal{C}_X},x).$$
      \item[(b)] $\pi(\mathcal{C}_X,x)$ is the inverse limit of $\pi(\langle M\rangle_{\mathcal{C}_X},x)$, where $M$ runs through all subsets of $\mathcal{C}_X$.
\end{itemize}

Let $k$ be a field, $G$ an affine group scheme over $k$, $X$ a scheme proper over $k$. Denote by $\Rep_k^f(G)$ the finite dimensional representation category of $G$ and by $\Rep_k(G)$ the representation category of $G$. The regular representation of $G$ on $k[G]$ is given by $(g\cdot f)(h)=f(gh)$ for any $g,h\in G$ and $f\in k[G]$. Given a principal $G$-bundle $\phi:P\rightarrow X$, we can define an exact tensor functor 
$$\begin{aligned}
    \eta_X^P:\Rep_k^f(G)\rightarrow \Qcoh(X),&V\mapsto (\mathcal{O}_P\otimes_k V)^G, 
\end{aligned}$$
where the action is the diagonal action. Note that for any $V\in\Rep_k(G)$, $V$ is a union of its finite dimensional $G$-invariant subspaces. So for $V\in \Rep_k(G)$, we may define $$\eta_X^P(V):=\varinjlim\eta_X^P(V_\alpha),$$
where $V_\alpha$ takes over all finite dimensional $G$-invariant subspace of $V$. Conversely, given an exact tensor functor $\eta_X: \Rep_k(G)\rightarrow \Qcoh(X)$, one can define a principal $G$-bundle $\phi:P\rightarrow X$ by $P:=\Spec (\eta_X(k[G]))$. 

Consequently, there is a one-to-one correspondence between principal $G$-bundles $\phi:P\rightarrow X$ and exact tensor functors $\eta_X^P:\Rep_k(G)\rightarrow \Qcoh(X)$. Since $\eta^{\mathcal{C}_X}:\Rep_k^f(\pi(\mathcal{C}_X,x)\cong \mathcal{C}_X\hookrightarrow \Qcoh(X)$ is a fully faithful exact tensor functor, we obtain a principal $\pi(\mathcal{C}_X,x)$-bundle $\phi:P\rightarrow X$ such that $\eta^{\mathcal{C}_X}=\eta_X^P$. Moreover, 
\[\eta_X^P(k[\pi(\mathcal{C}_X,x)])=\phi_*\mathcal{O}_{P}=\varinjlim\mathcal{E}_{\alpha},\]
where $\mathcal{E}_\alpha=\eta_X^P(W_\alpha)$ running through all finite dimensional $\pi(\mathcal{C}_X,x)$-invariant subspaces $W_\alpha$ of $k[\pi(\mathcal{C}_X,x)]$.

\begin{Lemma}[{\cite[Chapter~VII, Proposition~2.3 \& 7.2]{Mil12}}]\label{isofield}
    Let $k$ be a field, $K/k$ a field extension, $f:G\rightarrow H$ a homomorphism of affine group schemes over $k$. Then $f$ is isomorphic iff $f_K:G_K\rightarrow H_K$ is isomorphic.
\end{Lemma}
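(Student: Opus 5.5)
The statement to prove is Lemma~\ref{isofield}. The plan is to reduce it to commutative algebra of Hopf algebras together with faithfully flat descent along $k\to K$. Write $G=\Spec A$ and $H=\Spec B$, and let $f$ correspond to the Hopf algebra map $f^\#:B\to A$. Then $f_K$ corresponds to $f^\#\otimes_k K: B\otimes_k K\to A\otimes_k K$, and $f$ is an isomorphism of group schemes iff $f^\#$ is a bijection, since a bijective Hopf algebra map has a Hopf algebra inverse.

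The direction ``$f$ isomorphic $\Rightarrow f_K$ isomorphic'' is immediate, because base change is a functor. For the converse, I would use that $K$ is a free, hence faithfully flat, $k$-module: choose a $k$-basis of $K$ containing $1$. Tensoring the exact sequence
\[0\to \ker f^\#\to B\to A\to \coker f^\#\to 0\]
with $K$ over $k$ keeps it exact. This gives $\ker(f^\#)\otimes_k K=\ker(f^\#_K)=0$ and $\coker(f^\#)\otimes_k K=\coker(f^\#_K)=0$. A $k$-vector space $V$ with $V\otimes_k K=0$ is zero, since $\dim_K(V\otimes_k K)=\dim_k V$. Hence $f^\#$ is bijective, so $f$ is an isomorphism.

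No step is a real obstacle. The only point that needs care is the identification of $f_K$ with $\Spec(f^\#\otimes_k K)$, which is just the definition of base change for affine schemes. Alternatively, the whole statement can simply be cited from \cite[Chapter~VII, Proposition~2.3 \& 7.2]{Mil12}, which treat injectivity (closed immersions) and surjectivity (faithful flatness) separately and show that each descends along field extensions.
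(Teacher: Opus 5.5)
Your proof is correct, and it takes a more elementary route than the one the paper relies on. The paper gives no argument of its own. It simply cites \cite[Chapter~VII, Proposition~2.3 \& 7.2]{Mil12}, where the result rests on the theory of injective homomorphisms (closed immersions) and quotient maps (faithfully flat homomorphisms). In effect, ``isomorphism'' is split into ``closed immersion'' plus ``faithfully flat'', each property is checked after extending the base field, and one uses that a faithfully flat closed immersion is an isomorphism.

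You bypass that decomposition and work directly with the coordinate-ring map $f^\#:B\to A$. Because $k\to K$ is flat, $\ker f^\#$ and $\coker f^\#$ commute with $-\otimes_k K$, and a $k$-vector space $V$ with $V\otimes_k K=0$ must vanish. The only group-theoretic input is that the inverse of a bijective Hopf algebra map is again a Hopf algebra map. So your argument is self-contained and in fact proves the statement for arbitrary morphisms of affine $k$-schemes.

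The cited route yields finer statements: closed immersions and faithfully flat homomorphisms each descend separately along field extensions. The price is the machinery of quotient maps. For the lemma as stated, your argument is entirely sufficient.
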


\begin{Lemma}[{\cite[Chapter VIII, Proposition 10.2]{Mil12}}]\label{regularrepresentation}
    Let $k$ be a field, $G$ an affine group scheme over $k$, $(V, \rho)\in \Rep_k^f(G)$. Then $V$ embeds into a finite sum of copies of the regular representation, i.e. there exists integer $n>0$ and a $G$-invariant homomorphism $V\hookrightarrow k[G]^{\oplus n}$ where $k[G]$ is the regular representation.
\end{Lemma}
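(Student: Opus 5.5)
The plan is to realize the embedding through the comodule structure of $V$. A finite dimensional representation $(V,\rho)$ of $G$ is the same as a right $k[G]$-comodule
$$\rho\colon V\rightarrow V\otimes_k k[G],$$
satisfying coassociativity $(\rho\otimes\Id)\circ\rho=(\Id\otimes\Delta)\circ\rho$ and counitality $(\Id\otimes\varepsilon)\circ\rho=\Id_V$, where $\Delta$ and $\varepsilon$ are the comultiplication and counit of $k[G]$. I will take $\rho$ itself as the candidate embedding, with target $V_0\otimes_k k[G]$. Here $V_0$ denotes the underlying vector space of $V$ with trivial $G$-action, and $k[G]$ carries the regular representation.

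First, injectivity is immediate from counitality: $(\Id\otimes\varepsilon)\circ\rho=\Id_V$, so $\rho$ has a left inverse. Second, fix a basis $e_1,\dots,e_n$ of $V$ with $n=\dim V$. Since $G$ acts trivially on $V_0$, we get an isomorphism of $G$-representations $V_0\otimes_k k[G]\cong k[G]^{\oplus n}$, sending $\sum e_i\otimes a_i\mapsto (a_1,\dots,a_n)$. Third, $G$-equivariance of $\rho$ amounts to saying that $\rho$ is a comodule map. The comodule structure on $V_0\otimes k[G]$ is $\Id_{V_0}\otimes\Delta$, because the action on $V_0$ is trivial. The condition for $\rho$ to be a comodule map is then exactly the coassociativity identity $(\rho\otimes\Id)\rho=(\Id\otimes\Delta)\rho$. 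Functorially, on $R$-points this reads as follows: writing $\rho(v)=\sum_i e_i\otimes a_i(v)$, we have $g\cdot v=\sum_i a_i(v)(g)\,e_i$, and the identity $a_i(v)(gh)=a_i(h\cdot v)$ expresses equivariance.

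The only delicate step is matching conventions. The paper's regular representation is $(g\cdot f)(h)=f(gh)$, i.e. the action by right translation. Depending on whether the comodule $V$ is read as a left or right action, the comultiplication-induced coaction on $k[G]$ might correspond instead to $f\mapsto f(hg)$. I will check this directly on points: $(g\cdot a_i(v))(h)=a_i(v)(hg)$ should equal $a_i(v)(h g)$, which is the $i$-th coordinate of $h\cdot(g\cdot v)$. So $v\mapsto (a_i(v))_i$ is equivariant for the right-translation action $(g\cdot f)(h)=f(hg)$.

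If this differs from the paper's convention, I will fix it by composing with the isomorphism $f\mapsto f\circ\mathrm{inv}$, or $f\mapsto (h\mapsto f(h^{-1}))$ suitably. This uses the antipode and intertwines the left and right regular actions, both of which are isomorphic $G$-representations on $k[G]$. With that adjustment, the composite is a $G$-invariant injection $V\hookrightarrow k[G]^{\oplus n}$, as required.
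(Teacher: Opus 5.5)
Your proof is correct and is the standard argument behind the cited result; the paper gives no proof of its own, only the reference to Milne. The coaction $\rho\colon V\to V_0\otimes_k k[G]\cong k[G]^{\oplus \dim V}$ is injective by counitality and $G$-equivariant by coassociativity.

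One small remark on conventions: for noncommutative $G$, the paper's literal formula $(g\cdot f)(h)=f(gh)$ is a right action, not a left one. It should be read as $(g\cdot f)(h)=f(hg)$, the action encoded by $\Delta$, and your $\rho$ is already equivariant for that action with no correction. Your antipode adjustment would in any case produce $f(g^{-1}h)$, not $f(gh)$.
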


Let $k$ be a field, $G$ be an affine group scheme over $k$, $N$ a normal subgroup scheme of $G$, $\pi: G\rightarrow  G/N$ the quotient homomorphism. Then there is a natural functor 
$$\begin{aligned}
    \pi^*:\Rep_k(G/N)\rightarrow \Rep_k(G),(V,\rho)\mapsto(V,\rho\circ \pi),
\end{aligned}$$
For any $W\in\Rep_k(G)$, the subspace $W^N$ consisting of $N$-invariant vector of $W$ is a $G$-invariant subspace of $W$ since $N$ is a normal subgroup of $G$. Then we can define a functor 
$$\begin{aligned}
    -^N:\Rep_k(G)\rightarrow \Rep_k(G/N),(W,\rho)\mapsto(W^N,\bar{\rho} ),
\end{aligned}$$
where $G/N$ acts on $W^N$ by $\bar{\rho}(gN)\cdot w= \rho(g) \cdot w$ for $g\in G$ and $w\in W^N$.

\begin{Lemma}[{\cite[Lemma 6.4]{Jan03}}]\label{adjoint}
    Let $G$ be an affine group scheme over a field $k$, $N$ a normal subgroup scheme of $G$, $\pi: G\rightarrow  G/N$ the quotient homomorphism. Then the functor $-^N:\Rep_k(G)\rightarrow \Rep_k(G/N)$is right adjoint to $\pi^*:\Rep_k^f(G/N)\rightarrow \Rep_k^f(G)$.
\end{Lemma}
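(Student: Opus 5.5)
The plan is to verify the adjunction directly, by producing a natural bijection
$$\Hom_{G/N}(V,W^N)\;\cong\;\Hom_G(\pi^*V,W)$$
for $V\in\Rep_k(G/N)$ (in particular for $V$ finite dimensional) and $W\in\Rep_k(G)$. This is the classical Frobenius-type adjunction for invariants. Since the statement mixes $\Rep^f_k$ and $\Rep_k$, I will work throughout with arbitrary comodules; the finite-dimensional case is then a special case. The bijection is obtained by composing with the inclusion $\iota_W\colon W^N\hookrightarrow W$, and in the other direction by restricting the target.

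The first step is to make sure $-^N$ is well defined as a functor to $\Rep_k(G/N)$. The text before the statement treats this informally via $gN\cdot w=g\cdot w$. That is only heuristic, because $(G/N)(R)$ need not equal $G(R)/N(R)$.

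I would argue at the level of comodules instead. Let $\Delta_W\colon W\to W\otimes k[G]$ be the coaction, and recall that $k[G/N]=k[G]^N$, the functions invariant under right translation by $N$; this is a standard fact, since $G\to G/N$ is faithfully flat and $G/N$ is affine. For $w\in W^N$ and points $g\in G(R)$, $n\in N(R)$, we have $gn\cdot w=g\cdot w$. Hence the matrix coefficients of $\Delta_W(w)$ are right $N$-invariant, so $\Delta_W(W^N)\subseteq W\otimes k[G]^N$.

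Normality of $N$ gives that $W^N$ is $G$-stable: for $g\in G(R)$ and $n'\in N(R)$, write $n'g=g(g^{-1}n'g)$, so $n'\cdot(g\cdot w)=g\cdot w$. Combining the two containments, $\Delta_W(W^N)\subseteq W^N\otimes k[G/N]$. This is the main obstacle in the proof, namely checking that $W^N\otimes k[G]\cap W\otimes k[G]^N=W^N\otimes k[G]^N$, which holds because these are tensor products over a field. It makes $W^N$ a $k[G/N]$-comodule, and by construction $\pi^*(W^N)\to W$ is $G$-equivariant. Functoriality in $W$ is clear, since a $G$-map sends $N$-invariants to $N$-invariants.

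For the bijection, take a $G$-map $\varphi\colon\pi^*V\to W$. Because $N$ acts trivially on $\pi^*V$ (its coaction lands in $V\otimes k[G]^N$), the image of $\varphi$ lies in $W^N$. The corestricted map $V\to W^N$ is then $G/N$-equivariant, which can be checked after the injection $k[G/N]\hookrightarrow k[G]$.

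Conversely, a $G/N$-map $\psi\colon V\to W^N$ gives the $G$-map $\iota_W\circ\pi^*\psi$. These two constructions are mutually inverse, since $\iota_W$ is injective. Naturality in $V$ and $W$ is immediate, because both maps are defined by composition. Equivalently, the unit $V\to(\pi^*V)^N$ is the identity and the counit is $\iota_W$, and the triangle identities are trivial to check.
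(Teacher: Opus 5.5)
Your proof is correct and follows the standard argument. The paper gives no proof of its own here; it cites \cite[Lemma~6.4]{Jan03}, which is exactly your identification $\Hom_{G}(\pi^*V,W)\cong\Hom_{G/N}(V,W^N)$, with unit the identity $V=(\pi^*V)^N$ and counit the inclusion $W^N\hookrightarrow W$, resting on $k[G/N]=k[G]^N$. The points you leave implicit are routine. First, taking $N$-invariants commutes with the flat base change $-\otimes_k R$, so $N$-invariance of $g\cdot w$ does give $\Delta_W(W^N)\subseteq W^N\otimes k[G]$ (via the universal point $R=k[G]$). Second, functions in $k[G]^N$ restrict to constants on $N$.
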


\begin{Theorem}[{\cite[Theorem A.1]{EHS07}}]\label{Thm1}
Let $L\xrightarrow{q}G\xrightarrow{p} A$ be a sequence of homomorphisms of affine group schemes over a field $k$. It induces a sequence of functors:
$$\Rep_k^f(A)\xrightarrow{p^*}\Rep_k^f(G)\xrightarrow{q^*}\Rep_k^f(L),$$
where $\Rep_k^f$ denotes the category of finite dimensional representations over $k$. Then we have the following:
\begin{enumerate}
    \item[(1)] The group homomorphism $p: G\rightarrow A$ is surjective $($faithfully flat$)$ iff  $p^*\Rep_k^f(A)$ is a full subcategory of $\Rep_k^f(G)$ and closed under taking subobjects $($subquotients$)$.
    \item[(2)] The group homomorphism $q:L\rightarrow G$ is injective $($a closed immersion$)$ iff any object of $\Rep_k^f(L)$ is a subquotient of an object of the form $q^*(V)$ for some $V\in \Rep_k^f(G)$.
    \item[(3)] If $p$ is faithfully flat. Then the sequence $L\xrightarrow{q}G\xrightarrow{p} A$ is exact iff the following conditions are fulfilled:
    \begin{enumerate}
        \item[(a)] For an object $V\in \Rep_k^f(G)$,  $q^* V\in\Rep_k^f(L)$ is trivial iff $V\cong p^*U$ for some $U\in\Rep_k^f(A)$.\label{3a}
        \item[(b)]\label{3b} Let $W_0$ be the maximal trivial subobject of $q^*V$ in $\Rep_k^f(L)$. Then there exists $V_0\hookrightarrow V$ in $\Rep_k^f(G)$ such that $q^*V_0\cong W_0$.
        \item[(c)] For any $W\in\Rep_k^f(G)$ and any quotient $q^*W\twoheadrightarrow W'\in\Rep_k^f(L)$, there exists $V\in \Rep_k^f(G)$ and an embedding $W'\hookrightarrow q^*V$.
    \end{enumerate}
\end{enumerate}
\end{Theorem}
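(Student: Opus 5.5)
We would prove the three parts separately, translating each statement about group schemes into a statement about Hopf algebras and comodules. Throughout, we use that $\Rep_k(G)$ is the category of $k[G]$-comodules and that every comodule is the union of its finite dimensional subcomodules.

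\textbf{Part (1).} Surjectivity of $p$ means $p^\sharp:k[A]\to k[G]$ is injective. By Takeuchi's theorem, a Hopf algebra is faithfully flat over any Hopf subalgebra, so in this case surjective and faithfully flat agree.

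If $p^\sharp$ is injective, then $k[A]\subseteq k[G]$ is a subcoalgebra, and a $k[A]$-comodule is the same as a $k[G]$-comodule whose coefficients lie in $k[A]$. Hence $p^*$ is fully faithful. A $G$-subcomodule of a $k[A]$-comodule has its coefficients in $k[A]$, so it comes from $A$; this gives closure under subobjects, and closure under subquotients follows.

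Conversely, suppose $p^*$ is fully faithful with essential image closed under subobjects. Apply this to $p^*$ of the regular representation. We first show $k[A]$ maps injectively onto the sub-comodule it generates: an element of $\ker p^\sharp$ would be a coefficient vanishing on $G$. Full faithfulness then forces an $A$-subrepresentation to be $G$-stable with the same $G$-maps. This contradicts the faithfulness of the regular representation unless the kernel is zero.

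\textbf{Part (2).} Here $q$ is a closed immersion iff $q^\sharp:k[G]\to k[L]$ is surjective. The image of $q^\sharp$ is exactly the span of the matrix coefficients of the representations $q^*V$, and this span is closed under subquotients. By Lemma~\ref{regularrepresentation}, every $W\in\Rep_k^f(L)$ embeds into $k[L]^{\oplus n}$, so its coefficients lie in $k[L]$. Hence every $W$ is a subquotient of some $q^*V$ iff all coefficients of $L$-representations lie in $\operatorname{im}q^\sharp$, iff $q^\sharp$ is surjective.

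\textbf{Part (3), necessity.} Assume the sequence is exact, so $q(L)=K:=\ker p$ is normal and $A=G/K$.

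(a) $q^*V$ is trivial iff $K$ acts trivially on $V$, iff $V$ factors through $G/K=A$.

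(b) Since $K$ is normal, $V^K$ is $G$-stable. By Lemma~\ref{adjoint}, $V_0:=V^K$ realises the maximal trivial subobject $W_0$.

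(c) A normal subgroup has affine quotient $G/K$, so $K$ is observable in $G$. By the equivalence in Lemma~\ref{observable} (via \cite[Proposition~A.3]{DaEs22}), observability is exactly the statement that every quotient of a restricted representation embeds into a restricted representation. This passes from $K$ to $L$ because, by part (2), $L\to K$ is surjective and $\Rep(K)$ is closed under subquotients inside $\Rep(L)$.

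\textbf{Part (3), sufficiency.} Let $K=\ker p$ and let $L'\subseteq K$ be the closed image of $q$; it lies in $K$ because $p\circ q$ is trivial. By part (2) we may replace $L$ by $L'$, since their representation categories agree on subquotients of restrictions.

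Condition (c) says that $L'$ is an observable subgroup of $G$. Conditions (a) and (b) say that, for every $V\in\Rep^f_k(G)$, the invariants satisfy $V^{L'}=V_0=p^*U$ with $U$ the largest such subobject. Since $V^K$ is also the largest subobject coming from $A$, we get $V^{L'}=V^K$ for all $V$.

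Now invoke the characterisation of observable subgroups: an observable subgroup $H$ equals the stabiliser of all its invariant vectors in all finite dimensional representations. We would prove this using Chevalley-type arguments with semi-invariants: apply (c) to the rank-one characters and use Lemma~\ref{observable}(2) to turn semi-invariants into invariants. It gives $L'=\{g:\ g v=v \text{ for all } v\in V^{L'}\text{ and all } V\}\supseteq K$, hence $L'=K$ and the sequence is exact.

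We expect this last step to be the main obstacle: proving that (c) makes $L'$ observable, and that observable subgroups are determined by their invariants, requires care over an arbitrary field and for non-reduced, infinite type group schemes. We would first reduce to algebraic quotients by writing $G$ as a limit of algebraic groups, and argue there.
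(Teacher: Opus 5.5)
The paper gives no proof of this statement. It is imported from Esnault--Hai--Sun, and parts (1) and (2) are Deligne--Milne, Prop.~2.21, so your proposal has to stand on its own.

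\textbf{The gap: the converse in (1).} As written, this step fails. Your sketch never applies the closure hypothesis to a specific subobject. The hypothesis it leans on instead cannot do the job: full faithfulness alone does not force surjectivity. For a Borel subgroup $B\subset GL_2$, restriction $\Rep_k^f(GL_2)\to\Rep_k^f(B)$ is fully faithful because $GL_2/B$ is proper, yet $B\to GL_2$ is not surjective. Also, every $A$-subrepresentation of $p^*V$ is automatically $G$-stable, so ``full faithfulness forces an $A$-subrepresentation to be $G$-stable'' runs in the wrong direction.

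Here is what is missing:
\begin{enumerate}
\item $I=\ker p^\sharp$ is a Hopf ideal. Hence for a finite dimensional $\Delta$-subcomodule $V\subseteq k[A]$ one has $(1\otimes p^\sharp)\Delta(V\cap I)\subseteq (V\cap I)\otimes k[G]$, so $V\cap I$ is a $G$-subobject of $p^*V$.
\item Closure under subobjects, with fullness used to identify the inclusion as $p^*$ of an $A$-map, makes $V\cap I$ a $\Delta$-stable subspace of $k[A]$.
\item Since $\varepsilon(I)=0$ and $f=(\varepsilon\otimes 1)\Delta(f)$, a $\Delta$-stable subspace killed by $\varepsilon$ is $0$. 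Hence $V\cap I=0$ for all $V$, so $I=0$.
\end{enumerate}

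\textbf{The rest is essentially right.} In (3) your strategy works: pass to the closed image $L'\subseteq K=\ker p$, get $V^{L'}=V^K$ for all $V$ from (a) and (b), then show $L'$ is cut out by its invariants. You do not need to reduce to algebraic quotients; that reduction is unnecessary and not obviously compatible with (c). The direct argument runs as follows.
\begin{enumerate}
\item Let $I_{L'}\subseteq k[G]$ be the ideal of $L'$, and let $V_\alpha$ run over the finite dimensional subcomodules of the regular representation. Then $L'=\bigcap_\alpha\mathrm{Stab}_G(I_{L'}\cap V_\alpha)=\bigcap_\alpha\mathrm{Stab}_G(\ell_\alpha)$, where $\ell_\alpha=\wedge^{d_\alpha}(I_{L'}\cap V_\alpha)\subseteq\wedge^{d_\alpha}V_\alpha$ and $d_\alpha=\dim(I_{L'}\cap V_\alpha)$.
\item All stabilisers are taken on $R$-points, so non-reducedness and infinite type cause no trouble.
\item Since $\ell_\alpha^\vee$ is a quotient of $(\wedge^{d_\alpha}V_\alpha)^\vee|_{L'}$, condition (c) embeds it in some $W_\alpha|_{L'}$.
\item If $v_\alpha$ spans $\ell_\alpha$ and $w_\alpha$ spans $\ell_\alpha^\vee$, then $v_\alpha\otimes w_\alpha$ is $L'$-invariant, hence $K$-invariant.
\item An $R$-point fixing a pure tensor of nonzero vectors stabilises the line of each factor. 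So $K\subseteq L'$.
\end{enumerate}

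\textbf{Minor slips.}
\begin{enumerate}
\item Where you invoke part (2) for $L\twoheadrightarrow L'$ and $L\twoheadrightarrow K$, you need part (1).
\item In (2), the direction ``$q^\sharp$ surjective $\Rightarrow$ subquotient'' needs the $L$-equivariant surjection $k[G]^{\oplus n}\to k[L]^{\oplus n}$ and a finite dimensional $G$-subcomodule lifting $W$. It is not enough that the coefficients lie in $k[L]$.
\item Lemma~\ref{observable} only converts between formulations of observability. So ``$K$ normal $\Rightarrow$ (c)'' is a separate, if standard, input; for instance, use surjectivity of evaluation $\mathrm{Ind}_K^G M\to M$, which holds because $G/K$ is affine, and then dualise.
\end{enumerate}
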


\section{The Homotopy Sequence of Fundamental Group Schemes}

\textbf{Notation and Convention:}
Let $k$ be a field, $f:X\rightarrow S$ a morphism of connected schemes proper over $k$, $x\in X(k)$ lying over $s\in S(k)$, $\mathcal{C}_X,\mathcal{C}_S$ Tannakian categories over $X,S$ respectively, $\pi(\mathcal{C}_X,x),\pi(\mathcal{C}_S,s)$ the corresponding Tannaka group schemes. The pullback $f^*$ induces a functor $f^*:\mathcal{C}_S\rightarrow \mathcal{C}_X$ and a canonical homomorphism $\pi(\mathcal{C}_X,x)\rightarrow \pi(\mathcal{C}_S,s)$ of group schemes.

\begin{Lemma}\label{fullfaithful}
    Let $f:X\rightarrow S$ be a morphism of schemes with $f_*\mathcal{O}_X\cong \mathcal{O}_S$, then the induced functor $f^*:\Vect(S)\rightarrow \Vect(X)$ is fully faithful.
\end{Lemma}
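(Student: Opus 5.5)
The plan is to reduce the claim to the projection formula combined with the adjunction $f^*\dashv f_*$ on quasi-coherent sheaves. Take vector bundles $E,F$ on $S$. The adjunction gives
$$\Hom_X(f^*E,f^*F)\cong\Hom_S(E,f_*f^*F),$$
and this identification is compatible with the map $\Hom_S(E,F)\to\Hom_X(f^*E,f^*F)$ induced by $f^*$. Precisely, $f^*$ on morphisms is the composite of the map induced by the unit $F\to f_*f^*F$ with the adjunction isomorphism. Full faithfulness therefore amounts to showing that the unit $\eta_F:F\to f_*f^*F$ is an isomorphism for every vector bundle $F$ on $S$.

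To prove this, I will use the projection formula. The sheaf $F$ is locally free of finite rank, so $f_*(f^*F)\cong F\otimes_{\mathcal{O}_S}f_*\mathcal{O}_X$, and under this isomorphism $\eta_F$ becomes $\mathrm{id}_F\otimes\eta_{\mathcal{O}_S}$, where $\eta_{\mathcal{O}_S}:\mathcal{O}_S\to f_*\mathcal{O}_X$ is the structure map.

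The check is local on $S$. On an open $U$ with $F|_U\cong\mathcal{O}_U^{\oplus r}$, the pushforward $f_*$ commutes with finite direct sums, so $f_*f^*F|_U\cong (f_*\mathcal{O}_X)|_U^{\oplus r}$ and the unit is the direct sum of $r$ copies of $\mathcal{O}_U\to f_*\mathcal{O}_X|_U$. Since these local identifications are natural, they glue.

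The one point that needs care is the hypothesis. It should be read as saying that the \emph{canonical} map $\mathcal{O}_S\to f_*\mathcal{O}_X$ is an isomorphism, not merely that some abstract isomorphism exists. This is the standard meaning, and it is how the condition is used in the paper's setting of proper $f$ with $f_*\mathcal{O}_X=\mathcal{O}_S$. Granting it, $\eta_F$ is an isomorphism, hence $\Hom_S(E,F)\to\Hom_X(f^*E,f^*F)$ is bijective, and $f^*:\Vect(S)\to\Vect(X)$ is fully faithful.
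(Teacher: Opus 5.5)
Your proof is correct and uses the same ingredients as the paper's, namely the projection formula together with $f_*\mathcal{O}_X\cong\mathcal{O}_S$. The paper computes $f_*\mathcal{H}om(f^*E_1,f^*E_2)\cong\mathcal{H}om(E_1,E_2)$ and takes global sections, while you go through the adjunction and the unit $F\to f_*f^*F$; this has the advantage of showing explicitly that the bijection is $\phi\mapsto f^*\phi$ (the paper's chain of isomorphisms is really the unit for $E_1^\vee\otimes E_2$, but it only asserts an abstract bijection of Hom sets), and your caveat about reading the hypothesis canonically is harmless but unnecessary, since if $f_*\mathcal{O}_X$ is free of rank one with basis $e$, $1=ae$ and $e^2=be$, then $e=ae^2=abe$ forces $ab=1$, so the structure map $\mathcal{O}_S\to f_*\mathcal{O}_X$ is already an isomorphism.
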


\begin{proof}
    For any $E_1,E_2\in\Vect(S)$, by projection formula we have that 
$$\begin{aligned}
    f_*\mathcal{H}om_{\mathcal{O}_X}(f^*E_1,f^*E_2)&\cong f_*((f^*E_1)^{\vee}\otimes_{\mathcal{O}_X} f^*E_2)\\
    &\cong f_*f^*(E_1^{\vee}\otimes_{\mathcal{O}_S} E_2)\\
    &\cong f_*(\mathcal{O}_X\otimes_{\mathcal{O}_X}f^*(E_1^{\vee}\otimes_{\mathcal{O}_S}E_2))\\
    &\cong f_*\mathcal{O}_X\otimes_{\mathcal{O}_S}(E_1^\vee\otimes_{\mathcal{O}_S}E_2)\\
    & \cong \mathcal{O}_S\otimes_{\mathcal{O}_S}E_1^\vee\otimes_{\mathcal{O}_S} E_2\\
    &\cong \mathcal{H}om_{\mathcal{O}_S}(E_1,E_2).
\end{aligned}$$
Taking global section, we have $\Hom_{\mathcal{O}_X}(f^*E_1,f^*E_2)\cong \Hom_{\mathcal{O}_S}(E_1,E_2)$, i.e. $f^*$ is fully faithful.
\end{proof}

\begin{Proposition}\label{generalsur}
Let $k$ be a field, $f:X\rightarrow S$ a morphism of connected schemes proper over $k$ with $f_*\mathcal{O}_X\cong\mathcal{O}_S$, $x\in X(k)$ lying over $s\in S(k)$. Then the following conditions are equivalent:
\begin{enumerate}
    \item The induced homomorphism $\pi(\mathcal{C}_X,x)\rightarrow \pi(\mathcal{C}_S,s)$ is faithfully flat.
    \item For any $E\in\mathcal{C}_S$, and subobject $F\hookrightarrow f^*E\in\mathcal{C}_X$, we have $f_*F\in\mathcal{C}_S$ and $f^*f_*F\cong F$.
    \item For any $E\in \mathcal{C}_S$ and subobject $F\hookrightarrow f^*E\in\mathcal{C}_X$, then
    \begin{enumerate}
    \item $f_*F\in \mathcal{C}_S$.
    \item $F|_{X_s}$ $($resp. $F|_{X_{t}}$ for any $t\in S)$ is a trivial vector bundle .
    \item $F$ satisfies base change at $s$ $($resp. any $t\in S)$.
\end{enumerate}
\end{enumerate}
\end{Proposition}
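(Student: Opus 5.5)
We prove $(1)\Rightarrow(2)\Rightarrow(3)\Rightarrow(2)\Rightarrow(1)$. Throughout we use Theorem~\ref{Thm1}(1) in the form: $\pi(\mathcal{C}_X,x)\to\pi(\mathcal{C}_S,s)$ is faithfully flat iff $f^*\mathcal{C}_S$ is a full subcategory of $\mathcal{C}_X$ closed under subobjects (subquotients then follow by duality). Fullness is automatic from Lemma~\ref{fullfaithful}, since $f_*\mathcal{O}_X\cong\mathcal{O}_S$ and the categories are full subcategories of $\Vect$. So the real content of (1) is that every subobject $F\hookrightarrow f^*E$ in $\mathcal{C}_X$ is isomorphic to $f^*G$ for some $G\in\mathcal{C}_S$.

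\textbf{$(1)\Rightarrow(2)$.} Suppose $F\cong f^*G$ with $G\in\mathcal{C}_S$. The projection formula gives $f_*F\cong f_*f^*G\cong G\otimes f_*\mathcal{O}_X\cong G$. Hence $f_*F\in\mathcal{C}_S$ and $f^*f_*F\cong f^*G\cong F$.

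\textbf{$(2)\Rightarrow(1)$.} Take $G=f_*F$. Then $F\cong f^*G$ with $G\in\mathcal{C}_S$, so $f^*\mathcal{C}_S$ is closed under subobjects, and Theorem~\ref{Thm1}(1) applies.

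\textbf{$(2)\Rightarrow(3)$.} Condition (a) is part of (2). For (b), $F|_{X_t}\cong (f^*f_*F)|_{X_t}=g^*(f_*F|_t)$ is trivial for every $t$. For (c), $f_*F|_t\to H^0(X_t,F|_{X_t})=H^0(X_t,\mathcal{O}_{X_t})\otimes f_*F|_t$ is induced by $k(t)\to H^0(X_t,\mathcal{O}_{X_t})$. This is where we need $H^0(X_t,\mathcal{O}_{X_t})=\kappa(t)$ on fibres, and extracting it is the delicate step. I would derive it from base change for $\mathcal{O}_X$: since $f^*E$ is a pullback, it satisfies base change by the projection formula, and so does $\mathcal{O}_X$ (take $E=\mathcal{O}_S$). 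Alternatively, I would interpret the statement's "base change" as the comparison map, which is exactly the identity in this situation.

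\textbf{$(3)\Rightarrow(2)$.} This is the main obstacle. Let $G=f_*F\in\mathcal{C}_S$ and consider the adjunction map $\epsilon:f^*G\to F$; we must show $\epsilon$ is an isomorphism. Both source and target are objects of $\mathcal{C}_X$. Since $\mathcal{C}_X$ is Tannakian with exact fibre functor $|_x$, a morphism in $\mathcal{C}_X$ is an isomorphism as soon as its fibre at $x$ is. So it suffices to check $\epsilon|_x$. Factor it through the fibre $X_s$: we have $\epsilon|_{X_s}:g^*(G|_s)\to F|_{X_s}$. By (c), $G|_s\cong H^0(X_s,F|_{X_s})$, so $\epsilon|_{X_s}$ is the evaluation map $H^0(X_s,F|_{X_s})\otimes\mathcal{O}_{X_s}\to F|_{X_s}$. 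By (b), $F|_{X_s}$ is trivial, and $X_s$ is connected (from $f_*\mathcal{O}_X=\mathcal{O}_S$ via Zariski connectedness) and proper. Hence $H^0(X_s,\mathcal{O}_{X_s})$ is a local Artinian ring, and the evaluation map for a trivial bundle is an isomorphism. Restricting to $x$ then gives that $\epsilon|_x$ is an isomorphism.

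The subtle point is that if $H^0(X_s,\mathcal{O}_{X_s})\neq k$, the ranks might not match. I would handle this by noting that $F$ and $f^*G$ are both objects of $\mathcal{C}_X$, so their ranks can be compared at $x$, using $G|_s\cong H^0(F|_{X_s})$ together with base change for $\mathcal{O}_X$ to conclude that the ranks agree. The parenthetical "for any $t$" version of (3) follows from the same argument carried out at each point $t$.
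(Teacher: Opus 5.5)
Your route matches the paper's. Your $(1)\Rightarrow(2)$ is identical to it. Your $(3)\Rightarrow(2)\Rightarrow(1)$ is the paper's $(3)\Rightarrow(1)$ split in two: the counit $f^*f_*F\to F$ is a morphism of $\mathcal{C}_X$ that is an isomorphism on $X_s$, hence an isomorphism.

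\textbf{The gap in $(2)\Rightarrow(3)$(c).} This is the step you flagged. Write $F\cong f^*G$ with $G=f_*F$. The base change map at $t$ is then $G|_t\to G|_t\otimes_{\kappa(t)}H^0(X_t,\mathcal{O}_{X_t})$, $v\mapsto v\otimes 1$. So for $F\neq 0$, condition (c) is \emph{equivalent} to $H^0(X_t,\mathcal{O}_{X_t})=\kappa(t)$. Neither of your justifications gives this:
\begin{itemize}
\item The projection formula only gives the global isomorphism $f_*f^*E\cong E$ and says nothing about fibres.
\item Base change for $\mathcal{O}_X$ at $t$ \emph{is} the statement $H^0(X_t,\mathcal{O}_{X_t})=\kappa(t)$, so invoking it is circular.
\item Declaring the comparison map to be the identity simply assumes it.
\end{itemize}

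In fact it cannot be derived from the stated hypotheses. Let $f\colon X\to S=\mathbb{P}^1_k$ be a flat family of twisted cubics in $\mathbb{P}^3$ whose fibre over $s=0$ is a plane nodal cubic $C$ with an embedded point at the node (the classical example in Hartshorne, Ch.~III, \S9, closed up over $\mathbb{P}^1$).
\begin{itemize}
\item $X$ is integral and proper.
\item $f_*\mathcal{O}_X$ is a finite torsion-free $\mathcal{O}_S$-algebra of generic rank one, hence equals $\mathcal{O}_S$ by normality.
\item Nevertheless $h^0(\mathcal{O}_{X_0})=2$, using $0\to k\to\mathcal{O}_{X_0}\to\mathcal{O}_C\to 0$.
\end{itemize}
Take $\mathcal{C}_X,\mathcal{C}_S$ to be the categories of trivial bundles; they are Tannakian since $H^0(X,\mathcal{O}_X)=k$. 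Both Tannaka groups are trivial, so (1) and (2) hold. But (3)(c) fails for $F=\mathcal{O}_X=f^*\mathcal{O}_S$.

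The paper's own proof of $(2)\Rightarrow(3)$ makes exactly the same silent move (``taking global sections''). So the statement itself needs an extra hypothesis, such as $H^0(X_s,\mathcal{O}_{X_s})=k$ (resp.\ $H^0(X_t,\mathcal{O}_{X_t})=\kappa(t)$ for all $t$). At $s$ this is automatic in the setting of Theorem~\ref{Main}: a Tannakian category of bundles on $X_s$ with unit $\mathcal{O}_{X_s}$ forces $\mathrm{End}(\mathcal{O}_{X_s})=k$. Under such a hypothesis your argument for (c) is fine.

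\textbf{A second problem in $(3)\Rightarrow(2)$.} You say that since $H^0(X_s,\mathcal{O}_{X_s})$ is local Artinian, the evaluation map of a trivial bundle is an isomorphism. That is false as stated. If $F|_{X_s}\cong\mathcal{O}_{X_s}^{r}$ and $A=H^0(X_s,\mathcal{O}_{X_s})$, the evaluation map $A^{r}\otimes_k\mathcal{O}_{X_s}\to\mathcal{O}_{X_s}^{r}$ has source of rank $r\dim_k A$. It is therefore an isomorphism only when $A=k$.

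Your vague rank comparison is unnecessary here. Apply hypothesis (c) to $E=\mathcal{O}_S$ and $F=\mathcal{O}_X\hookrightarrow f^*\mathcal{O}_S$; this gives $A=k$ directly. Then $\epsilon|_{X_s}$ is the evaluation map of a trivial bundle composed with the isomorphism $G|_s\cong H^0(X_s,F|_{X_s})$, hence an isomorphism. Your conservativity argument via the exact faithful functor $|_x$ then finishes the proof. The paper also uses $A=k$ implicitly at this point.
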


\begin{proof}
$(1)\Rightarrow (2)$ If $\pi(\mathcal{C}_X,x)\rightarrow \pi(\mathcal{C}_S,s)$ is faithfully flat. Since $f^*$ is stable under taking subobjects by Theorem~\ref{Thm1}, for any $E\in\mathcal{C}_S$, and any subobjects $F\hookrightarrow f^*E\in\mathcal{C}_X$, there exists subobject $E'\hookrightarrow E\in\mathcal{C}_S$ such that $F\cong f^*E'$. By $f_*\mathcal{O}_X\cong \mathcal{O}_S$ and projection formula, we have 
$$f_*F\cong f_*f^*E'\cong f_*(\mathcal{O}_X\otimes_{\mathcal{O}_X} f^*E')\cong f_*\mathcal{O}_X\otimes_{\mathcal{O}_S} E'\cong E'\in\mathcal{C}_S.$$
Hence $f^*f_*F\cong f^*E'\cong F$.

$(2)\Rightarrow (3)$ (a) Obviously.

(b) For any $t\in S$, since $f^*f_*F\cong F$, we have $F|_{X_t}\equiv (f_*F)|_t\otimes_k \mathcal{O}_{X_t}$, i.e. $F|_{X_t}$ is a trivial vector bundle.

(c) For any $t\in S$, restricting the isomorphism $f^*f_*F\cong F$ to the fibre $X_t$, we have 
$$f^*f_*F|_{X_t}\equiv (f_*F)|_t\otimes_k \mathcal{O}_X\cong F|_{X_t}.$$
Taking global section, we have $(f_*F)|_t\cong H^0(X_t,F|_{X_t})$. In other words, $F$ satisfies base change at any $t\in S$.

$(3)\Rightarrow (1)$ Let $E\in \mathcal{C}_S$ and $F\hookrightarrow f^*E\in\mathcal{C}_X$ a subobject. On the one hand, since $F|_{X_s}$ is a trivial vector bundle, we have $H^0(X_s,F|_{X_s})\otimes_k \mathcal{O}_X\cong F|_{X_s}$. On the other hand, $F$ satisfies base change at $s$, so we have
$f^*f_*F|_{X_s}\equiv (f_*F)|_s\otimes_k \mathcal{O}_X\cong H^0(X_s,F|_{X_s})\otimes_k \mathcal{O}_X$. Therefore, we obtain
$$f^*f_*F|_{X_s}\cong F|_{X_s}.$$
Consider the exact sequence 
$$\ker\hookrightarrow f^*f_*F\rightarrow F\twoheadrightarrow \coker.$$
Restricting it to $X_s$, we obtain an exact sequence
$$\ker|_{X_s}\hookrightarrow f^*f_*F|_{X_s}\rightarrow F|_{X_s}\twoheadrightarrow \coker|_{X_s}.$$ Then $\ker|_{X_s}=\coker|_{X_s}=0$. Since $\mathcal{C}_X$ is an abelian category, we have $\ker,\coker\in\mathcal{C}_X$. It follows that $\ker=\coker=0$, i.e. $f^*f_*F\cong F$. Applying $f_*$ to $F\hookrightarrow f^*E$, we obtain $f_*F\hookrightarrow f_*f^*E\cong f_*\mathcal{O}_X\otimes_{\mathcal{O}_S}E\cong E$. Since $f_*F\in\mathcal{C}_S$, $f_*F\hookrightarrow E\in\mathcal{C}_S$ and $f^*f_*F\cong F\hookrightarrow f^*E\in\mathcal{C}_X$, we have $f^*$ is stable under taking subobjects. By Lemma~\ref{fullfaithful}, we have $f^*$ is fully faithful. Hence $\pi(\mathcal{C}_X,x)\rightarrow \pi(\mathcal{C}_S,s)$ is faithfully flat by Theorem~\ref{Thm1}.
\end{proof}

\begin{Proposition}\label{MaximalTrivial}
    Let $k$ be a field, $f:X\rightarrow S$ a morphism of connected schemes proper over $k$ with $f_*\mathcal{O}_X\cong\mathcal{O}_S$, $s\in S(k)$. Let $E\in\mathcal{C}_X$ satisfy the following conditions:
    \begin{enumerate}
        \item $f^*f_*E\in\mathcal{C}_X$.
        \item $E$ satisfies base change at $s$.
    \end{enumerate}
    Then the canonical map $f^*f_*E\rightarrow E$ is an embedding in $\mathcal{C}_X$.
\end{Proposition}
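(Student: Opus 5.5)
The strategy is the same as in the step $(3)\Rightarrow(1)$ of Proposition~\ref{generalsur}: a morphism in $\mathcal{C}_X$ is tested on the fibre over $s$, and then on the single point $x$. By hypothesis (1), both $f^*f_*E$ and $E$ lie in $\mathcal{C}_X$, and the counit $\varepsilon: f^*f_*E\rightarrow E$ is a morphism of $\mathcal{O}_X$-modules. Since $\mathcal{C}_X$ is a full subcategory of $\Vect(X)$, $\varepsilon$ is a morphism in $\mathcal{C}_X$. Because $\mathcal{C}_X$ is abelian, $K:=\ker\varepsilon$ and the image lie in $\mathcal{C}_X$, and these are vector bundles. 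The fibre functor $|_x$ is exact and faithful, so $K=0$ if and only if $K|_x=0$. It is therefore enough to show that $\varepsilon|_x$ is injective, or more strongly that $\varepsilon|_{X_s}$ is injective.

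To compute $\varepsilon|_{X_s}$, restrict to the Cartesian square for $X_s\rightarrow\Spec\kappa(s)$. We get
$$(f^*f_*E)|_{X_s}\cong g^*\big((f_*E)|_s\big)=(f_*E)|_s\otimes_k\mathcal{O}_{X_s}.$$
Under this identification, $\varepsilon|_{X_s}$ is the composite
$$(f_*E)|_s\otimes_k\mathcal{O}_{X_s}\xrightarrow{\ \beta\otimes 1\ } H^0(X_s,E|_{X_s})\otimes_k\mathcal{O}_{X_s}\xrightarrow{\ \mathrm{ev}\ } E|_{X_s},$$
where $\beta: s^*f_*E\rightarrow g_*t^*E$ is the base change map. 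By hypothesis (2), $\beta$ is an isomorphism. The task thus reduces to showing that the evaluation map $\mathrm{ev}$ is injective at the point $x$, i.e. on fibres.

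This is the main obstacle, because evaluation of global sections is not injective on fibres for an arbitrary vector bundle. I would use the fact that $E|_{X_s}$ is a vector bundle on the connected proper $k$-scheme $X_s$; connectedness follows from $f_*\mathcal{O}_X\cong\mathcal{O}_S$ by Zariski's connectedness. I would also use that the image of $\mathrm{ev}$ is the subsheaf generated by global sections.

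The cleanest route is to avoid this issue by working with $K$ itself. Since $K\subseteq f^*f_*E$ is a subobject in $\mathcal{C}_X$, and the left exactness of $f_*$ and $g_*$ together with base change give
$$H^0(X_s,K|_{X_s})\hookrightarrow H^0\big(X_s,(f^*f_*E)|_{X_s}\big)\cong H^0(X_s,E|_{X_s}),$$
I would argue that any global section of $K|_{X_s}$ maps to zero in $H^0(X_s,E|_{X_s})$. Hence $K|_{X_s}$ has no nonzero global sections.

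Next, I would use that the quotient $(f^*f_*E)/K$ is a vector bundle, which follows from $\mathcal{C}_X$ being abelian inside $\Vect(X)$. Then $K|_{X_s}$ is a subbundle of the trivial bundle $(f_*E)|_s\otimes\mathcal{O}_{X_s}$ on which every constant section that is nonzero on $H^0$ survives. The fibre map on constant sections, $(f_*E)|_s\rightarrow E|_x$, then has to be compared with $H^0$. If a constant section $v$ vanishes at $x$ but not in $H^0$, this has to be excluded using the rigidity of $\mathcal{C}_X$: the morphism $\mathcal{O}_X\rightarrow E$ determined by such a section is a morphism in $\mathcal{C}_X$, and its fibre at $x$ is zero, so it is zero by faithfulness of $|_x$. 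Hence $v=0$ in $H^0$, and this contradiction shows that $H^0(X_s,E|_{X_s})\rightarrow E|_x$ is injective. Concretely, I expect to need the following chain: elements of $(f_*E)|_s$ lift locally to sections of $f_*E$, hence to morphisms $\mathcal{O}\rightarrow E$ near $X_s$. Faithfulness of $|_x$ is only available for global morphisms in $\mathcal{C}_X$, so I would pass through $f^*f_*E\in\mathcal{C}_X$ and apply the same faithfulness argument to $\varepsilon$ restricted to the trivial pieces.

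Once $\varepsilon|_x$ is injective, $K|_x=0$, so $K=0$ and $\varepsilon$ is a monomorphism in $\mathcal{C}_X$, which is the claimed embedding.
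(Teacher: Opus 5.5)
Your reduction is the same as the paper's. $K=\ker\varepsilon$ lies in $\mathcal{C}_X$, so it suffices to show $K|_{X_s}=0$ (or $K|_x=0$), and base change identifies $\varepsilon|_{X_s}$ with the evaluation map $\mathrm{ev}\colon H^0(X_s,E|_{X_s})\otimes_k\mathcal{O}_{X_s}\to E|_{X_s}$.

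The gap is exactly the step you flag as the main obstacle: injectivity of $\mathrm{ev}$, equivalently of $H^0(X_s,E|_{X_s})\to E|_x$, is never proved. Knowing $H^0(X_s,K|_{X_s})=0$ does not give $K|_{X_s}=0$; think of $\mathcal{O}(-1)\subset\mathcal{O}^{\oplus 2}$ on $\mathbb{P}^1$. The rigidity argument is not available either, because an element of $(f_*E)|_s\cong H^0(X_s,E|_{X_s})$ is not a morphism $\mathcal{O}_X\to E$. The only such morphisms form $H^0(X,E)=H^0(S,f_*E)$, which need not surject onto $(f_*E)|_s$ and may be $0$. So faithfulness of $|_x$ on $\mathcal{C}_X$ says nothing about sections over $X_s$, and ``restricting $\varepsilon$ to the trivial pieces'' does not close the gap.

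In fact no argument using only $\mathcal{C}_X$ can work. Take $k=\mathbb{C}$, $S$ a curve of genus $\geq 2$, and $F$ a stable bundle of rank $2$ and degree $0$ whose unitary monodromy is Zariski dense in $\mathrm{GL}_2$. Let $Q\to S$ be its frame bundle, so $H^0(Q,\mathcal{O}_Q)=k[\mathrm{GL}_2]^{\mathrm{GL}_2}=k$. Let $B\subset\mathrm{GL}_2$ be a Borel subgroup and $X=Q/B=\mathbb{P}(F)$, the bundle of rank-one quotients of $F$. Let $x$ be the image of a point of $Q(k)$, and $\mathcal{C}_X$ the essential image of $\Rep_k^f(B)\to\Vect(X)$, $V\mapsto Q\times^B V$; this functor is fully faithful because $H^0(Q,\mathcal{O}_Q)=k$. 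Then $E=\mathcal{O}_{\mathbb{P}(F)}(1)$ satisfies every hypothesis:
$f_*\mathcal{O}_X=\mathcal{O}_S$, $f^*f_*E=f^*F\in\mathcal{C}_X$, and base change holds since $H^1$ of $\mathcal{O}(1)$ vanishes on every fibre. Yet $f^*F\to\mathcal{O}(1)$ is the universal quotient, with kernel of rank one.

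What is missing is an input on the fibre, which the paper's one-line proof uses tacitly when it writes $H^0(X_s,E|_{X_s})\otimes_k\mathcal{O}_{X_s}\hookrightarrow E|_{X_s}$. In the setting of Theorem~\ref{Main}, where this proposition is applied, $E|_{X_s}$ lies in a Tannakian category $\mathcal{C}_{X_s}$ of vector bundles with fibre functor $|_x$. Then $\mathrm{ev}$ is a morphism in $\mathcal{C}_{X_s}$, and its fibre at $x$ is injective. Indeed, a section of $E|_{X_s}$ vanishing at $x$ is a morphism $\mathcal{O}_{X_s}\to E|_{X_s}$ in $\mathcal{C}_{X_s}$ killed by the faithful functor $|_x$, hence zero. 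So $\ker(\mathrm{ev})\in\mathcal{C}_{X_s}$ has zero fibre and vanishes, and $\mathrm{ev}$ is the inclusion of the maximal trivial subobject. This is exactly the faithfulness argument you were reaching for, run in $\mathcal{C}_{X_s}$ on $X_s$ rather than in $\mathcal{C}_X$ on $X$. With it, $K|_{X_s}=0$, hence $K|_x=0$ and $K=0$.
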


\begin{proof}
    Consider the exact sequence $0\rightarrow \ker \rightarrow f^*f_*E\rightarrow E$, where $\ker\in\mathcal{C}_X$ since $\mathcal{C}_X$ is abelian. Restricting it to $X_s$, since $E$ satisfies base change at $s$, we have
$$0\rightarrow \ker|_{X_s}\rightarrow(f^*f_* E)\vert_{X_s}\equiv f_* E\vert_s \otimes_k \mathcal{O}_{X_s}\cong H^0(X_s,E\vert_{X_s})\otimes_k\mathcal{O}_{X_s}\hookrightarrow E\vert_{X_s}.$$
Then we have $\ker|_{X_s}=0$, so $\ker=0$. Thus $f^*f_*E \hookrightarrow E$ is a subobject in $\mathcal{C}_X$.
\end{proof}    

\begin{Proposition}\label{embedding}
    Let $k$ be a field, $f:X\rightarrow S$ a morphism of schemes proper over $k$, $x\in X(k)$ lying over $s\in S(k)$. Suppose the following conditions:
    \begin{enumerate}
        \item The induced homomorphism $\pi(\mathcal{C}_X,x)\rightarrow \pi(\mathcal{C}_S,s)$ is faithfully flat.
        \item For any $E\in\mathcal{C}_X$, we have $f_*E\in\mathcal{C}_S$.
    \end{enumerate}
    Then the canonical map $f^*f_*E\rightarrow E$ is an embedding in $\mathcal{C}_X$ for any $E\in\mathcal{C}_X$.
\end{Proposition}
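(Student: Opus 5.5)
The plan is to work entirely inside $\mathcal{C}_X$: show that the kernel of the counit $\varepsilon_E: f^*f_*E\rightarrow E$ is zero. Faithful flatness will identify this kernel with the pullback of a subobject of $f_*E$, and the adjunction $f^*\dashv f_*$ will force that subobject to vanish. Since $f_*\mathcal{O}_X\cong\mathcal{O}_S$ is not assumed here, I will avoid Lemma~\ref{fullfaithful} and the projection formula and use only the unit--counit formalism.

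First, fix $E\in\mathcal{C}_X$. By hypothesis (2), $f_*E\in\mathcal{C}_S$, and since pullback induces $f^*:\mathcal{C}_S\rightarrow\mathcal{C}_X$, we get $f^*f_*E\in\mathcal{C}_X$. The counit $\varepsilon_E$ is a morphism of vector bundles between objects of $\mathcal{C}_X$. Assuming, as everywhere in the paper, that $\mathcal{C}_X$ is a full subcategory of $\Vect(X)$, it is a morphism in $\mathcal{C}_X$. Because $\mathcal{C}_X$ is abelian, its kernel $K\hookrightarrow f^*f_*E$ lies in $\mathcal{C}_X$.

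Second, I use hypothesis (1) together with Theorem~\ref{Thm1}(1). Faithful flatness of $\pi(\mathcal{C}_X,x)\rightarrow\pi(\mathcal{C}_S,s)$ means that $f^*\mathcal{C}_S$ is a full subcategory of $\mathcal{C}_X$ that is closed under subobjects. Hence there are an object $E'\in\mathcal{C}_S$ and a morphism $i:E'\rightarrow f_*E$ such that the inclusion $K\hookrightarrow f^*f_*E$ is isomorphic to $f^*(i)$. Since $f^*:\mathcal{C}_S\rightarrow\mathcal{C}_X$ is an exact faithful tensor functor between Tannakian categories, $i$ is a monomorphism; it is injective after applying the fibre functors at $s$ and at $x$, which agree.

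Third, and this is the key step, I apply adjunction. The composite $\varepsilon_E\circ f^*(i):f^*E'\rightarrow E$ is zero by construction. Under $\Hom_{\mathcal{O}_X}(f^*E',E)\cong\Hom_{\mathcal{O}_S}(E',f_*E)$, the morphism $\varepsilon_E\circ f^*(i)$ corresponds to $i$ itself, by the triangle identity $f_*(\varepsilon_E)\circ\eta_{f_*E}=\mathrm{id}_{f_*E}$. Hence $i=0$. Since $i$ is a monomorphism, $E'=0$, and therefore $K\cong f^*E'=0$. So $\varepsilon_E$ is a monomorphism in $\mathcal{C}_X$, i.e.\ an embedding.

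The main obstacle is the second step: extracting from Theorem~\ref{Thm1}(1) not merely that $K\cong f^*E'$ abstractly, but that the inclusion map is of the form $f^*(i)$. This is what fullness of $f^*\mathcal{C}_S\subseteq\mathcal{C}_X$ provides, and it is needed so that the adjunction computation in the third step applies to the actual inclusion. If necessary I would spell this out via the Tannakian dictionary: a $\pi(\mathcal{C}_X,x)$-subrepresentation of a representation pulled back from a quotient group is itself pulled back, and the inclusion is a map of $\pi(\mathcal{C}_S,s)$-representations.
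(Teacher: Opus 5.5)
Your proof is correct, and it takes a different route from the paper's.

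\textbf{The paper's route.} The paper works on the representation side. It uses the adjunction between $\varphi^*$ and the invariants functor $(-)^{\ker}$ (Lemma~\ref{adjoint}), the full faithfulness of $\eta_X^P$ and $\eta_S^{P'}$, the sheaf adjunction $f^*\dashv f_*$, and Yoneda inside $\mathcal{C}_S$; the Yoneda step is where hypothesis (2) enters. From these it identifies $f_*\eta_X^P(V)\cong\eta_S^{P'}(V^{\ker})$. It then matches the counit $f^*f_*E\rightarrow E$ with the inclusion $\varphi^*V^{\ker}\hookrightarrow V$ of $\ker$-invariants, which is visibly injective.

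\textbf{Your route.} You stay inside $\mathcal{C}_X$. The kernel $K$ of the counit is a subobject of $f^*(f_*E)$. Theorem~\ref{Thm1}(1), via fullness and closure under subobjects, writes its inclusion as $f^*(i)$. The triangle identity then shows that $i$ is the adjoint of the zero map, so $i=0$.

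\textbf{Comparison.} Your argument needs noticeably less machinery: no invariants functor, no Jantzen's lemma, and no Yoneda bookkeeping. It also avoids checking that the counit really corresponds to the inclusion of invariants under the chain of Hom-isomorphisms, which the paper asserts through a square of Hom-sets. It also isolates exactly what is used: $f_*E\in\mathcal{C}_S$, and the essential image of $f^*$ being full and closed under subobjects.

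The paper's route gains an explicit Tannakian description of $f_*$ on $\mathcal{C}_X$ as taking $\ker$-invariants. This says $f^*f_*E$ is the largest subobject of $E$ coming from $\mathcal{C}_S$. That maximality also falls out of your approach, since by adjunction every morphism $f^*F\rightarrow E$ factors through the counit.

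\textbf{A small simplification.} Your claim that $i$ is a monomorphism is not needed. Once $i=0$, the map $f^*(i)=0$ is, up to isomorphism, the monomorphism $K\hookrightarrow f^*f_*E$, so $K=0$ directly.
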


\begin{proof}
    Let $\pi:P\rightarrow X$ be the principal $\pi(\mathcal{C}_X,x)$-bundle corresponding to $\mathcal{C}_X$, $\pi':P'\rightarrow S$ the principal $\pi(\mathcal{C}_S,s)$-bundle corresponding to $\mathcal{C}_S$. Then we have the following fully faithful exact tensor functors
    \[
        \begin{aligned}
            \eta_X^P&:\Rep_k^f(\pi(\mathcal{C}_X,x))\rightarrow \mathcal{C}_X,V_X\mapsto (V_X\otimes_k \mathcal{O}_P)^{\pi(\mathcal{C}_X,x)},\eta_S^{P'}:\Rep_k^f(\pi(\mathcal{C}_S,x))\rightarrow \mathcal{C}_S,V_S\mapsto (V_S\otimes_k \mathcal{O}_{P'})^{\pi(\mathcal{C}_S,s)}.
        \end{aligned}
    \]
    Denote $\varphi$ the homomoprhism $\pi(\mathcal{C}_X,x)\rightarrow \pi(\mathcal{C}_S,s)$. Consider the following commutative diagram
    \[
        \begin{tikzcd}
            \Rep_k^f(\pi(\mathcal{C}_S,s))\arrow[r,"\varphi^*"]\arrow[d,"\eta_S^{P'}"]&\Rep_k^f(\pi(\mathcal{C}_X,x))\arrow[d,"\eta_X^P"]\\
            \mathcal{C}_S\arrow[r,"f^*"]&\mathcal{C}_X
        \end{tikzcd}
    \]
    i.e. $f^*(\eta_S^{P'}(V_S))\cong \eta_X^P(\varphi^*V_S)$ for any $V_S\in \Rep_k^f(\pi(\mathcal{C}_S,s))$. Let $\ker$ be the kernel of $\varphi:\pi(\mathcal{C}_X,x)\rightarrow \pi(\mathcal{C}_S,s)$. By Lemma~\ref{adjoint}, $-^{\ker}:\Rep_k^f (\pi(\mathcal{C}_X,x))\rightarrow \Rep_k^f (\pi(\mathcal{C}_S,s))$ is the right adjoint of $\varphi^*:\Rep_k^f (\pi(\mathcal{C}_S,s))\rightarrow\Rep_k^f (\pi(\mathcal{C}_X,x))$. For any $V_S\in \Rep_k^f(\pi(\mathcal{C}_S,s))$ and any $V_X\in\Rep_k^f(\pi(\mathcal{C}_X,x))$, we have
    $$\begin{aligned}
        \Hom_{\Rep_k^f(\pi(\mathcal{C}_S,s))}(V_S,V_X^{\ker})&\cong \Hom_{\Rep_k^f(\pi(\mathcal{C}_X,x))}(\varphi^* V_S,V_X)
    \end{aligned}$$
    Since the functors $\eta_X^P$ and $\eta_S^{P'}$ are fully faithful, we have
    $$\begin{aligned}
        \Hom_{\mathcal{O}_S}(\eta_S^{P'}(V_S),\eta_S^{P'}(V_X^{\ker}))&\cong \Hom_{\mathcal{O}_X}(\eta_X^P(\varphi^*V_S),\eta_X^P(V_X))\\
        &\cong \Hom_{\mathcal{O}_X}(f^*(\eta_S^{P'}(V_S)),\eta_X^P(V_X))\cong \Hom_{\mathcal{O}_S}(\eta_S^{P'}(V_S),f_*(\eta_X^P(V_X))).
    \end{aligned}$$
    Then we have the following commutative diagram
    \[
        \begin{tikzcd}
            \Rep_k^f(\pi(\mathcal{C}_S,s))\arrow[d,"\eta_S^{P'}"]&\Rep_k^f(\pi(\mathcal{C}_X,x))\arrow[l,"-^{\ker}",swap]\arrow[d,"\eta_X^P"]\\
            \mathcal{C}_S&\mathcal{C}_X\arrow[l,"f_*",swap]
        \end{tikzcd}
    \]
    i.e. $f_*(\eta_X^P(V_X))\cong \eta_S^{P'}(V_X^{\ker})$ for any $V_X\in \Rep_k^f(\pi(\mathcal{C}_X,x))$. Let $V_X\in\Rep_k^f(\pi(\mathcal{C}_X,x))$, $E=\eta_X^P(V_X)\in\mathcal{C}_X$, then we have $f_*E=f_*(\eta_X^P(V_X))\cong \eta_S^{P'}(V_X^{\ker})$. Consider the following commutative diagram
    \[
        \begin{tikzcd}
            \Hom_S(f_*E,f_*E)\arrow[r,"\cong"]\arrow[d,"\cong"]& \Hom_X(f^*f_*E, E)\arrow[d]\\
            \Hom_{\pi(\mathcal{C}_S,s)}(V_X^{\ker},V_X^{\ker})\arrow[r,"\cong"]&\Hom_{\pi(\mathcal{C}_X,x)}(\varphi^*V_X^{\ker},V_X)
        \end{tikzcd}
    \]
    Then we have $\Hom_X(f^*f_*E, E)\cong \Hom_{\pi(\mathcal{C}_X,x)}(\varphi^*V_X^{\ker},V_X)$. Consider the correspondence of canonical map
    $$\begin{aligned}
      f^*f_*E\rightarrow E ~~\longleftrightarrow~~ \varphi^*V_X^{\ker} \hookrightarrow V_X
    \end{aligned}$$
    Then the canonical map $f^*f_*E\rightarrow E$ is an embedding in $\mathcal{C}_X$.
\end{proof}

\begin{Proposition}\label{equivprin}
    Let $k$ be a field, $f:X\rightarrow S$ a morphism of connected schemes proper over field $k$, $x\in X(k)$ lying over $s\in S(k)$. Denote 
    \begin{enumerate}
        \item[(a)] $\phi:P\rightarrow X$ the principal $\pi(\mathcal{C}_X,x)$-bundle corresponding to $\mathcal{C}_X$.
        \item[(b)] $\phi':P'\rightarrow S$ the principal $\pi(\mathcal{C}_S,s)$-bundle corresponding to $\mathcal{C}_S$.
    \end{enumerate}
    Suppose the following conditions:
    \begin{enumerate}
        \item[I.] The induced homomorphism $\pi(\mathcal{C}_X,x)\twoheadrightarrow \pi(\mathcal{C}_S,s)$ is faithfully flat.
        \item[II.] For any $E\in\mathcal{C}_X$, we have $f_*E\in\mathcal{C}_S$.
    \end{enumerate}
    For any filtered colimit system $\{\mathcal{E}_\alpha\}_{\alpha\in I}$ of subsheaves of $\phi_*\mathcal{O}_P$, s.t. $\phi_*\mathcal{O}_P=\varinjlim\mathcal{E}_{\alpha}$ and $\mathcal{E}_{\alpha}\in \mathcal{C}_X$. Then
    \begin{enumerate}
        \item $\langle \{f_* \mathcal{E}_{\alpha} \}_{\alpha\in I}\rangle_{\mathcal{C}_S}=\mathcal{C}_S$.
        \item The canonical map $f^*{\phi'}_*\mathcal{O}_{P'}\rightarrow f^*f_*\phi_*\mathcal{O}_P$ induced by $\pi(\mathcal{C}_X,x)\rightarrow \pi(\mathcal{C}_S,s)$ is an isomorphism.
    \end{enumerate}
\end{Proposition}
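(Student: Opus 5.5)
The plan is to translate everything into representation theory of $G:=\pi(\mathcal{C}_X,x)$ and $H:=\pi(\mathcal{C}_S,s)$. Let $\varphi:G\twoheadrightarrow H$ be the faithfully flat homomorphism from hypothesis I and $N:=\ker\varphi$, so that $H=G/N$. The key input is the identity established inside the proof of Proposition~\ref{embedding}, which uses I and II:
\[
f_*(\eta_X^P(V))\cong \eta_S^{P'}(V^{N}) \quad\text{for } V\in\Rep_k^f(G).
\]
It is natural in $V$, since it comes from the adjunction of Lemma~\ref{adjoint} and the full faithfulness of $\eta_X^P,\eta_S^{P'}$.

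First I identify the filtered system. Since $\eta_X^P:\Rep_k^f(G)\to\mathcal{C}_X$ is an equivalence and each $\mathcal{E}_\alpha\in\mathcal{C}_X$ is a subsheaf of $\phi_*\mathcal{O}_P=\eta_X^P(k[G])$, the inclusion $\mathcal{E}_\alpha\hookrightarrow\phi_*\mathcal{O}_P$ factors through some $\eta_X^P(W)$ with $W\subseteq k[G]$ finite dimensional. Full faithfulness on finite objects then shows $\mathcal{E}_\alpha=\eta_X^P(W_\alpha)$ for a $G$-invariant finite-dimensional subspace $W_\alpha\subseteq k[G]$. Moreover $\bigcup_\alpha W_\alpha=k[G]$: each finite-dimensional $W\subseteq k[G]$ has $\eta_X^P(W)$ a finitely generated (coherent) subsheaf of $\varinjlim\mathcal{E}_\alpha$, so it lies in some $\mathcal{E}_\alpha$, hence $W\subseteq W_\alpha$. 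I expect this bookkeeping, together with the next step, to be the main technical point.

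Next, $f$ is proper, hence quasi-compact and separated, so $f_*$ commutes with filtered colimits of quasi-coherent sheaves. Therefore
\[
f_*\phi_*\mathcal{O}_P=\varinjlim f_*\mathcal{E}_\alpha\cong\varinjlim\eta_S^{P'}(W_\alpha^{N})=\eta_S^{P'}\bigl(k[G]^{N}\bigr).
\]
For the regular representation $(g\cdot f)(h)=f(gh)$, the $N$-invariants are exactly the functions constant on $N$-cosets, i.e. $k[G]^N=k[G/N]=k[H]$, embedded via $\varphi^*:k[H]\to k[G]$ (this uses faithful flatness of $\varphi$). Hence $f_*\phi_*\mathcal{O}_P\cong\eta_S^{P'}(k[H])=\phi'_*\mathcal{O}_{P'}$.

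I then check that this isomorphism is the canonical map $\phi'_*\mathcal{O}_{P'}\to f_*\phi_*\mathcal{O}_P$ induced by $\varphi$. This holds because that map is $\eta$ applied to $\varphi^*:k[H]\hookrightarrow k[G]$, and it corresponds under the adjunction to the inclusion $k[H]\cong k[G]^N\hookrightarrow k[G]$. Pulling back by $f^*$ gives (2).

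For (1), the subspaces $W_\alpha^N$ form a filtered system of finite-dimensional $H$-subrepresentations of $k[H]$ with union $k[H]$. Take any $E'\in\mathcal{C}_S$ and write $E'=\eta_S^{P'}(U)$. By Lemma~\ref{regularrepresentation}, $U\hookrightarrow k[H]^{\oplus n}$. Since $U$ is finite dimensional and the system is filtered, the image lies in $(W_\alpha^N)^{\oplus n}$ for some $\alpha$. Applying $\eta_S^{P'}$ gives $E'\hookrightarrow (f_*\mathcal{E}_\alpha)^{\oplus n}$, so $E'\in\langle\{f_*\mathcal{E}_\alpha\}\rangle_{\mathcal{C}_S}$ by the definition of the generated subcategory (take $E_1=0$). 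The reverse inclusion is trivial, which proves (1).
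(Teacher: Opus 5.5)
Your proof is correct, but it takes a genuinely different route from the paper's. Write $G=\pi(\mathcal{C}_X,x)$ and $H=\pi(\mathcal{C}_S,s)$ as you do.

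\textbf{The paper's route.} It never computes $f_*\phi_*\mathcal{O}_P$ explicitly. It uses Nori's pullback bundle $f^*P'$, identified with the bundle induced from $P$ along the faithfully flat $\varphi$, to get $f^*\phi'_*\mathcal{O}_{P'}\hookrightarrow\phi_*\mathcal{O}_P$. It then uses only the \emph{statement} of Proposition~\ref{embedding}, together with commutation of $f_*$ with filtered colimits, to get $f^*f_*\phi_*\mathcal{O}_P\hookrightarrow\phi_*\mathcal{O}_P$. Finally it introduces the auxiliary group $G'=\pi(\langle\{f_*\mathcal{E}_\alpha\}\rangle_{\mathcal{C}_S},s)$. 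Translating the resulting chain of embeddings into representations gives $k[G']\subseteq k[H]\subseteq V\subseteq k[G]$, with $V$ fixed by $K:=\ker(G\to G')$. Since $k[G]^{K}=k[G']$, everything is squeezed to equality, which proves (1) and (2) at once.

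\textbf{Your route.} You extract the intermediate identity $f_*\eta_X^P(V)\cong\eta_S^{P'}(V^N)$ from the \emph{proof} of Proposition~\ref{embedding} and push it through the filtered colimit. You then apply the same invariant-theoretic fact as the paper, $k[G]^N=k[G/N]$ (Waterhouse, Chapter~16.3), but to $N=\ker\varphi$ rather than to $K$. You should cite that fact rather than rely on the ``constant on cosets'' description, which is only a pointwise heuristic for group schemes.

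\textbf{What each buys.} Your approach dispenses with $f^*P'$ and $G'$. It gives the sharper statement that $\phi'_*\mathcal{O}_{P'}\to f_*\phi_*\mathcal{O}_P$ is already an isomorphism on $S$, before applying $f^*$. It also turns (1) into a direct cofinality argument via Lemma~\ref{regularrepresentation}. The cost is the bookkeeping you identify: naturality of the identity in $V$, its extension to ind-representations, the identification $\mathcal{E}_\alpha=\eta_X^P(W_\alpha)$ with $\bigcup_\alpha W_\alpha=k[G]$, and matching the abstract isomorphism with the canonical adjunction map. The paper's squeeze avoids the naturality and matching issues, because it only compares subobjects of $k[G]$ along canonical inclusions.
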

\begin{proof}
We have the following fully faithful exact tensor functors
    \[
        \begin{aligned}
            \eta_S^{P'}&:\Rep_k^f(\pi(\mathcal{C}_S,s))\rightarrow \mathcal{C}_S,V_S\mapsto (V_S\otimes_k \mathcal{O}_{P'})^{\pi(\mathcal{C}_S,s)},\\
            \eta_X^P&:\Rep_k^f(\pi(\mathcal{C}_X,x))\rightarrow \mathcal{C}_X,V_X\mapsto (V_X\otimes_k \mathcal{O}_P)^{\pi(\mathcal{C}_X,x)}.
        \end{aligned}
    \]
By \cite[Proposition~2.9]{Nor76}, we have the principal $\pi(\mathcal{C}_S,s)$-bundle $f^*P'\rightarrow X$ and it corresponds to the functor
\[
        \begin{aligned}
            \eta_X^{f^*P'}&:\Rep_k^f(\pi(\mathcal{C}_S,x))\rightarrow \mathcal{C}_X,V_S\mapsto f^*\eta_S^{P'}(V_S).\\
        \end{aligned}
    \]
The composite $(\eta_X^P)^{-1} \circ \eta_X^{f^*P'}$ coincides with the functor induced by the homomorphism $\pi(\mathcal{C}_X,x)\twoheadrightarrow \pi(\mathcal{C}_S,s)$. Since the induced homomorphism $\pi(\mathcal{C}_X,x)\twoheadrightarrow \pi(\mathcal{C}_S,s)$ is faithfully flat, by Theorem~\ref{Thm1}, the functor $f^*$ is fully faithful and $\eta_X^{f^*P'}=\eta_X^P\circ f^*$ is fully faithful. Then $f^*P'$ is the principal $\pi(\mathcal{C}_S,s)$-bundle induced from $P$ by the faithfully flat homomorphism $\pi(\mathcal{C}_X,x)\rightarrow \pi(\mathcal{C}_S,s)$. So we obtain a faithfully flat morphism $P\twoheadrightarrow f^*P'$ and $f^*{\phi'}_*\mathcal{O}_{P'}\hookrightarrow \phi_*\mathcal{O}_P$. Since $f^*{\phi'}_*\mathcal{O}_{P'}\hookrightarrow \phi_*\mathcal{O}_P$ factors through 
$$f^*{\phi'}_*\mathcal{O}_{P'}\rightarrow f^*f_*\phi_*\mathcal{O}_P \rightarrow \phi_*\mathcal{O}_P,$$
we obtain a canonical embedding $f^*{\phi'}_*\mathcal{O}_{P'}\hookrightarrow f^*f_*\phi_*\mathcal{O}_P$.
 
For any $E\in\mathcal{C}_X$, we have $f_*E\in\mathcal{C}_S$, so $\{f_*\mathcal{E}_{\alpha}\}_{\alpha\in I}\subseteq \mathcal{C}_S$. We denote the Tannaka group scheme of the Tannakian category $\langle\{f_*\mathcal{E}_{\alpha}\}_{\alpha\in I}\rangle_{\mathcal{C}_S}$ by $G':=\pi(\langle\{f_*\mathcal{E}_{\alpha}\}_{\alpha\in I}\rangle_{\mathcal{C}_S},s)$ and the principal $G'$-bundle corresponding to $\langle\{f_*\mathcal{E}_{\alpha}\}_{\alpha\in I}\rangle_{\mathcal{C}_S}$ by $\phi'': P''\rightarrow S$. Then we have a faithfully flat morphism $P'\twoheadrightarrow P''$ since $P''$ is induced from $P'$ by the faithfully flat homomorphism $\pi(\mathcal{C}_S,s)\twoheadrightarrow G'$. It follows that ${\phi''}_*\mathcal{O}_{P''}\hookrightarrow {\phi'}_*\mathcal{O}_{P'}$, which corresponds to $k[G']\hookrightarrow k[\pi(\mathcal{C}_S,s)]\in\Rep_k(\pi(\mathcal{C}_S,s))$. Since the faithfully flat homomorphism $\pi(\mathcal{C}_X,x)\rightarrow \pi(\mathcal{C}_S,s)$ induces an exact functor $\Rep_k(\pi(\mathcal{C}_S,s))\rightarrow \Rep_k(\pi(\mathcal{C}_X,x))$, we have
$$f^*{\phi''}_*\mathcal{O}_{P''}\hookrightarrow f^*{\phi'}_*\mathcal{O}_{P'}\hookrightarrow f^*f_*\phi_*\mathcal{O}_P\rightarrow\phi_*\mathcal{O}_P.$$

Denote $E:={\phi}_*\mathcal{O}_{P}$, $E':={\phi'}_*\mathcal{O}_{P'}$ and $E'':=\phi'_*\mathcal{O}_{P''}$. By I, II and Proposition~\ref{embedding}, for any $F\in\mathcal{C}_X$ the canonical map $f^*f_*F\rightarrow F$ is an embedding in $\mathcal{C}_X$. In particular, we have $f^*f_*\mathcal{E}_\alpha\hookrightarrow \mathcal{E}_\alpha$ for any $\alpha\in I$. Taking colimit and by \cite[\href{https://stacks.math.columbia.edu/tag/07U6}{Tag 07U6}]{stacks}, we obtain a canonical embedding $f^*f_*E\hookrightarrow E$. Then we have 
$$f^*E''\hookrightarrow f^*E'\hookrightarrow f^*f_*E\hookrightarrow E,$$
which corresponds to an embedding 
$$k[G']\hookrightarrow k[\pi(\mathcal{C}_S,s)]\hookrightarrow V\hookrightarrow k[\pi(\mathcal{C}_X,x)]\in\Rep_k(\pi(\mathcal{C}_X,x)),$$
where $k[G']$ and $V\in\Rep_k(\pi(\mathcal{C}_X,x))$ come from $\Rep_k(G')$. 

Let $H$ be the kernel of $\pi(\mathcal{C}_X,x)\twoheadrightarrow \pi(\mathcal{C}_S,s)\twoheadrightarrow G'$. Then $V^H=\{v\in V\vert \rho(h)v=v,\forall h\in H\}=V$ and 
$$k[G']^H=k[G']\hookrightarrow k[\pi(\mathcal{C}_S,s)]^H \hookrightarrow V^H=V\hookrightarrow k[\pi(\mathcal{C}_X,x)]^H\in\Rep_k(\pi(\mathcal{C}_X,x)).$$
Since $k[G']=k[\pi(\mathcal{C}_X,x)]^H$ by \cite[Chapter~16.3]{Wat79}, we have $k[G']=V$. This implies $k[G']=k[\pi(\mathcal{C}_S,s)]=V$. Then the faithfully flat homomorphism $\pi(\mathcal{C}_S,s)\rightarrow G'$ is an isomorphism. Hence $\langle \{f_* \mathcal{E}_{\alpha\in I} \}_{\alpha}\rangle_{\mathcal{C}_S}=\mathcal{C}_S$. Moreover, we have 
$$f^*E''\cong f^*E'\cong f^*f_*E,\text{ i.e. }
f^*{\phi''}_*\mathcal{O}_{P''}\cong f^*{\phi'}_*\mathcal{O}_{P'}\cong f^*f_*\phi_*\mathcal{O}_P.$$
\end{proof}

\begin{Proposition}\label{equivpushforward}
    Let $k$ be a field, $f:X\rightarrow S$ a morphism of connected schemes proper over $k$, $x\in X(k)$ lying over $s\in S(k)$. Denote $\phi:P\rightarrow X$ the principal $\pi(\mathcal{C}_X,x)$-bundle corresponding to $\mathcal{C}_X$. Then the following conditions are equivalent:
\begin{enumerate}
    \item For any $E\in\mathcal{C}_X$, $f_*E\in\mathcal{C}_S$.
    \item Let $M\subseteq\mathcal{C}_X$, $\phi_M:P_M\rightarrow X$ be the principal $\pi(\langle M\rangle_{\mathcal{C}_X}, x)$-bundle corresponding to $\langle M\rangle_{\mathcal{C}_X}$. For any filtered colimit systems $\{{\mathcal{E}_M}_\alpha\}_{\alpha\in I}$ in $\mathcal{C}_X$, s.t. ${\phi_M}_*\mathcal{O}_{P_M}=\varinjlim{\mathcal{E}_M}_\alpha$, we have $f_*{\mathcal{E}_M}_\alpha\in\mathcal{C}_S$ for any $\alpha$.
    \item Let $M\subseteq\mathcal{C}_X$, $\phi_M:P_M\rightarrow X$ the principal $\pi(\langle M\rangle_{\mathcal{C}_X}, x)$-bundle corresponding to $\langle M\rangle_{\mathcal{C}_X}$. There is a filtered colimit system $\{{\mathcal{E}_M}_\alpha\}_{\alpha\in I}$ in $\mathcal{C}_X$, s.t. ${\phi_M}_*\mathcal{O}_{P_M}=\varinjlim{\mathcal{E}_M}_\alpha$ and $f_*{\mathcal{E}_M}_\alpha\in\mathcal{C}_S$ for any $\alpha$.
    \item For any filtered colimit systems $\{\mathcal{E}_\alpha\}_{\alpha\in I}$ in $\mathcal{C}_X$, s.t. $\phi_*\mathcal{O}_P=\varinjlim\mathcal{E}_{\alpha}$, we have $f_*\mathcal{E}_\alpha\in\mathcal{C}_S$ for any $\alpha$.
    \item There is a filtered colimit system $\{\mathcal{E}_\alpha\}_{\alpha\in I}$ in $\mathcal{C}_X$, s.t. $\phi_*\mathcal{O}_P=\varinjlim\mathcal{E}_{\alpha}$ and $f_*\mathcal{E}_\alpha\in\mathcal{C}_S$ for any $\alpha$.
\end{enumerate}
\end{Proposition}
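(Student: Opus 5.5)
The plan is to run the cycle of implications $(1)\Rightarrow(2)\Rightarrow(3)\Rightarrow(5)\Rightarrow(1)$, with the side chain $(2)\Rightarrow(4)\Rightarrow(5)$. The implications $(1)\Rightarrow(2)$ and $(1)\Rightarrow(4)$ are immediate, since every ${\mathcal{E}_M}_\alpha$ (resp. $\mathcal{E}_\alpha$) is an object of $\mathcal{C}_X$. For $(2)\Rightarrow(4)$, take $M=\mathcal{C}_X$, so that $\langle M\rangle_{\mathcal{C}_X}=\mathcal{C}_X$ and $P_M=P$. Similarly, $(3)\Rightarrow(5)$ follows by specializing to $M=\mathcal{C}_X$. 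For $(2)\Rightarrow(3)$ and $(4)\Rightarrow(5)$ we only need that some filtered colimit system exists. The canonical one is $\mathcal{E}_\alpha=\eta_X^P(W_\alpha)$, with $W_\alpha$ running over the finite dimensional invariant subspaces of the regular representation, as recalled in Section 2.

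The substantive step is $(5)\Rightarrow(1)$. Fix the system $\{\mathcal{E}_\alpha\}$ from (5) and let $E\in\mathcal{C}_X$, say $E=\eta_X^P(V)$ with $V\in\Rep_k^f(\pi(\mathcal{C}_X,x))$.

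First, by Lemma~\ref{regularrepresentation}, $V\hookrightarrow k[\pi(\mathcal{C}_X,x)]^{\oplus n}$. Since $V$ is finite dimensional and $k[\pi(\mathcal{C}_X,x)]=\varinjlim W_\alpha$ with $W_\alpha:=(\eta_X^P)^{-1}(\mathcal{E}_\alpha)$ (these are subrepresentations, by full faithfulness and exactness of $\eta_X^P$ extended to ind-objects), the image of $V$ lands in $W_\alpha^{\oplus n}$ for some $\alpha$, by filteredness. Applying $\eta_X^P$ gives $E\hookrightarrow\mathcal{E}_\alpha^{\oplus n}$ in $\mathcal{C}_X$.

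Next, apply the same argument to $E^\vee$, or equivalently to the cokernel. This gives an exact sequence $0\to E\to\mathcal{E}_\alpha^{\oplus n}\to\mathcal{E}_\beta^{\oplus m}$ in $\mathcal{C}_X$: embed $Q=\mathcal{E}_\alpha^{\oplus n}/E\in\mathcal{C}_X$ into some $\mathcal{E}_\beta^{\oplus m}$. Left exactness of $f_*$ then yields
\[0\to f_*E\to f_*\mathcal{E}_\alpha^{\oplus n}\to f_*\mathcal{E}_\beta^{\oplus m},\]
so $f_*E$ is the kernel of a morphism between objects of $\mathcal{C}_S$.

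The main obstacle is to conclude that this kernel, computed in $\Qcoh(S)$, coincides with the kernel computed in $\mathcal{C}_S$. That requires the morphism $f_*\mathcal{E}_\alpha^{\oplus n}\to f_*\mathcal{E}_\beta^{\oplus m}$ to be a morphism in $\mathcal{C}_S$ whose categorical kernel is the sheaf kernel. I would assume, as is implicit throughout the paper, that $\mathcal{C}_S$ is a full subcategory of $\Vect(S)$ and that the inclusion into $\Qcoh(S)$ is exact, as for the abelian categories of vector bundles used in Proposition~\ref{generalsur}, where kernels are taken in $\mathcal{C}_X$ directly. Granting this, the kernel lies in $\mathcal{C}_S$ and $f_*E\in\mathcal{C}_S$.

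If exactness of the inclusion is not available, the fallback is to use $f^*$ and Proposition~\ref{MaximalTrivial}-type arguments; I expect this point to need the most care.

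Finally, $(5)\Rightarrow(3)$, needed to close the loop through (2) and (3), is obtained by routing through (1): (5) gives (1), and (1) gives (2), hence (3). The corresponding argument for a general $M$ is the same, replacing $P$ by $P_M$ and $\mathcal{C}_X$ by $\langle M\rangle_{\mathcal{C}_X}$.
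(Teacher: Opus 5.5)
Your proposal is correct and follows the paper's own proof. The easy implications come from specializing to $M=\mathcal{C}_X$ together with the existence of the canonical system $\eta_X^P(W_\alpha)$. The step $(5)\Rightarrow(1)$ embeds $E$, and then the cokernel, into finite direct sums of the $\mathcal{E}_\alpha$ via Lemma~\ref{regularrepresentation}, and then applies left exactness of $f_*$.

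The exactness of $\mathcal{C}_S\hookrightarrow\Qcoh(S)$ that you flag is exactly what the paper relies on when it says ``since $\mathcal{C}_S$ is an abelian category''. It is built into the setup of Section~2, where $\mathcal{C}_X\hookrightarrow\Qcoh(X)$ is taken to be a fully faithful exact tensor functor, so no fallback argument is needed.
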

\begin{proof}
$(1)\Rightarrow (2)$ Obviously.

$(2)\Rightarrow (3)$ Obviously.

$(4)\Rightarrow (5)$ Obviously.

$(2)\Rightarrow (4)$ Take $M=\mathcal{C}_X$, it follows immediately.

$(3)\Rightarrow (5)$ Take $M=\mathcal{C}_X$, it follows immediately.

$(5)\Rightarrow (1)$ Suppose there is a filtered colimit system $\{\mathcal{E}_\alpha\}_{\alpha\in I}$ in $\mathcal{C}_X$, s.t. $\phi_*\mathcal{O}_P=\varinjlim\mathcal{E}_{\alpha}$ and $f_*\mathcal{E}_\alpha\in\mathcal{C}_S$ for any $\alpha$. Let $E\in\mathcal{C}_X$, denote its corresponding $\pi(\mathcal{C}_X,x)$-representation by $V$. By Lemma~\ref{regularrepresentation}, there exists some positive integer $n$ such that $V\hookrightarrow k[\pi(\mathcal{C}_X,x)]^{\oplus n}$. Then there exists an inclusion 
$$E\hookrightarrow \mathcal{E}_{\alpha_1}\oplus\cdots\oplus \mathcal{E}_{\alpha_n},$$
where $\alpha_i\in I$ for any $i$. Consider the following exact sequence
$$0\rightarrow E\hookrightarrow \mathcal{E}_{\alpha_1}\oplus\cdots\oplus \mathcal{E}_{\alpha_n}\rightarrow \coker\rightarrow 0.$$
Note that $\coker\in\mathcal{C}_X$, applying Lemma~\ref{regularrepresentation} again, we obtain $\coker\hookrightarrow \mathcal{E}_{\alpha_1'}\oplus\cdots\oplus \mathcal{E}_{\alpha_m'}$ and an exact sequence
$$0\rightarrow E\rightarrow \mathcal{E}_{\alpha_1}\oplus\cdots\oplus \mathcal{E}_{\alpha_n}\rightarrow \mathcal{E}_{\alpha_1'}\oplus\cdots\oplus \mathcal{E}_{\alpha_m'}.$$
where $\alpha'_j\in I$ for any $j$. Applying the left exact functor $f_*$, we obtain an exact sequence
$$0\rightarrow f_*E\rightarrow f_*\mathcal{E}_{\alpha_1}\oplus\cdots\oplus f_*\mathcal{E}_{\alpha_n}\rightarrow f_*\mathcal{E}_{\alpha_1'}\oplus\cdots\oplus f_*\mathcal{E}_{\alpha_m'}.$$
By assumption, we have $f_*\mathcal{E}_\alpha\in\mathcal{C}_S$ for any $\alpha\in I$, thus $f_*\mathcal{E}_{\alpha_1}\oplus\cdots\oplus f_*\mathcal{E}_{\alpha_n}\in\mathcal{C}_S$ and $f_*\mathcal{E}_{\alpha_1'}\oplus\cdots\oplus f_*\mathcal{E}_{\alpha_m'}\in\mathcal{C}_S$. Then we have $f_*E\in\mathcal{C}_S$ since $\mathcal{C}_S$ is an abelian category.
\end{proof}

\begin{Proposition}\label{equivbasechange}
    Let $k$ be a field, $f:X\rightarrow S$ a morphism of connected schemes proper over $k$, $x\in X(k)$ lying over $s\in S(k)$. Denote $\phi:P\rightarrow X$ the principal $\pi(\mathcal{C}_X,x)$-bundle corresponding to $\mathcal{C}_X$. Then the following conditions are equivalent.
    \begin{enumerate}
        \item For any $E\in\mathcal{C}_X$, we have $f_*E\in\mathcal{C}_S$ and $E$ satisfies base change at $s$.
        \item Let $M\subseteq\mathcal{C}_X$, $\phi_M:P_M\rightarrow X$ the principal $\pi(\langle M\rangle_{\mathcal{C}_X}, x)$-bundle corresponding to $\langle M\rangle_{\mathcal{C}_X}$. For any filtered colimit systems $\{{\mathcal{E}_M}_\alpha\}_{\alpha\in I}$ in $\mathcal{C}_X$, s.t. ${\phi_M}_*\mathcal{O}_{P_M}=\varinjlim{\mathcal{E}_M}_\alpha$, we have $f_*{\mathcal{E}_M}_\alpha\in\mathcal{C}_S$ and ${\mathcal{E}_M}_\alpha$ satisfies base change at $s$ for any $\alpha$.
    \item Let $M\subseteq\mathcal{C}_X$, $\phi_M:P_M\rightarrow X$ the principal $\pi(\langle M\rangle_{\mathcal{C}_X}, x)$-bundle corresponding to $\langle M\rangle_{\mathcal{C}_X}$. There is a filtered colimit system $\{{\mathcal{E}_M}_\alpha\}_{\alpha\in I}$ in $\mathcal{C}_X$, s.t. ${\phi_M}_*\mathcal{O}_{P_M}=\varinjlim{\mathcal{E}_M}_\alpha$, $f_*{\mathcal{E}_M}_\alpha\in\mathcal{C}_S$ and ${\mathcal{E}_M}_\alpha$ satisfies base change at $s$ for any $\alpha$.
    \item For any filtered colimit systems $\{\mathcal{E}_\alpha\}_{\alpha\in I}$ in $\mathcal{C}_X$, s.t. $\phi_*\mathcal{O}_P=\varinjlim\mathcal{E}_{\alpha}$, we have $f_*\mathcal{E}_\alpha\in\mathcal{C}_S$ and $\mathcal{E}_\alpha$ satisfies base change at $s$ for any $\alpha$.
    \item There is a filtered colimit system $\{\mathcal{E}_\alpha\}_{\alpha\in I}$ in $\mathcal{C}_X$, s.t. $\phi_*\mathcal{O}_P=\varinjlim\mathcal{E}_{\alpha}$, $f_*\mathcal{E}_\alpha\in\mathcal{C}_S$ and $\mathcal{E}_\alpha$ satisfies base change at $s$ for any $\alpha$.
    \end{enumerate}
\end{Proposition}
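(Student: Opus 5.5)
The plan is to mirror the proof of Proposition~\ref{equivpushforward}, adding the base change condition throughout. The implications $(1)\Rightarrow(2)\Rightarrow(3)\Rightarrow(5)$ and $(2)\Rightarrow(4)\Rightarrow(5)$ are formal. For $(2)\Rightarrow(3)$ we only need that a filtered colimit system exists, namely $\{\eta(W_\alpha)\}$ over finite dimensional invariant subspaces $W_\alpha\subseteq k[\pi(\langle M\rangle_{\mathcal{C}_X},x)]$. For the implications through (4) and (5) we take $M=\mathcal{C}_X$. So the only real content is $(5)\Rightarrow(1)$. The pushforward part of (1) already follows from Proposition~\ref{equivpushforward} $(5)\Rightarrow(1)$, so it remains to prove base change at $s$ for an arbitrary $E\in\mathcal{C}_X$.

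For $E\in\mathcal{C}_X$ we use the resolution constructed in the proof of Proposition~\ref{equivpushforward}, built from Lemma~\ref{regularrepresentation} applied twice:
$$0\rightarrow E\rightarrow A:=\bigoplus_i\mathcal{E}_{\alpha_i}\rightarrow B:=\bigoplus_j\mathcal{E}_{\alpha'_j}.$$
Here $A$ and $B$ satisfy base change at $s$, since base change is compatible with finite direct sums. Applying $f_*$ and then $s^*$ gives the top row $s^*f_*E\rightarrow s^*f_*A\rightarrow s^*f_*B$. Restricting to $X_s$ and taking $H^0$ gives the bottom row
$$0\rightarrow H^0(X_s,E|_{X_s})\rightarrow H^0(X_s,A|_{X_s})\rightarrow H^0(X_s,B|_{X_s}).$$
The canonical base change maps connect the two rows into a commutative ladder.

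We need to check that both rows are left exact. The bottom row is exact because restriction of a sequence in $\mathcal{C}_X$ is exact; all terms are vector bundles and the cokernels lie in $\mathcal{C}_X$, so the sequences stay exact after restriction to a fibre, which follows from exactness of the fibre functor at $x$ together with local freeness of cokernels. $H^0$ is then left exact.

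The main obstacle is the top row. Since $s^*$ is only right exact, we need exactness of $0\rightarrow f_*E\rightarrow f_*A\rightarrow f_*B$ to survive passage to $\kappa(s)$. The plan is as follows. Because $f_*E$, $f_*A$ and $f_*B$ lie in $\mathcal{C}_S$, which is abelian with exact fibre functor $|_s$, factor $f_*A\rightarrow f_*B$ through its image $I\in\mathcal{C}_S$. Then $0\rightarrow f_*E\rightarrow f_*A\rightarrow I\rightarrow 0$ is exact in $\mathcal{C}_S$, and $I\hookrightarrow f_*B$ is a monomorphism in $\mathcal{C}_S$. Both remain exact after applying $|_s$, because in a Tannakian category of vector bundles the fibre functor is exact. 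This requires knowing that kernels and images in $\mathcal{C}_S$ agree with those of sheaves, which holds since $\mathcal{C}_S$ is assumed to be an abelian subcategory of vector bundles in this setting.

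With both rows left exact, the ladder has isomorphisms in the $A$ and $B$ positions. A diagram chase, in the form of the five lemma on kernels, then shows that $s^*f_*E\rightarrow H^0(X_s,E|_{X_s})$ is an isomorphism, so $E$ satisfies base change at $s$. This proves $(5)\Rightarrow(1)$ and completes the cycle of equivalences.
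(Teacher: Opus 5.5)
Your proposal is correct and follows the paper's proof essentially step for step. The formal implications come from taking $M=\mathcal{C}_X$, and $(5)\Rightarrow(1)$ comes from Proposition~\ref{equivpushforward} together with comparing the two left exact rows built from $0\to E\to\bigoplus\mathcal{E}_{\alpha_i}\to\bigoplus\mathcal{E}_{\alpha'_j}$ via the base change maps. Your argument that the top row stays exact after taking the fibre at $s$ (factoring through the image in $\mathcal{C}_S$ and using exactness of $|_s$) makes explicit a step the paper only asserts.
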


\begin{proof}
    $(1)\Rightarrow (2)$ Obviously.

$(2)\Rightarrow (3)$ Obviously.

$(4)\Rightarrow (5)$ Obviously.

$(2)\Rightarrow (4)$ Take $M=\mathcal{C}_X$, it follows immediately.

$(3)\Rightarrow (5)$ Take $M=\mathcal{C}_X$, it follows immediately.

$(5)\Rightarrow (1)$ Suppose there is a filtered colimit system $\{\mathcal{E}_\alpha\}_{\alpha\in I}$ in $\mathcal{C}_X$, s.t. $\phi_*\mathcal{O}_P=\varinjlim\mathcal{E}_{\alpha}$, $f_*\mathcal{E}_\alpha\in\mathcal{C}_S$ and $\mathcal{E}_\alpha$ satisfies base change at $s$ for any $\alpha$. By Proposition~\ref{equivpushforward}, for any $E\in\mathcal{C}_X$, we have $f_*E\in\mathcal{C}_S$. 

Let $E\in\mathcal{C}_X$, denote its corresponding $\pi(\mathcal{C}_X,x)$-representation by $V$. By Lemma~\ref{regularrepresentation}, there exists some positive integer $n$ such that $V\hookrightarrow k[\pi(\mathcal{C}_X,x)]^{\oplus n}$. Then there exists an inclusion 
$$E\hookrightarrow \mathcal{E}_{\alpha_1}\oplus\cdots\oplus \mathcal{E}_{\alpha_n}.$$
where $\alpha_i\in I$ for any $i$. Consider the following exact sequence
$$0\rightarrow E\hookrightarrow \mathcal{E}_{\alpha_1}\oplus\cdots\oplus \mathcal{E}_{\alpha_n}\rightarrow \coker\rightarrow 0.$$
Note that $\coker\in\mathcal{C}_X$, applying Lemma~\ref{regularrepresentation} again, we obtain $\coker\hookrightarrow \mathcal{E}_{\alpha_1'}\oplus\cdots\oplus \mathcal{E}_{\alpha_m'}$ and an exact sequence
$$0\rightarrow E\rightarrow \mathcal{E}_{\alpha_1}\oplus\cdots\oplus \mathcal{E}_{\alpha_n}\rightarrow \mathcal{E}_{\alpha_1'}\oplus\cdots\oplus \mathcal{E}_{\alpha_m'}.$$
where $\alpha'_j\in I$ for any $j$. Applying the left exact functor $f_*$, we obtain an exact sequence
$$0\rightarrow f_*E\rightarrow f_*(\mathcal{E}_{\alpha_1}\oplus\cdots\oplus \mathcal{E}_{\alpha_n})\rightarrow f_*(\mathcal{E}_{\alpha_1'}\oplus\cdots\oplus \mathcal{E}_{\alpha_m'}).$$
Note that $f_*E,f_*(\mathcal{E}_{\alpha_1}\oplus\cdots\oplus \mathcal{E}_{\alpha_n}),f_*(\mathcal{E}_{\alpha_1'}\oplus\cdots\oplus \mathcal{E}_{\alpha_m'})\in\mathcal{C}_S$.  Taking fibre at $s$, we have an exact sequence
$$0\rightarrow f_*E|_s\rightarrow f_*(\mathcal{E}_{\alpha_1}\oplus\cdots\oplus \mathcal{E}_{\alpha_n})|_s\rightarrow f_*(\mathcal{E}_{\alpha_1'}\oplus\cdots\oplus \mathcal{E}_{\alpha_m'})|_s.$$
Since $\mathcal{E}_\alpha$ satisfies base change at $s$, we have the following commutative diagram
\[
    \begin{tikzcd}[sep=tiny]
        &0\arrow[r] & f_*E|_s\arrow[rr]\arrow[ld]\arrow[dd,equal] &&\oplus (f_*\mathcal{E}_{\alpha_i}|_s)\arrow[rr]\arrow[ld,"\cong"]\arrow[dd,"\cong"near start] &&\oplus (f_*\mathcal{E}_{\alpha'_i}|_s)\arrow[ld,"\cong"]\arrow[dd,"\cong"near start]\\
        0\arrow[r]&H^0(X_s, E|_{X_s})\arrow[rr,crossing over]&& \oplus H^0(X_s,\mathcal{E}_{\alpha_i}|_{X_s})\arrow[rr,crossing over]&& \oplus H^0(X_s,\mathcal{E}_{\alpha'_i}|_{X_s})&\\
        &0\arrow[r] & f_*E|_s\arrow[rr]\arrow[ld] &&f_*(\oplus \mathcal{E}_{\alpha_i})|_s\arrow[rr]\arrow[ld,"\cong"] &&f_*(\oplus \mathcal{E}_{\alpha'_i})|_s\arrow[ld,"\cong"]\\
        0\arrow[r]&H^0(X_s, E|_{X_s})\arrow[rr]\arrow[from=uu,equal,bend right]&&H^0(X_s,(\oplus\mathcal{E}_{\alpha_i})|_{X_s})\arrow[rr]\arrow[from=uu,crossing over,"\cong"near start]&&H^0(X_s,(\oplus\mathcal{E}_{\alpha'_i})|_{X_s})\arrow[from=uu,crossing over,"\cong"near start]&
    \end{tikzcd}
\]
Then by five lemma, $f_*E|_s\cong H^0(X_s, E|_{X_s})$.
\end{proof}

Now we state our main result in this section as follows:

\begin{Theorem}\label{Main}
Let $k$ be a field, $f:X\rightarrow S$ a morphism of connected schemes proper over $k$ with $f_*\mathcal{O}_X\cong\mathcal{O}_S$, $x\in X(k)$ lying over $s\in S(k)$, $\mathcal{C}_X,\mathcal{C}_S,\mathcal{C}_{X_s}$ the Tannakian categories over $X,S,X_s$ respectively. Suppose the pullback induces functors $\mathcal{C}_S\xrightarrow{f^*}\mathcal{C}_X\xrightarrow{|_{X_s}}\mathcal{C}_{X_s}$. Then the following conditions are equivalent:
\begin{itemize}
    \item[(1)] \begin{itemize}
            \item[i.] The functor $|_{X_s}:\mathcal{C}_X\rightarrow \mathcal{C}_{X_s}$ is observable;
            \item[ii.] For any $E\in \mathcal{C}_X$, $f_* E\in\mathcal{C}_S$ and $E$ satisfies base change at $s$.
        \end{itemize} 
	\item[(2)]  \begin{itemize}
            \item[i.] The functor $|_{X_s}:\mathcal{C}_X\rightarrow \mathcal{C}_{X_s}$ is observable;
            \item[ii.] For any subset $M\subseteq\mathcal{C}_X$ 
             and its corresponding principal $\pi(\langle M\rangle_{\mathcal{C}_X},x)$-principal bundle $\phi_M: P_M\rightarrow X$, there exists a $($for any$)$ filtered colimit system $\{{\mathcal{E}_M}_{\alpha}\}_{\alpha\in I}$ of subsheaves of ${\phi_M}_*\mathcal{O}_{P_M}$, s.t. ${\phi_M}_*\mathcal{O}_{P_M}=\varinjlim{\mathcal{E}_M}_{\alpha}$ where ${\mathcal{E}_M}_{\alpha}\in \mathcal{C}_X$, $f_* {\mathcal{E}_M}_{\alpha}\in\mathcal{C}_S$ and ${\mathcal{E}_M}_{\alpha}$ satisfies base change at $s$.
                \end{itemize}
    \item[(3)] \begin{itemize}
            \item[i.] The functor $|_{X_s}:\mathcal{C}_X\rightarrow \mathcal{C}_{X_s}$ is observable;
            \item[ii.] Let $\phi: P\rightarrow X$ be the principal $\pi(\mathcal{C}_X,x)$-bundle corresponding to $\mathcal{C}_X$. There exists a $($for any$)$ filtered colimit system $\{\mathcal{E}_\alpha\}_{\alpha\in I}$ of subsheaves of $\phi_*\mathcal{O}_P$, s.t. $\phi_*\mathcal{O}_P=\varinjlim\mathcal{E}_{\alpha}$ where $\mathcal{E}_{\alpha}\in \mathcal{C}_X$, $f_* \mathcal{E}_{\alpha}\in\mathcal{C}_S$ and $\mathcal{E}_{\alpha}$ satisfies base change at $s$.
            \end{itemize}
\end{itemize}
The above equivalent conditions imply the following exact homotopy sequence $$\pi(\mathcal{C}_{X_s},x)\rightarrow\pi(\mathcal{C}_X,x)\rightarrow\pi(\mathcal{C}_S,s)\rightarrow 1$$
Moreover, if we suppose further that $S$ is integral and $f$ is a flat morphism, then the exactness of homotopy sequence also implies the above three equivalent conditions.
\end{Theorem}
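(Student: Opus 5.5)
The equivalence of (1), (2), (3) is immediate from Proposition~\ref{equivbasechange}: condition i is common to all three, and condition ii in (1), (2), (3) is exactly (1), (2)/(3), (4)/(5) of that proposition respectively (the ``there exists'' and ``for any'' versions being (3),(5) resp. (2),(4)). So the real work is to show that (1) implies exactness of $\pi(\mathcal{C}_{X_s},x)\xrightarrow{q}\pi(\mathcal{C}_X,x)\xrightarrow{p}\pi(\mathcal{C}_S,s)\rightarrow 1$, and the converse under the extra hypotheses. I will use Theorem~\ref{Thm1} throughout.

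\emph{Surjectivity of $p$.} I verify (3) of Proposition~\ref{generalsur}. Let $E\in\mathcal{C}_S$ and $F\hookrightarrow f^*E$ in $\mathcal{C}_X$. By (1)ii, $f_*F\in\mathcal{C}_S$ and $F$ satisfies base change at $s$. Triviality of $F|_{X_s}$: $F|_{X_s}\hookrightarrow (f^*E)|_{X_s}=E|_s\otimes\mathcal{O}_{X_s}$ is a subobject of a trivial object in $\mathcal{C}_{X_s}$. Since $f^*f_*F\in\mathcal{C}_X$, Proposition~\ref{MaximalTrivial} gives $f^*f_*F\hookrightarrow F$, and on $X_s$ this is $H^0(X_s,F|_{X_s})\otimes\mathcal{O}_{X_s}\hookrightarrow F|_{X_s}\hookrightarrow E|_s\otimes \mathcal{O}_{X_s}$. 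I expect to conclude triviality by comparing ranks. One route is to argue directly that the cokernel $Q$ of $f^*f_*F\to F$ lies in $\mathcal{C}_X$ and injects into $f^*(E/f_*F)$. Then $f_*Q=0$, because $f_*$ preserves the relevant left-exact sequence and $f_*f^*f_*F=f_*F$. But $f_*Q\hookrightarrow E/f_*F$ contains the images of global sections. The cleanest version: a subobject of a trivial object in a Tannakian category of vector bundles with $H^0=k$ on fibres is trivial iff its $H^0$ has full rank. So I will argue via duals, using that $F^\vee$ is a quotient of $E^\vee|_{X_s}$, to get that the global sections generate. This step is delicate and I will return to it if needed.

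\emph{Exactness in the middle.} I verify (a),(b),(c) of Theorem~\ref{Thm1}(3). Condition (c) is exactly observability by Lemma~\ref{observable}(3). For (b), given $V\in\mathcal{C}_X$, the maximal trivial subobject of $V|_{X_s}$ is $H^0(X_s,V|_{X_s})\otimes\mathcal{O}_{X_s}$. By base change and Proposition~\ref{MaximalTrivial}, $V_0:=f^*f_*V\hookrightarrow V$ restricts to it. For (a), if $V|_{X_s}$ is trivial then $V_0|_{X_s}\cong V|_{X_s}$, so the cokernel, an object of $\mathcal{C}_X$ with zero fibre at $x$, vanishes; hence $V\cong f^*(f_*V)$ with $f_*V\in\mathcal{C}_S$. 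Conversely, $f^*U$ restricts trivially to $X_s$. The composite $q\circ$ restriction factors through a point, which is routine.

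\emph{Converse.} Assume $S$ is integral, $f$ is flat, and the sequence is exact. Observability follows from (c) of Theorem~\ref{Thm1}. For $E\in\mathcal{C}_X$, (b) gives $E_0\hookrightarrow E$ with $E_0|_{X_s}$ the maximal trivial subobject, and (a) gives $E_0\cong f^*U$ with $U\in\mathcal{C}_S$. Then $f_*E_0=U$, and I must show $f_*E_0=f_*E$ together with base change. Here flatness and integrality enter: $f_*E$ is torsion free. Consider the quotient $E/E_0\in\mathcal{C}_X$. The key claim is $f_*(E/E_0)=0$: any section generates a map $\mathcal{O}_S\to f_*(E/E_0)$, giving $\mathcal{O}_X\to E/E_0$ in $\mathcal{C}_X$ whose image is a rank-one trivial piece. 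Its pullback to $E$ would enlarge $E_0$ on $X_s$, unless the map vanishes at $X_s$. But a nonzero morphism from $\mathcal{O}_X$ in $\mathcal{C}_X$ is injective with saturated image, so it does not vanish on fibres. Hence $f_*E=U\in\mathcal{C}_S$, and $f_*E|_s=U|_s=H^0(X_s,E_0|_{X_s})=H^0(X_s,E|_{X_s})$ gives base change. I expect the main obstacle to be the triviality of $F|_{X_s}$ in the surjectivity step.
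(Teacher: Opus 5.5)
Your reduction of (1)$\Leftrightarrow$(2)$\Leftrightarrow$(3) to Proposition~\ref{equivbasechange} follows the paper, and so does your check of (a), (b), (c) of Theorem~\ref{Thm1}(3). The step you single out as the main obstacle, triviality of $F|_{X_s}$, takes one line. Since $|_{X_s}$ is exact, $F|_{X_s}$ is a subobject of $\mathcal{O}_{X_s}^{\oplus r}$ in the neutral Tannakian category $\mathcal{C}_{X_s}\simeq\Rep_k^f(\pi(\mathcal{C}_{X_s},x))$, and a subrepresentation of a trivial representation is trivial. This is exactly how the paper gets condition (b) of Proposition~\ref{generalsur}(3). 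The detours you sketch are unnecessary and not sound as written: the long exact sequence only gives $f_*Q\hookrightarrow R^1f_*\mathcal{O}_X\otimes f_*F$, not $f_*Q=0$.

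The converse has a genuine gap: your key claim $f_*(E/E_0)=0$ is false. Take $S=\mathbb{P}^1_k$, an elliptic curve $Y$ with a rational point, $X=S\times_k Y$, and the unipotent categories throughout; the sequence is exact by Nori's K\"unneth formula. Let $E=p_Y^*\mathcal{A}$, where $\mathcal{A}$ is the non-split extension of $\mathcal{O}_Y$ by $\mathcal{O}_Y$. Then $E_0=\mathcal{O}_X$ and $E/E_0\cong\mathcal{O}_X$, so $f_*(E/E_0)\cong\mathcal{O}_S\neq 0$. The error is in ``its pullback to $E$ would enlarge $E_0$ on $X_s$''. The preimage in $E$ of a copy of $\mathcal{O}_X\subseteq E/E_0$ is only an extension of $\mathcal{O}_X$ by $E_0$. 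Its restriction to $X_s$ is unipotent but need not be trivial; in the example it is all of $E$. Moreover, an argument about morphisms $\mathcal{O}_X\to E/E_0$ only controls global sections of $f_*(E/E_0)$, which says nothing about the sheaf on a non-affine $S$.

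What you actually need is that $f_*E_0\to f_*E$ is an isomorphism, and the paper proves base change first. Since $E_0=f^*U\hookrightarrow E$ factors through $f^*f_*E\to E$ and $H^0(X_s,E_0|_{X_s})\cong H^0(X_s,E|_{X_s})$, the base change map $(f_*E)|_s\to H^0(X_s,E|_{X_s})$ is surjective. By cohomology and base change, which is where flatness enters, it is then an isomorphism, and $f_*E$ is locally free near $s$ of rank $\mathrm{rk}\,U$. Hence $f_*E_0\to f_*E$ is an isomorphism near $s$. Its cokernel embeds in $f_*(E/E_0)$, which is torsion free because $E/E_0$ is locally free, $f$ is flat and $S$ is integral. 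So the cokernel vanishes and $f_*E\cong U\in\mathcal{C}_S$. Torsion-freeness has to be applied to $f_*(E/E_0)$, not $f_*E$, and the cohomology-and-base-change input is missing from your argument.
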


\begin{proof}
The equivalence between condition (1), (2) and (3) follows by Proposition~\ref{equivbasechange}.

$(1)\Rightarrow \mathrm{Exactness}.$
For any $F\in\mathcal{C}_S$ and any subobject $E\hookrightarrow f^*F\in\mathcal{C}_X$, we have $f_*F\in\mathcal{C}_S$ and $F$ satisfies base change at $s$ by ii. Since $E|_{X_s}\hookrightarrow f^*F|_{X_s}\equiv F|_s\otimes_k \mathcal{O}_{X_s}\in\mathcal{C}_{X_s}$ and any subobject of a trivial object is also trivial, $E|_{X_s}$ is trivial. By Proposition~\ref{generalsur}, the homomorphism $\pi(\mathcal{C}_X,x)\rightarrow\pi(\mathcal{C}_S,s)$ is faithfully flat.

Let $E\in\mathcal{C}_X$, then we have $f_*E\in\mathcal{C}_S$ and $E$ satisfies base change at $s$ by ii. Then $f^*f_*E\in\mathcal{C}_X$. By Proposition~\ref{MaximalTrivial}, $f^*f_*E \hookrightarrow E$ is a subobject in $\mathcal{C}_X$. Then
$$f^*f_*E|_{X_s}\equiv f_*E|_s\otimes_k \mathcal{O}_{X_s}\cong H^0(X_s,E|_{X_s})\otimes_k \mathcal{O}_{X_s}$$
is the maximal trivial subobject of $E|_{X_s}$. Hence condition (b) in [Theorem~\ref{Thm1}, (3)] is satisfied.

Suppose $E\in\mathcal{C}_X$ and $E|_{X_s}$ is a trivial object in $\mathcal{C}_{X_s}$. Since $E$ satisfies base change at $s$ by ii, we have $f^*f_*E\hookrightarrow E$ is a subobject in $\mathcal{C}_X$ by Proposition~\ref{MaximalTrivial}. Consider the following exact sequence in $\mathcal{C}_X$:
$$0\rightarrow f^*f_* E\rightarrow E\rightarrow E/f^*f_* E \rightarrow 0.$$
Restricting it to the fibre $X_s$, we obtain an exact sequence in $\mathcal{C}_{X_s}$:
$$0\rightarrow (f^*f_*E)\vert_{X_s}\rightarrow E\vert_{X_s}\rightarrow (E/f^*f_* E)\vert_{X_s}\rightarrow 0.$$
Since $E$ satisfies base change at $s$, we have $$E\vert_{X_s}=H^0(X_s,E\vert_{X_s})\otimes_k\mathcal{O}_{X_s}\cong f^*f_*E|_{X_s}.$$
So $(E/f^*f_* E)\vert_{X_s}=0$. It follows that $E/f^*f_* E=0$. Then $E\cong f^*f_*E$ where $f_*E\in\mathcal{C}_S$. Hence the condition (a) in [Theorem~\ref{Thm1}, (3)] is satisfied. 

The condition (c) in [Theorem~\ref{Thm1}, (3)] is just our condition i by Lemma~\ref{observable}.

Consequently, we have an exact homopoty sequence
$$\pi(\mathcal{C}_{X_s},x)\rightarrow\pi(\mathcal{C}_X,x)\rightarrow\pi(\mathcal{C}_S,s)\rightarrow 1.$$

$\mathrm{Exactness}\Rightarrow (1)$ The condition i follows by [Theorem~\ref{Thm1}, (3)] and Lemma~\ref{observable}.

For any $E\in \mathcal{C}_X$, $H^0(X_s,E\vert_{X_s})\otimes_k\mathcal{O}_{X_s}\hookrightarrow E\vert_{X_s}$ is the maximal trivial subobject. Since the homotopy sequence is exact, by Theorem~\ref{Thm1}, there exists $F\hookrightarrow E$ such that $F\vert_{X_s}=H^0(X_s,E\vert_{X_s})\otimes\mathcal{O}_{X_s}$ and there exists $F_0\in \mathcal{C}_S$ such that $F=f^*F_0$. 

Note that $F=f^*F_0\hookrightarrow E$ factors through $F\rightarrow f^*f_*E\rightarrow E$. 
Restricting to the fibre $X_s$ and taking global sections, we obtain
$$H^0(X_s,F|_{X_s})\rightarrow H^0(X_s,f^*f_*E\vert_{X_s})\rightarrow H^0(X_s,E\vert_{X_s}).$$
Since $H^0(X_s,F\vert_{X_s})\cong H^0(X_s,E\vert_{X_s})$, so the natural homomorphism 
$$(f_*E)\vert_{s}\equiv H^0(X_s,(f^*f_* E)\vert_{X_s})\twoheadrightarrow H^0(X_s,E\vert_{X_s})$$
is surjective. Then by \cite[Theorem 25.1.6]{Vak25}, we have 
$$(f_*E)\vert_{s}\cong H^0(X_s,E\vert_{X_s}).$$
Hence $E$ satisfies base change at $s$.

By Proposition~\ref{generalsur}, $F$ satisfies base change at any $t\in S$. Since $S$ is integral, there exists an open neighborhood $U$ of $s$ such that $f_*E$ and $f_*F$ are vector bundle on $U$ of the same rank accoring to \cite[Theorem 25.1.6]{Vak25}. Then $F\hookrightarrow E$ induces an isomorphism between $f_*F$ and $f_* E$ on $U$. Applying the left exact functor $f_*$ to the exact sequence $0\rightarrow F\rightarrow E\rightarrow E/F\rightarrow 0$ in $\mathcal{C}_X$, we obtain
$$0\rightarrow f_*F\rightarrow f_*E\rightarrow f_*(E/F).$$
Consider the following commutative diagram
\[\begin{tikzcd}
    0\arrow[r]&f_*F\arrow[r]&f_*E\arrow[r]\arrow[rd]&f_*(E/F)\\
    &&&\coker\arrow[u,hook]
\end{tikzcd}\]
Since $E/F\in\mathcal{C}_X$ is a vector bundle, $S$ is integral and $f$ is flat, we have $f_*(E/F)$ is torsion free on $S$. So $\coker$ is also torsion free on $S$. But $f_*F$ and $f_*E$ is isomorphic on $U$, it follows that $\coker|_{U}=0$. Therefore $\coker=0$ on $S$. Then $f_*F\cong f_*E$ on $X$. Hence $$f_*E\cong f_*F=f_*f^*F_0\cong f_*\mathcal{O}_X\otimes_{\mathcal{O}_{S}}F_0\cong F_0\in\mathcal{C}_S.$$
\end{proof}

\section{The K\"{u}nneth Formula of Fundamental Group Schemes}

\begin{Lemma}\label{split}
    Let $k$ be a field, $A\xrightarrow{\alpha} B\xrightarrow{\beta} C$ a sequence of affine group schemes over $k$. Then the following conditions are equivalent:
    \begin{enumerate}
        \item \begin{enumerate}
            \item The sequence $A\xrightarrow{\alpha} B\xrightarrow{\beta} C$ is exact at $B$.
            \item There exist homomorphisms $\sigma:B\rightarrow A$ and $\tau:C\rightarrow B$ such that $\sigma\circ \alpha =\Id_A$ and $\beta\circ \tau =\Id_C$.
        \end{enumerate}
        \item There exists an isomorphism $\varphi:B\rightarrow A\times_k C$ such that $\varphi\circ \alpha=i_A$ and $p_C\circ \varphi =\beta$, where $i_A:A\rightarrow A\times_k C$ is the natural embedding and $p_C:A\times_k C\rightarrow C$ is the natural projection.
    \end{enumerate}
\end{Lemma}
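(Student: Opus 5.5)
The plan is to work functorially on $R$-points for $k$-algebras $R$, using Yoneda: a morphism of affine $k$-group schemes that is bijective on $R$-points for every $R$ is an isomorphism. Throughout, I read ``exact at $B$'' as saying that $\beta\circ\alpha$ is trivial and that $\alpha$ identifies $A$ with the closed subgroup scheme $\ker\beta$.

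$(2)\Rightarrow(1)$. Given $\varphi$, set $\sigma:=p_A\circ\varphi$ and $\tau:=\varphi^{-1}\circ i_C$, where $i_C:C\to A\times_k C$ is the natural embedding. Then
$$\sigma\circ\alpha=p_A\circ i_A=\Id_A,\qquad \beta\circ\tau=p_C\circ\varphi\circ\varphi^{-1}\circ i_C=\Id_C.$$
For exactness, note that $\ker p_C=A\times 1=i_A(A)$. Transporting this through the isomorphism $\varphi$ gives $\ker\beta=\varphi^{-1}(\ker p_C)=\alpha(A)$. Since $i_A$ is a closed immersion, so is $\alpha=\varphi^{-1}\circ i_A$.

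$(1)\Rightarrow(2)$. First, $\alpha$ is a closed immersion: the composite $k[A]\to k[B]\to k[A]$ induced by $\sigma\circ\alpha=\Id_A$ is the identity, so $\alpha^\#:k[B]\to k[A]$ is surjective. Consequently $\ker\beta=\alpha(A)$ has $R$-points exactly $\alpha(A(R))$ for every $R$.

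Now define $\varphi:=(\sigma,\beta):B\to A\times_k C$. It is a homomorphism because both components are.
\begin{itemize}
\item We have $\varphi\circ\alpha=(\sigma\alpha,\beta\alpha)=(\Id_A,1)=i_A$, using that $\beta\circ\alpha$ is trivial by exactness.
\item We have $p_C\circ\varphi=\beta$ by construction.
\end{itemize}

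Injectivity on $R$-points. Suppose $b\in B(R)$ satisfies $\sigma(b)=1$ and $\beta(b)=1$. Then $b\in\ker\beta(R)=\alpha(A(R))$, so $b=\alpha(a)$ for some $a\in A(R)$. Hence $a=\sigma\alpha(a)=\sigma(b)=1$, and so $b=1$.

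Surjectivity on $R$-points. Given $(a,c)\in A(R)\times C(R)$, put
$$b:=\alpha(a)\,\tau(c)\,\alpha\bigl(\sigma(\tau(c))\bigr)^{-1}.$$
Then $\sigma(b)=a\cdot\sigma\tau(c)\cdot\sigma\tau(c)^{-1}=a$. Also $\beta(b)=\beta\tau(c)=c$, because $\beta\circ\alpha$ is trivial.

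Thus $\varphi$ is bijective on $R$-points for every $R$, hence an isomorphism of functors, hence of affine group schemes.

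The main point needing care is the identification $\ker\beta(R)=\alpha(A(R))$. In general, the image of a homomorphism of group schemes is only an fppf sheaf image, and its $R$-points can be larger than the image of $A(R)$. Here this problem disappears because $\alpha$ has the retraction $\sigma$ and is therefore a closed immersion. So the argument needs no flatness input and no appeal to quotient theory.
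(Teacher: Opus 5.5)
Your proof is correct and follows essentially the same route as the paper: both set $\varphi=(\sigma,\beta)$, use the retraction $\sigma$ to see that $\alpha$ is a closed immersion, and check that $\varphi$ is bijective on $R$-points for every $k$-algebra $R$, with the section $\tau$ supplying surjectivity onto $C(R)$. Your explicit injectivity and surjectivity computations unpack the paper's diagram chase (the short five lemma), and you make explicit two points the paper leaves implicit: $\varphi\circ\alpha=i_A$ uses $\beta\circ\alpha=1$, and $(\ker\beta)(R)=\alpha(A(R))$ relies on $\alpha$ being a closed immersion.
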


\begin{proof}
    $(1)\Rightarrow (2)$ Since $\sigma\circ \alpha =\Id_A$ and $\beta\circ \tau =\Id_C$, we have $\alpha$ is a closed immersion and $\beta$ is faithfully flat. Then we define a morphism $\varphi:B\rightarrow A\times_k C$ by $\varphi=(\sigma, \beta)$. Consider the following diagram:
    \[\begin{tikzcd}
        1\arrow[r]&A\arrow[r,shift left=2pt,harpoon,"\alpha"]\arrow[d,"\Id_A"] & B\arrow[l,shift left=2pt,harpoon,"\sigma"] \arrow[d,"\varphi"] \arrow[r,shift left=2pt,harpoon,"\beta"]&  C \arrow[l,shift left=2pt,harpoon,"\tau"]\arrow[r]\arrow[d,"\Id_C"]&1\\
        1\arrow[r]&A\arrow[r,shift left=2pt,harpoon,"i_A"]&A\times_k C \arrow[l,shift left=2pt,harpoon,"p_A"]\arrow[r,shift left=2pt,harpoon,"p_C"]&C\arrow[r]\arrow[l,shift left=2pt,harpoon,"i_C"]&1
    \end{tikzcd}\]
    where $i_A$ and $i_C$ are the natural inclusions, i.e. for any $k$-algebra $R$, $i_{A(R)}:A(R)\rightarrow (A\times_k C)(R)$ sends $a_R$ to $(a_R,e_{C(R)})$, $i_{C(R)}:C(R)\rightarrow (A\times_k C)(R)$ sends $c_R$ to $(e_{A(R)},c_R)$, $p_A$ and $p_C$ are the natural projections. For any $k$-algebra $R$, we have the commutative diagram of exact sequences of groups by \cite{Mil12},
    \[\begin{tikzcd}
        1\arrow[r]&A(R)\arrow[r,shift left=2pt,harpoon,"{\alpha}_R"]\arrow[d,"\Id_{A(R)}"] & B(R)\arrow[r,shift left=2pt,harpoon,"\beta_R"]\arrow[l,shift left=2pt,harpoon,"{\sigma_R}"] \arrow[d,"{\varphi}_R"] &  C(R) \arrow[l,shift left=2pt,harpoon,"\tau_R"]\arrow[d,"\Id_{C(R)}"]\arrow[r] &1\\
        1\arrow[r]&A(R)\arrow[r,shift left=2pt,harpoon,"i_{A(R)}"]&(A\times_k C)(R)\arrow[l,shift left=2pt,harpoon,"p_{A(R)}"]\arrow[r,shift left=2pt,harpoon,"p_{C(R)}"] &C(R)\arrow[l,shift left=2pt,harpoon,"i_{C(R)}"]\arrow[r]&1
    \end{tikzcd}\]
    where the surjection of $\beta_R:B(R)\rightarrow C(R)$ is preserved by $\beta_R\circ \tau_R=\Id_{C(R)}$.Then by diagram chasing, $\varphi_R: B(R)\rightarrow (A\times_k C)(R)$ is an isomorphism for any $k$-algebra $R$. Hence $\varphi:B\rightarrow A\times_k C$ is an isomorphism.
    
    $(2)\Rightarrow (1)$ Suppose there exists an isomorphism $\varphi:B\rightarrow A\times_k C$ with $\varphi\circ \alpha=i_A$ and $p_C\circ \varphi =\beta$. Obviously, the sequence $1\rightarrow A\xrightarrow{\alpha} B\xrightarrow{\beta} C\rightarrow 1$ is exact. Set $\sigma:=p_A\circ \varphi$ and $\tau:=\varphi^{-1}\circ i_C$. Then $\sigma\circ \alpha=p_A\circ \varphi \circ \alpha=p_A\circ i_A=\Id_A$ and $\beta\circ \tau=\beta\circ\varphi^{-1}\circ i_C=p_C\circ \varphi \circ \varphi^{-1}\circ i_C=p_C\circ i_C=\Id_C.$
\end{proof}

The following Theorem is the main result of this section.
\begin{Theorem}\label{kunnethMain}
    Let $k$ be a field, $X$ and $Y$ connected schemes proper over $k$, $x_0\in X(k)$, $y_0\in Y(k)$, $p_X: X\times_k Y\rightarrow X$ and $p_Y:X\times_k Y\rightarrow Y$ the projections, $i_X: X\rightarrow X\times_k Y$ send $x$ to $(x,y_0)$ and $i_Y: Y\rightarrow X\times_k Y$ send $y$ to $(x_0,y)$, $\mathcal{C}_X,\mathcal{C}_Y,\mathcal{C}_{X\times_k Y}$ the Tannakian categories over $X$, $Y$, $X\times_k Y$ respectively. Suppose pullback induce functors among Tannakian categories
$$\begin{aligned}
    \begin{tikzcd}
	Y\arrow[r,shift left=2pt,harpoon,"i_Y"]\arrow[d]&X\times_k Y\arrow[d,shift left=2pt,harpoon,"p_X"]\arrow[l,shift left=2pt,harpoon,"p_Y"]\\
	\Spec k\arrow[r]&X\arrow[u,shift left=2pt,harpoon,"i_X"]
    \end{tikzcd}&\quad\quad&\begin{tikzcd}
	\mathcal{C}_Y\arrow[r,shift left=2pt,harpoon,"p_Y^*"]&\mathcal{C}_{X\times_k Y}\arrow[d,shift left=2pt,harpoon,"i_X^*"]\arrow[l,shift left=2pt,harpoon,"i_Y^*"]\\
	&\mathcal{C}_X\arrow[u,shift left=2pt,harpoon,"p_X^*"]
    \end{tikzcd}\end{aligned}.$$
Then the following five conditions are equivalent:
\begin{enumerate}
    \item For any $E\in \mathcal{C}_{X\times_k Y}$, ${p_X}_*E\in\mathcal{C}_X$ and $E$ satisfies base change at $x_0$.
    \item For any $E\in \mathcal{C}_{X\times_k Y}$, ${p_Y}_*E\in\mathcal{C}_Y$ and $E$ satisfies base change at $y_0$.
    \item The homotopy sequence $1\rightarrow \pi(\mathcal{C}_Y,y_0)\rightarrow \pi(\mathcal{C}_{X\times_k Y},(x_0,y_0))\rightarrow \pi(\mathcal{C}_X,x_0)\rightarrow 1$ is exact.
    \item The homotopy sequence $1\rightarrow \pi(\mathcal{C}_X,x_0)\rightarrow \pi(\mathcal{C}_{X\times_k Y},(x_0,y_0))\rightarrow \pi(\mathcal{C}_Y,y_0)\rightarrow 1$ is exact.
    \item The natural homomorphism $\pi(\mathcal{C}_{X\times_k Y},(x_0,y_0))\rightarrow \pi(\mathcal{C}_X,x_0)\times_k \pi(\mathcal{C}_Y,y_0)$ is an isomorphism.
\end{enumerate}
\end{Theorem}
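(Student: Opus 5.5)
The plan is to apply Theorem~\ref{Main} to $f=p_X:X\times_k Y\to X$, whose fibre over $x_0$ is $\{x_0\}\times Y\cong Y$, embedded via $i_Y$. By symmetry, the same argument for $p_Y$ (fibre over $y_0$ identified with $X$ via $i_X$) will give $(2)\Leftrightarrow(4)$. The proof then reduces to three steps: $(1)\Leftrightarrow(3)$, $(3)\Leftrightarrow(5)$, and symmetry. First I check the hypotheses of Theorem~\ref{Main}. Here $p_X$ is flat and proper, and since $Y$ is proper and connected over $k$ with a rational point we have $H^0(Y,\mathcal{O}_Y)=k$ (we may need this as a standing assumption), so flat base change gives ${p_X}_*\mathcal{O}_{X\times_k Y}\cong \mathcal{O}_X\otimes_k H^0(Y,\mathcal{O}_Y)\cong\mathcal{O}_X$. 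The functors $p_X^*$ and $i_Y^*$ are exactly the pullbacks assumed.

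The key new input is the pair of sections. Since $p_X\circ i_X=\Id_X$ and $p_Y\circ i_Y=\Id_Y$, functoriality of Tannaka groups gives homomorphisms $\tau:\pi(\mathcal{C}_X,x_0)\to\pi(\mathcal{C}_{X\times_kY},(x_0,y_0))$ induced by $i_X^*$ and $\sigma:\pi(\mathcal{C}_{X\times_kY},(x_0,y_0))\to\pi(\mathcal{C}_Y,y_0)$ induced by $p_Y^*$. These satisfy $(p_X)_\sharp\circ\tau=\Id$ and $\sigma\circ(i_Y)_\sharp=\Id$, because $i_X^*p_X^*\cong\Id$ and $i_Y^*p_Y^*\cong\Id$ compatibly with the fibre functors at $x_0$ and $y_0$. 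Consequently $\pi(\mathcal{C}_Y,y_0)\to\pi(\mathcal{C}_{X\times_kY})$ is a closed immersion, $\pi(\mathcal{C}_{X\times_k Y})\to\pi(\mathcal{C}_X,x_0)$ is faithfully flat, and by Theorem~\ref{Thm1}(2) the functor $i_Y^*$ has every object of $\mathcal{C}_Y$ as a subquotient of an image; in fact every object $G$ equals $i_Y^*p_Y^*G$. This makes observability of $|_{X_{x_0}}=i_Y^*$ automatic: any quotient $E'$ of $i_Y^*E$ lies in $\mathcal{C}_Y$ and embeds in (equals) $i_Y^*(p_Y^*E')$. By Lemma~\ref{observable}(3), condition i of Theorem~\ref{Main} holds.

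With i automatic, Theorem~\ref{Main} gives $(1)\Rightarrow$ exactness of $\pi(\mathcal{C}_Y)\to\pi(\mathcal{C}_{X\times Y})\to\pi(\mathcal{C}_X)\to1$. Adding left injectivity from $\sigma$ gives (3). Conversely, (3) implies (1) by the ``moreover'' clause, which requires $S=X$ integral. This is the main obstacle, since $X$ is only assumed connected. I would first try to bypass integrality by redoing the last step of the proof of Theorem~\ref{Main} directly. Exactness produces $F=p_X^*F_0\hookrightarrow E$ restricting on $Y$ to the maximal trivial subobject, and base change at $x_0$ follows as in that proof. To get ${p_X}_*E\cong F_0$, I would use that $X\times_kY\to X$ is a trivial family: either move the base point, using that $F_0$ and $E$ are compatible along every fibre $\{x\}\times Y$ via the section, or apply flat base change with Künneth on $H^0$. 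If that fails, I would add the hypothesis that $X$ is integral (reduced) as needed.

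Finally, for $(3)\Leftrightarrow(5)$ I apply Lemma~\ref{split} with $A=\pi(\mathcal{C}_Y)$, $B=\pi(\mathcal{C}_{X\times Y})$, $C=\pi(\mathcal{C}_X)$ and the sections $\sigma,\tau$ above. The resulting $\varphi=(\sigma,(p_X)_\sharp)$ is exactly the natural homomorphism to $\pi(\mathcal{C}_X)\times_k\pi(\mathcal{C}_Y)$ up to swapping factors. The symmetric argument gives $(4)\Leftrightarrow(5)$ and $(2)\Leftrightarrow(4)$, completing the cycle.
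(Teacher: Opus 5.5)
\textbf{Gap in $(3)\Rightarrow(1)$.} Your arguments for $(1)\Rightarrow(3)$ and for $(3)\Leftrightarrow(5)$ are fine and match the paper. For $(1)\Rightarrow(3)$ you get observability of $i_Y^*$ from $E'\cong i_Y^*p_Y^*E'$, injectivity from the retraction, and then apply Theorem~\ref{Main}. For $(3)\Leftrightarrow(5)$ you use Lemma~\ref{split} with the sections.

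The problem is the converse $(3)\Rightarrow(1)$, and symmetrically $(4)\Rightarrow(2)$. The ``moreover'' clause of Theorem~\ref{Main} needs $S=X$ integral, which the statement does not assume. Adding that hypothesis would prove a weaker theorem.

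Your proposed bypass does not close the gap. Exactness does give $F=p_X^*F_0\hookrightarrow E$ and base change at $x_0$; that part of the proof of Theorem~\ref{Main} uses no integrality. What remains is showing ${p_X}_*F\to{p_X}_*E$ is a global isomorphism, and that is precisely where integrality and torsion-freeness enter. ``Moving the base point'' is not available: the categories are neutralized only at $x_0$, and (3) says nothing at other points. ``Flat base change with K\"unneth on $H^0$'' only computes ${p_X}_*$ on objects of the form $p_X^*A\otimes p_Y^*B$, and you give no reduction of a general $E$ to such objects.

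\textbf{The missing step.} That reduction comes from (5), which you already obtain from (3) via Lemma~\ref{split}. The paper proves $(5)\Rightarrow(1)$ directly. Write $G_X=\pi(\mathcal{C}_X,x_0)$ and $G_Y=\pi(\mathcal{C}_Y,y_0)$.
\begin{enumerate}
\item Every finite-dimensional representation of $G_X\times_k G_Y$ embeds into $(k[G_X]\otimes_k k[G_Y])^{\oplus n}$. It therefore embeds into $W_X\otimes_k W_Y^{\oplus n}$ for finite-dimensional subrepresentations $W_X,W_Y$. So every $E\in\mathcal{C}_{X\times_k Y}$ embeds into some $p_X^*E_X\otimes p_Y^*E_Y$.
\item Doing the same for the cokernel gives an exact sequence $0\to E\to p_X^*E_X\otimes p_Y^*E_Y\to p_X^*E'_X\otimes p_Y^*E'_Y$ in $\mathcal{C}_{X\times_k Y}$.
\item Apply ${p_X}_*$. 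By the projection formula and flat base change, ${p_X}_*(p_X^*A\otimes p_Y^*B)\cong A\otimes_k H^0(Y,B)\in\mathcal{C}_X$. Hence ${p_X}_*E$ is the kernel of a morphism in $\mathcal{C}_X$ and lies in $\mathcal{C}_X$.
\item Compare the fibre at $x_0$ of this left exact sequence with $H^0(Y,i_Y^*(-))$ of the restricted sequence. The two right-hand vertical maps are isomorphisms, which gives base change at $x_0$.
\end{enumerate}
This never uses the converse of Theorem~\ref{Main}, so no integrality is needed. Organize the proof as $(1)\Rightarrow(3)\Rightarrow(5)\Rightarrow(1)$ together with the symmetric cycle through (2) and (4).

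\textbf{Minor point.} $H^0(Y,\mathcal{O}_Y)=k$ does not follow from $Y$ being connected, proper and having a rational point; $\Spec k[\epsilon]/(\epsilon^2)$ is a counterexample. It is, however, forced by $\mathcal{O}_Y$ being the unit of a neutral Tannakian category, since $\mathrm{End}(\mathcal{O}_Y)=k$. So your standing assumption is harmless.
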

\begin{proof}
    $(1)\Rightarrow(3)$ For any $E\in \mathcal{C}_{X\times_k Y}$, we have  ${p_X}_*E\in\mathcal{C}_X$ and $E$ satisfies base change at $x_0$. Let $E'\in\mathcal{C}_{Y}$. Note that $\Id_Y=p_Y\circ i_Y$, so $E'\cong i_Y^*p_Y^* E'$. Then the functor $i_Y^*$ is observable by Lemma~\ref{observable}, and the homomorphism $\pi(\mathcal{C}_Y,y_0)\rightarrow \pi(\mathcal{C}_{X\times_k Y},(x_0,y_0))$ induced by $i_Y^*$ is a closed immersion by [Theorem~\ref{Thm1}, (2)]. Hence, by Theorem~\ref{Main}, we have an exact homotopy sequence
    $$1\rightarrow \pi(\mathcal{C}_Y,y_0)\rightarrow \pi(\mathcal{C}_{X\times_k Y},(x_0,y_0))\rightarrow \pi(\mathcal{C}_X,x_0)\rightarrow 1.$$

    $(3)\Rightarrow (5)$ Suppose we have an exact homotopy sequence 
    $$1\rightarrow \pi(\mathcal{C}_Y,y_0)\rightarrow \pi(\mathcal{C}_{X\times_k Y},(x_0,y_0))\rightarrow \pi(\mathcal{C}_X,x_0)\rightarrow 1.$$
    Since $p_Y\circ i_Y= \Id_Y$, so $i_Y^*\circ p_Y^*=\Id_{\mathcal{C}_Y}$. Then the homomorphism
    $$\pi(\mathcal{C}_Y,y_0)\rightarrow \pi(\mathcal{C}_{X\times_k Y},(x_0,y_0))\rightarrow \pi(\mathcal{C}_Y,y_0)$$
    corresponding to $i_Y^*\circ p_Y^*$ is $\Id_{\pi(\mathcal{C}_Y,y_0)}$. Similarly, the homomorphism
    $$\pi(\mathcal{C}_X,x_0)\rightarrow \pi(\mathcal{C}_{X\times_k Y},(x_0,y_0))\rightarrow \pi(\mathcal{C}_X,x_0)$$ 
    corresponding to $i_X^*\circ p_X^*$ is $\Id_{\pi(\mathcal{C}_X,x_0)}$. By Lemma~\ref{split}, we have a natural isomorphism $$\pi(\mathcal{C}_{X\times_k Y},(x_0,y_0))\xrightarrow{\cong} \pi(\mathcal{C}_X,x_0)\times_k \pi(\mathcal{C}_Y,y_0).$$

    $(5)\Rightarrow (1)$ Let $E\in\mathcal{C}_{X\times_k Y}$. By the isomorphism 
    $$\pi(\mathcal{C}_{X\times_k Y},(x_0,y_0))\xrightarrow{\cong} \pi(\mathcal{C}_X,x_0)\times_k \pi(\mathcal{C}_Y,y_0),$$ 
    there exists $E_X\in \mathcal{C}_X$ and $E_Y\in\mathcal{C}_Y$ such that $E\hookrightarrow p_X^*E_X\otimes_{\mathcal{O}_{X\times_k Y}}p_Y^*E_Y$. Consider the exact sequence
    $$0\rightarrow E\rightarrow p_X^*E_X\otimes_{\mathcal{O}_{X\times_k Y}}p_Y^*E_Y\rightarrow \coker\rightarrow 0.$$
    Since $\coker\in\mathcal{C}_{X\times_k Y}$, the same discussion applied to $\coker$, there exists $E'_X\in \mathcal{C}_X$ and $E'_Y\in\mathcal{C}_Y$ such that $\coker\hookrightarrow p_X^*E'_X\otimes_{\mathcal{O}_{X\times_k Y}}p_Y^*E'_Y$.
    Then we have an exact sequence in $\mathcal{C}_{X\times_k Y}$:
    $$0\rightarrow E\rightarrow p_X^*E_X\otimes_{\mathcal{O}_{X\times_k Y}}p_Y^*E_Y\rightarrow p_X^*E'_X\otimes_{\mathcal{O}_{X\times_k Y}}p_Y^*E'_Y.$$
    Applying the left exact functor ${p_X}_*$, and note that 
    $$\begin{aligned}
        {p_X}_*(p_X^*E_X\otimes_{\mathcal{O}_{X\times_k Y}}p_Y^*E_Y)&\cong E_X\otimes_{\mathcal{O}_X}{p_X}_*p_Y^*E_Y\cong E_X^{\oplus h^0(Y,E_Y)},\\
        {p_X}_*(p_X^*E'_X\otimes_{\mathcal{O}_{X\times_k Y}}p_Y^*E'_Y)&\cong E'_X\otimes_{\mathcal{O}_X}{p_X}_*p_Y^*E'_Y\cong {E'}_X^{\oplus h^0(Y,E'_Y)}.
    \end{aligned}$$
    Then we have the following exact sequence in $\mathcal{C}_X$:
    $$0\rightarrow {p_X}_*E\rightarrow E_X^{\oplus h^0(Y,E_Y)}\rightarrow {E'_X}^{\oplus h^0(Y,E'_Y)},$$
    which implies that $p_*E\in\mathcal{C}_X$. Applying the functor $|_{x_0}$ to the above exact sequence, we have the following commutative diagram
    \[
        \begin{tikzcd}
            0\arrow[r]&({p_X}_*E)|_{x_0}\arrow[r]\arrow[d]&(E_X|_{x_0})^{\oplus h^0(Y,E_Y)}\arrow[r]\arrow[d,"\cong"]&({{E'}_X}|_{x_0})^{\oplus h^0(Y,E'_Y)}\arrow[d,"\cong"]\\
            0\arrow[r]&H^0(Y,i_Y^*E)\arrow[r]&H^0(Y,E_X|_{x_0}\otimes_k E_Y)\arrow[r]&H^0(Y,E'_X|_{x_0}\otimes_k E'_Y)
        \end{tikzcd}
    \]
    It follows that $({p_X}_*E)|_{x_0}\cong H^0(Y,i_Y^*E)$, i.e. $E$ satisfies base change at $x_0$.
    
    By symmetry of $X$ and $Y$, the proof of $(2)\Rightarrow (4)\Rightarrow (5)\Rightarrow (2)$ is similar.
    \end{proof}

\begin{Proposition}\label{kunnethdescent}
    Let $k$ be a field, $X$ and $Y$ connected schemes proper over $k$, $x_0\in X(k)$, $y_0\in Y(k)$, $p_X: X\times_k Y\rightarrow X$ and $p_Y:X\times_k Y\rightarrow Y$ the projections, $i_X: X\rightarrow X\times_k Y$ send $x$ to $(x,y_0)$ and $i_Y: Y\rightarrow X\times_k Y$ send $y$ to $(x_0,y)$, $\mathcal{C}'_X,\mathcal{C}'_Y,\mathcal{C}'_{X\times_k Y}$ the Tannakian categories over $X$, $Y$, $X\times_k Y$ respectively, $\mathcal{C}_X\subseteq \mathcal{C}'_X,\mathcal{C}_Y\subseteq \mathcal{C}'_Y,\mathcal{C}_{X\times_k Y}\subseteq \mathcal{C}'_{X\times_k Y}$ the Tannakian subcategories. Suppose pullback induce functors among Tannakian categories
$$\begin{aligned}
    \begin{tikzcd}
	Y\arrow[r,shift left=2pt,harpoon,"i_Y"]\arrow[d]&X\times_k Y\arrow[d,shift left=2pt,harpoon,"p_X"]\arrow[l,shift left=2pt,harpoon,"p_Y"]\\
	\Spec k\arrow[r]&X\arrow[u,shift left=2pt,harpoon,"i_X"]
    \end{tikzcd}&\quad\quad&\begin{tikzcd}
	\mathcal{C}'_Y\arrow[r,shift left=2pt,harpoon,"p_Y^*"]&\mathcal{C}'_{X\times_k Y}\arrow[d,shift left=2pt,harpoon,"i_X^*"]\arrow[l,shift left=2pt,harpoon,"i_Y^*"]\\
	&\mathcal{C}'_X\arrow[u,shift left=2pt,harpoon,"p_X^*"]
    \end{tikzcd}&\quad\quad&\begin{tikzcd}
	\mathcal{C}_Y\arrow[r,shift left=2pt,harpoon,"p_Y^*"]&\mathcal{C}_{X\times_k Y}\arrow[d,shift left=2pt,harpoon,"i_X^*"]\arrow[l,shift left=2pt,harpoon,"i_Y^*"]\\
	&\mathcal{C}_X\arrow[u,shift left=2pt,harpoon,"p_X^*"]
    \end{tikzcd}\end{aligned}.$$
    If the natural homomorphism $\pi(\mathcal{C}'_{X\times_k Y},(x_0,y_0))\rightarrow \pi(\mathcal{C}'_X,x_0)\times_k \pi(\mathcal{C}'_Y,y_0)$ is an isomorphism, then the natural homomorphism $\pi(\mathcal{C}_{X\times_k Y},(x_0,y_0))\rightarrow \pi(\mathcal{C}_X,x_0)\times_k \pi(\mathcal{C}_Y,y_0)$ is an isomorphism.
\end{Proposition}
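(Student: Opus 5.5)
The plan is to work entirely on the group side: use the given isomorphism for the primed categories, and then check surjectivity and closed immersion of
$$\varphi:\pi(\mathcal{C}_{X\times_k Y},(x_0,y_0))\rightarrow \pi(\mathcal{C}_X,x_0)\times_k \pi(\mathcal{C}_Y,y_0)$$
separately by Theorem~\ref{Thm1}. Set $G'=\pi(\mathcal{C}'_{X},x_0)$ and $H'=\pi(\mathcal{C}'_{Y},y_0)$. Since the unprimed categories are Tannakian subcategories (full, closed under subquotients, tensor and duals), Theorem~\ref{Thm1}(1) gives faithfully flat quotients
$$\pi(\mathcal{C}'_{X\times_k Y})\twoheadrightarrow \pi(\mathcal{C}_{X\times_k Y}),\qquad G'\twoheadrightarrow \pi(\mathcal{C}_X,x_0),\qquad H'\twoheadrightarrow \pi(\mathcal{C}_Y,y_0).$$
Because all functors are pullbacks compatible with the inclusions, these form a commutative square with $\varphi'$ and $\varphi$.

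\emph{Surjectivity.} The composite $\pi(\mathcal{C}'_{X\times_k Y})\xrightarrow{\varphi'} G'\times_k H'\twoheadrightarrow \pi(\mathcal{C}_X)\times_k\pi(\mathcal{C}_Y)$ is faithfully flat, since $\varphi'$ is an isomorphism and a product of faithfully flat quotients is faithfully flat. By commutativity this composite factors through $\varphi$. Hence $\varphi$ is faithfully flat.

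\emph{Closed immersion.} By Theorem~\ref{Thm1}(2), it suffices to show that every $E\in\mathcal{C}_{X\times_k Y}$ is a subquotient of some $p_X^*E_X\otimes p_Y^*E_Y$ with $E_X\in\mathcal{C}_X$ and $E_Y\in\mathcal{C}_Y$. View $E$ as a representation $V$ of $G'\times_k H'$ via $\varphi'$. Under this identification, the restriction $V|_{G'}$ corresponds to $i_X^*E$ and $V|_{H'}$ corresponds to $i_Y^*E$, because the factor inclusions $G'\hookrightarrow G'\times H'$ and $H'\hookrightarrow G'\times H'$ are induced by $i_X^*$ and $i_Y^*$. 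Both lie in $\mathcal{C}_X$ and $\mathcal{C}_Y$ respectively, by the hypothesis that pullbacks preserve the unprimed categories.

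The key algebraic step is the following. The $H'$-coaction $\rho_H:V\to V\otimes k[H']$ is injective. It is $G'\times H'$-equivariant when $G'$ acts on the first factor and $H'$ acts on $k[H']$ by the regular representation; this holds because the two factors commute. Its image lies in $V|_{G'}\otimes C(V)$, where $C(V)\subseteq k[H']$ is the coefficient space of $V|_{H'}$. As a left $H'$-representation, $C(V)$ embeds into $(V|_{H'})^{\oplus \dim V}$, via the matrix-coefficient map. Hence
$$V\hookrightarrow V|_{G'}\otimes (V|_{H'})^{\oplus \dim V},$$
that is, $E\hookrightarrow p_X^*i_X^*E\otimes p_Y^*\big((i_Y^*E)^{\oplus n}\big)$ in $\mathcal{C}'_{X\times_k Y}$. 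Since $\mathcal{C}_{X\times_k Y}$ is full in $\mathcal{C}'_{X\times_k Y}$, this is an embedding in $\mathcal{C}_{X\times_k Y}$, and it has the required form. Theorem~\ref{Thm1}(2) then shows that $\varphi$ is a closed immersion. Combined with faithful flatness, $\varphi$ is an isomorphism.

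The main obstacle is this embedding step. Three points must be checked: that the coaction map is genuinely $G'\times H'$-equivariant, that $C(V)$ sits inside a sum of copies of $V|_{H'}$ with the correct (left regular) action, and that the identifications $V|_{G'}\leftrightarrow i_X^*E$ and $V|_{H'}\leftrightarrow i_Y^*E$ are compatible with $\varphi'$. If the coefficient-space argument becomes awkward, I would instead use Lemma~\ref{split}. The sections given by $p_X^*, p_Y^*$ and the retractions $i_X^*, i_Y^*$ descend to the unprimed groups. One would then verify exactness at the middle by pushing the exact sequence $1\to H'\to\pi(\mathcal{C}'_{X\times Y})\to G'\to 1$ down through the quotient maps.
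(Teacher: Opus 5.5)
Your strategy works and differs from the paper's route, but one step in the closed-immersion part is false as written. The matrix-coefficient map does not embed $C(V)$ into copies of $V|_{H'}$; it goes the other way. Let $H'$ act on $k[H']$ by $(h\cdot f)(x)=f(xh)$, which is the action making $\rho_H$ equivariant. Then $\lambda\otimes w\mapsto (x\mapsto\lambda(xw))$ is an $H'$-equivariant surjection $(V|_{H'})^{\oplus\dim V}\cong V^{\vee}\otimes V\twoheadrightarrow C(V)$, with $H'$ acting only on the second factor. In general $C(V)$ is not a subobject of any $(V|_{H'})^{\oplus m}$.

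For example, take the Borel subgroup $B\subseteq GL_2$ acting on $V=k^2$. Write $h=\bigl(\begin{smallmatrix}\alpha&\beta\\0&\delta\end{smallmatrix}\bigr)$ and let $a,b,d$ be the matrix-entry functions. Then $h\cdot a=\alpha a$, $h\cdot b=\beta a+\delta b$ and $h\cdot d=\delta d$, so $C(V)\cong V\oplus k_\delta$. However, every $B$-stable line in $V^{\oplus m}$ is fixed by the unipotent radical. It therefore lies in $(ke_1)^{\oplus m}$ and carries the character $\alpha$, so $k_\delta$ does not embed. Consequently your displayed embedding $E\hookrightarrow p_X^*i_X^*E\otimes p_Y^*((i_Y^*E)^{\oplus n})$ is unjustified.

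The repair is one line. Since $C(V)$ is a quotient of $(V|_{H'})^{\oplus\dim V}$, it corresponds to a quotient $E_Y$ of $(i_Y^*E)^{\oplus\dim V}$ in $\mathcal{C}'_Y$. Closure under quotients gives $E_Y\in\mathcal{C}_Y$, and $\rho_H$ then gives $E\hookrightarrow p_X^*(i_X^*E)\otimes p_Y^*E_Y$. Equivalently, $E$ is a subquotient of $p_X^*i_X^*E\otimes p_Y^*((i_Y^*E)^{\oplus \dim V})$, which is all Theorem~\ref{Thm1}(2) requires.

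The other points you flagged are fine. $\rho_H$ is $G'\times_k H'$-equivariant because the two factors commute. The factor inclusions correspond to $i_X^*$ and $i_Y^*$ because $p_X\circ i_X=\Id_X$ while $p_Y\circ i_X$ is constant at $y_0$, and symmetrically. Your surjectivity argument is correct as it stands.

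With this fix, your proof takes a genuinely different route from the paper's. The paper argues geometrically:
\begin{enumerate}
\item Theorem~\ref{kunnethMain}, applied to the primed categories, gives ${p_X}_*E'\in\mathcal{C}'_X$ and base change at $x_0$ for all $E'\in\mathcal{C}'_{X\times_k Y}$.
\item Proposition~\ref{embedding} makes $p_X^*{p_X}_*E\hookrightarrow E$ a subobject. For $E\in\mathcal{C}_{X\times_k Y}$, closure under subobjects then forces ${p_X}_*E\cong i_X^*p_X^*{p_X}_*E\in\mathcal{C}_X$.
\item Theorem~\ref{kunnethMain} is applied again, now to the unprimed categories.
\end{enumerate}

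Your argument never uses pushforward or base change. It is a purely Tannakian statement about a quotient of $G'\times_k H'$ whose restrictions to the two factors factor through given quotients, so it does not depend on the criterion of Theorem~\ref{kunnethMain}.

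In group language it is even shorter. Let $K$, $K_G$, $K_H$ be the kernels of $G'\times_k H'\to\pi(\mathcal{C}_{X\times_k Y},(x_0,y_0))$, $G'\to\pi(\mathcal{C}_X,x_0)$ and $H'\to\pi(\mathcal{C}_Y,y_0)$. The hypotheses on $i_X^*$ and $i_Y^*$ give $K_G\times 1\subseteq K$ and $1\times K_H\subseteq K$. Your commutative square gives $K\subseteq K_G\times K_H$. Hence $K=K_G\times K_H$, and the coefficient-space computation can be skipped entirely.

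The paper's route is brief given its earlier results, and it keeps the statement tied to its main theme, the pushforward/base-change criterion.
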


\begin{proof}
    Suppose the natural homomorphism $\pi(\mathcal{C}'_{X\times_k Y},(x_0,y_0))\rightarrow \pi(\mathcal{C}'_X,x_0)\times_k \pi(\mathcal{C}'_Y,y_0)$ is an isomorphism. By Theorem~\ref{kunnethMain}, the natural homomorphism $\pi(\mathcal{C}'_{X\times_k Y},(x_0,y_0))\twoheadrightarrow \pi(\mathcal{C}'_X,x_0)$ is faithfully flat and for any $E'\in\mathcal{C}'_{X\times_k Y}$, we have ${p_X}_*E'\in\mathcal{C}'_X$ and $E'$ satisfies base change at $x_0$. Then by Proposition~\ref{embedding}, the canonical map $p_X^*{p_X}_*E'\hookrightarrow E'$ is an embedding in $\mathcal{C}'_{X\times_k Y}$ for any $E'\in\mathcal{C}'_{X\times_k Y}$. 
    
    Let $E\in\mathcal{C}_{X\times_k Y}\subseteq \mathcal{C}'_{X\times_k Y}$, then $p_X^*{p_X}_*E\hookrightarrow E\in\mathcal{C}_{X\times_k Y}$. It follows that $${p_X}_*E\cong i_X^*p_X^*{p_X}_*E\in\mathcal{C}_X.$$
    Then by Theorem~\ref{kunnethMain}, we have a natural isomorphism
    $$\pi(\mathcal{C}_{X\times_k Y},(x_0,y_0))\rightarrow \pi(\mathcal{C}_X,x_0)\times_k \pi(\mathcal{C}_Y,y_0).$$
\end{proof}

\begin{Proposition}\label{basechangeproduct}
Let $k$ be a field, $X$ and $Y$ connected schemes proper over $k$, $x_0\in X(k)$, $y_0\in Y(k)$, $p_X: X\times_k Y\rightarrow X$ and $p_Y:X\times_k Y\rightarrow Y$ the projections, $i_X: X\rightarrow X\times_k Y$ send $x$ to $(x,y_0)$ and $i_Y: Y\rightarrow X\times_k Y$ send $y$ to $(x_0,y)$, $\mathcal{C}_X,\mathcal{C}_Y,\mathcal{C}_{X\times_k Y}$ the Tannakian categories over $X$, $Y$, $X\times_k Y$ respectively. Suppose pullback induce functors among Tannakian categories
$$\begin{aligned}
    \begin{tikzcd}
	Y\arrow[r,shift left=2pt,harpoon,"i_Y"]\arrow[d]&X\times_k Y\arrow[d,shift left=2pt,harpoon,"p_X"]\arrow[l,shift left=2pt,harpoon,"p_Y"]\\
	\Spec k\arrow[r]&X\arrow[u,shift left=2pt,harpoon,"i_X"]
    \end{tikzcd}&\quad\quad&\begin{tikzcd}
	\mathcal{C}_Y\arrow[r,shift left=2pt,harpoon,"p_Y^*"]&\mathcal{C}_{X\times_k Y}\arrow[d,shift left=2pt,harpoon,"i_X^*"]\arrow[l,shift left=2pt,harpoon,"i_Y^*"]\\
	&\mathcal{C}_X\arrow[u,shift left=2pt,harpoon,"p_X^*"]
    \end{tikzcd}\end{aligned}.$$
If for any $E\in\mathcal{C}_{X\times_k Y}$, we have ${p_X}_*E\in\mathcal{C}_X$ and $E$ satisfies base change at $x_0$. Then 
\begin{enumerate}
    \item For any $E\in\mathcal{C}_{X\times_k Y}$, we have $R^i{p_X}_*E\in\mathcal{C}_X$ for any $i\geq 0$.
    \item For any $E\in\mathcal{C}_{X\times_k Y}$, we have $R^{i}{p_X}_*E|_{x_0}\cong H^{i}(Y,i_Y^*E)$ for any $i\geq 0$.
\end{enumerate}

\end{Proposition}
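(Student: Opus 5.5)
By Theorem~\ref{kunnethMain}, the hypothesis is equivalent to the Künneth isomorphism $\pi(\mathcal{C}_{X\times_k Y},(x_0,y_0))\cong \pi(\mathcal{C}_X,x_0)\times_k \pi(\mathcal{C}_Y,y_0)$. I will use it as in the proof of $(5)\Rightarrow(1)$ there. For every $E\in\mathcal{C}_{X\times_k Y}$ there exist $E_X\in\mathcal{C}_X$ and $E_Y\in\mathcal{C}_Y$ with an embedding $E\hookrightarrow p_X^*E_X\otimes p_Y^*E_Y$. Iterating on cokernels, which again lie in the abelian category $\mathcal{C}_{X\times_k Y}$, gives a resolution
$$0\rightarrow E\rightarrow I^0\rightarrow I^1\rightarrow I^2\rightarrow\cdots,\qquad I^j=p_X^*E_X^{j}\otimes p_Y^*E_Y^{j},$$
exact in $\mathcal{C}_{X\times_k Y}$. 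Each $I^j$ and each intermediate cokernel is a vector bundle.

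The key computation is that these ``product'' objects have controlled higher direct images. By flat base change for the flat morphism $Y\to\Spec k$ and the projection formula,
$$R^i{p_X}_*(p_X^*E_X\otimes p_Y^*E_Y)\cong E_X\otimes_k H^i(Y,E_Y)\cong E_X^{\oplus h^i(Y,E_Y)}\in\mathcal{C}_X .$$
This formation commutes with restriction to $x_0$: the fibre at $x_0$ is $E_X|_{x_0}\otimes_k H^i(Y,E_Y)=H^i(Y,i_Y^*(p_X^*E_X\otimes p_Y^*E_Y))$. So (1) and (2) hold for all product objects in every degree.

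I will then proceed by induction on $i$, proving (1) and (2) simultaneously for all $E\in\mathcal{C}_{X\times_k Y}$; the case $i=0$ is exactly the hypothesis. For $i\geq 1$, take $0\to E\to I^0\to Q\to0$ with $Q\in\mathcal{C}_{X\times_k Y}$. The long exact sequence gives
$$0\to{p_X}_*E\to{p_X}_*I^0\to{p_X}_*Q\to R^1{p_X}_*E\to R^1{p_X}_*I^0\to R^1{p_X}_*Q\to\cdots$$
On the fibre $Y=p_X^{-1}(x_0)$ there is the analogous long exact sequence of $H^i(Y,i_Y^*-)$, because $i_Y^*$ is exact on vector bundles. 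The base change maps form a morphism between these two long exact sequences once the sheaves are restricted to $x_0$. However, restriction to $x_0$ is exact only if all the terms are vector bundles.

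So at each stage I must know that the terms lie in $\mathcal{C}_X$, which is an abelian category of vector bundles with exact fibre functor $|_{x_0}$. For $R^{i}{p_X}_*E$, I will write it as an extension of $\ker(R^i{p_X}_*I^0\to R^i{p_X}_*Q)$ by $\coker(R^{i-1}{p_X}_*I^0\to R^{i-1}{p_X}_*Q)$. The terms $R^{i}{p_X}_*I^0$ and $R^{i-1}{p_X}_*I^0$ lie in $\mathcal{C}_X$ by the product computation, and $R^{i-1}{p_X}_*Q\in\mathcal{C}_X$ by the induction hypothesis. But $R^i{p_X}_*Q$ is the same degree as the unknown, so membership is circular at top degree.

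To break this circularity I will use dimension shifting along the whole resolution, $R^i{p_X}_*E\cong$ the $i$-th cohomology of the complex ${p_X}_*I^\bullet$. This holds if each $I^j$ is ${p_X}_*$-acyclic. Products are not acyclic, so instead I will choose the embeddings cleverly: replace $E_Y$ by a $\mathcal{C}_Y$-object with vanishing higher cohomology if possible. That is not available in general, so the fallback is descending induction: the $R^i$ vanish for $i>\dim Y$ (or beyond the cohomological dimension of $Y$), and one does descending induction on $i$ combined with the five lemma applied to the fibre comparison.

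This top-degree control is the step I expect to be the main obstacle. Once all the terms are known to be in $\mathcal{C}_X$, hence vector bundles with exact fibres, the map of long exact sequences at $x_0$ together with the five lemma (the $I^0$ terms being isomorphisms, and $Q$, $E$ being isomorphisms in lower degree by induction) yields $R^i{p_X}_*E|_{x_0}\cong H^i(Y,i_Y^*E)$, which is (2).

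It also remains to check that extensions stay in $\mathcal{C}_X$. Here I will use the splitting $p_X^*$, $i_X^*$: the exact sequence above can be realized, via $i_X^*$ applied to sequences in $\mathcal{C}_{X\times_k Y}$, as an exact sequence in $\mathcal{C}_X$, so that no saturation issue arises.
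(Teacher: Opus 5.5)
\textbf{There is a genuine gap, and it is the step you flag yourself.} Your d\'evissage only ever uses the first step $0\to E\to I^0\to Q\to 0$ of the resolution. So in degree $i$ you need the same-degree term $R^i{p_X}_*Q$, which is unknown. This happens twice: to see that $R^i{p_X}_*E$ is an extension of objects of $\mathcal{C}_X$, and in the five lemma, which needs $R^i{p_X}_*Q|_{x_0}\to H^i(Y,i_Y^*Q)$ to be injective and the restricted top row to be exact.

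Neither fallback removes this. Acyclic objects are not available in $\mathcal{C}_Y$. Descending induction from the vanishing of $R^i{p_X}_*$ for $i>\dim Y$ at best handles the top degree $d$, using right exactness of $R^d{p_X}_*$ on a two-step presentation by product objects. In an intermediate degree the circularity persists whether you use a sub- or a quotient presentation. With the embedding you meet $\ker(R^i{p_X}_*I^0\to R^i{p_X}_*Q)$. With a quotient $0\to K\to P\to E\to 0$, $P$ a product object, you meet the image of $R^i{p_X}_*K$ in $R^i{p_X}_*P$. Each is again an unknown of degree $i$. As written, you have no proof of (1) for $i\geq 1$, and hence none of (2).

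\textbf{The missing idea, which is what the paper uses,} is to feed the whole resolution $0\to E\to I^0\to I^1\to\cdots$ into the first-quadrant spectral sequence
\[
E_1^{j,i}=R^i{p_X}_*(I^j)\cong E_X^j\otimes_k H^i(Y,E_Y^j)\Longrightarrow R^{i+j}{p_X}_*E .
\]
Getting the embeddings from Theorem~\ref{kunnethMain} is fine; the paper instead builds a surjection $p_X^*{p_X}_*\mathcal{H}om(p_Y^*i_Y^*E,E)\otimes p_Y^*i_Y^*E\twoheadrightarrow E$ and dualizes.

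\begin{itemize}
\item The $E_1$-page consists only of the product terms you already computed, so no unknown of the same degree ever enters.
\item All later pages are built from kernels and cokernels of morphisms in $\mathcal{C}_X$. Hence they are vector bundles, on which $|_{x_0}$ is exact.
\item The resolution stays exact after restriction to $\{x_0\}\times_k Y$, since all terms are locally free.
\item The base change maps then give a morphism to the spectral sequence $H^i(Y,i_Y^*I^j)\Rightarrow H^{i+j}(Y,i_Y^*E)$. This morphism is an isomorphism on $E_1$, which gives (2).
\end{itemize}

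Alternatively, once every $R^i{p_X}_*E$ is known to be locally free, (2) follows from the usual cohomology-and-base-change theorem by descending induction. That is presumably what your last step was reaching for, but it presupposes (1).

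\textbf{Your last paragraph does not work.} $i_X^*$ restricts objects of $\mathcal{C}_{X\times_k Y}$ to $X\times\{y_0\}$ and does not compute $R^i{p_X}_*$. So it cannot realize the long exact sequence of higher direct images as a sequence in $\mathcal{C}_X$.

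Your underlying worry is nevertheless legitimate. The spectral sequence gives $R^n{p_X}_*E$ a finite filtration with graded pieces $E_\infty^{j,n-j}\in\mathcal{C}_X$. That shows $R^n{p_X}_*E\in\overline{\mathcal{C}}_X$, which suffices for local freeness and for (2). The paper then concludes $R^n{p_X}_*E\in\mathcal{C}_X$ by appealing to the abelianness of $\mathcal{C}_X$. Strictly speaking, abelianness only controls the graded pieces, so closing up under these extensions needs a further argument.
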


\begin{proof}
Let $E\in\mathcal{C}_{{X\times_k Y}}$, we have $\mathcal{H}om_{X\times_k Y}(p_Y^*i_Y^* E, E)\in\mathcal{C}_{X\times_k Y}$. Then ${p_X}_*\mathcal{H}om_{X\times_k Y}(p_Y^*i_Y^* E, E)\in\mathcal{C}_X$. Tensoring the adjunction 
$$p_X^*{p_X}_* \mathcal{H}om_{X\times_k Y}(p_Y^*i_Y^* E, E)\rightarrow \mathcal{H}om_{X\times_k Y}(p_Y^*i_Y^* E, E)$$ 
with $p_Y^*i_Y^* E$ and compositing with 
$$\begin{aligned}
    \mathcal{H}om_{X\times_k Y}(p_Y^*i_Y^* E, E)\otimes_{\mathcal{O}_{X\times_k Y}} p_Y^*i_Y^* E\rightarrow E,\varphi\otimes e\mapsto  \varphi(e),
\end{aligned}$$
we have a morphism in $\mathcal{C}_{X\times_k Y}$:
$$p_X^*{p_X}_* \mathcal{H}om_{X\times_k Y}(p_Y^*i_Y^* E, E)\otimes_{\mathcal{O}_{X\times_k Y}} p_Y^*i_Y^* E\rightarrow \mathcal{H}om_{X\times_k Y}(p_Y^*i_Y^* E, E)\otimes_{\mathcal{O}_{X\times_k Y}} p_Y^*i_Y^* E\rightarrow E.$$ 
Since $\mathcal{H}om_{X\times_k Y}(p_Y^*i_Y^* E, E)$ satisfies base change at $x_0$, we have
$$\begin{aligned}
    ({p_X}_* \mathcal{H}om_{X\times_k Y}(p_Y^*i_Y^* E, E))|_{x_0}&\cong H^0(Y,i_Y^*\mathcal{H}om_{X\times_k Y}(p_Y^*i_Y^* E, E))\\
    &\cong H^0(Y,\mathcal{H}om_{X\times_k Y}(i_Y^*p_Y^*i_Y^* E, i_Y^*E))\\
    &\cong \Hom_{Y}(i_Y^*E,i_Y^*E).
\end{aligned}$$
Restricting the above morphism to $\{x_0\}\times_k Y$, we obtain a surjective morphism
$$\begin{aligned}
    i_Y^*p_X^*{p_X}_* \mathcal{H}om_{X\times_k Y}(p_Y^*i_Y^* E, E)\otimes_{\mathcal{O}_{X\times_k Y}} i_Y^*p_Y^*i_Y^* E&\cong ({p_X}_* \mathcal{H}om_{X\times_k Y}(p_Y^*i_Y^* E, E))|_{x_0}\otimes_{k} i_Y^* E\\
    &\cong \Hom_{Y}(i_Y^*E,i_Y^*E)\otimes_{k}i_Y^*E\twoheadrightarrow i_Y^*E.
\end{aligned}$$
It follows that we have a surjective morphism 
$$p_X^*{p_X}_* \mathcal{H}om_{X\times_k Y}(p_Y^*i_Y^* E, E)\otimes_{\mathcal{O}_{X\times_k Y}} p_Y^*i_Y^* E\twoheadrightarrow E.$$
So for any $E\in\mathcal{C}_{X\times_k Y}$, there exist $E'\in\mathcal{C}_X,F'\in\mathcal{C}_Y$ such that we have a quotient in $\mathcal{C}_{X\times_k Y}$:
$$p_X^* E' \otimes p_Y^*F'\twoheadrightarrow E.$$
Then by duality, there exist $E_0\in\mathcal{C}_X,F_0\in\mathcal{C}_Y$ such that we have an embedding in $\mathcal{C}_{X\times_k Y}$:
$$E\hookrightarrow p_X^* E_0 \otimes p_Y^*F_0.$$
Since $(p_X^* E_0 \otimes p_Y^*F_0)/E\in\mathcal{C}_{X\times_k Y}$, there exist $E_1\in\mathcal{C}_X,F_1\in\mathcal{C}_Y$ such that we have an embedding in $\mathcal{C}_{X\times_k Y}$:
$$(p_X^* E_0 \otimes p_Y^*F_0)/E\hookrightarrow p_X^* E_1 \otimes p_Y^*F_1.$$
Inductively we can therefore construct the following acyclic complex of objects in $\mathcal{C}_{X\times_k Y}$:
$$\mathcal{C}^{\bullet}:
0\rightarrow E\rightarrow p_X^* E_0 \otimes p_Y^*F_0\rightarrow p_X^* E_1 \otimes p_Y^*F_1\rightarrow\cdots p_X^* E_i \otimes p_Y^*F_i\rightarrow\cdots,$$
where $E_i\in\mathcal{C}_X$ and $F_i\in\mathcal{C}_Y$ for any $i$. Note that by projection formula, we have
$$R^i{p_X}_*(p_X^* E_j \otimes p_Y^*F_j)\cong E_j\otimes R^i{p_X}_*(p_Y^*F_j) \cong E_j^{\oplus h^i(Y,F_j)}\in \mathcal{C}_X.$$
Consider the following spectral sequence
$$E_j^{\oplus h^i(Y,F_j)}=R^i{p_X}_*(p_X^* E_j \otimes p_Y^*F_j)\Rightarrow R^{i+j}{p_X}_*\mathcal{C}^{\bullet}\cong R^{i+j}{p_X}_*E.$$
Since $\mathcal{C}_X$ is an abelian category, kernels and cokernels of morphisms of objects in this category are also in $\mathcal{C}_X$. This implies that $R^{i+j}{p_X}_* E\in\mathcal{C}_X$ for any $i+j\geq 0$. 

Note that the above complex restricted to $\{x_0\}\times_k Y$ remains acyclic, as all the sheaves in this complex are locally free. Therefore, we have a commutative diagram of spectral sequences
$$\begin{tikzcd}
     R^i{p_X}_*(p_X^* E_j \otimes p_Y^*F_j)|_{x_0}\arrow{d}&\Rightarrow & R^{i+j}{p_X}_*E|_{x_0}\arrow{d}\\
     H^i(Y,i_Y^*(p_X^* E_j \otimes p_Y^*F_j))&\Rightarrow&H^{i+j}(Y,i_Y^*E)
\end{tikzcd}$$
The left vertical map in this diagram is an isomorohism. Therefore, the right vertical map is an isomorphism:
$$R^{l}{p_X}_*E|_{x_0}\cong H^{l}(Y,i_Y^*E)$$
for any $l\geq 0$.
\end{proof}
\begin{Lemma}\label{saturatedinducefunctor}
    Let $k$ be a field, $f:X\rightarrow S$ a morphism of connected schemes proper over $k$, $x\in X(k)$ lying over $s\in S(k)$, $\mathcal{C}_X,\mathcal{C}_S$ the Tannakian categories over $X$, $S$ respectively. If pullback induces functor $f^*:\mathcal{C}_S\rightarrow \mathcal{C}_X$, then pullback induces functor $f^*:\overline{\mathcal{C}}_S\rightarrow \overline{\mathcal{C}}_X$.
\end{Lemma}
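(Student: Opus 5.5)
The plan is to take an object $E\in\overline{\mathcal{C}}_S$ with its defining filtration and pull the filtration back along $f$. By definition of the saturation category, there is a filtration
$$0\hookrightarrow E_1\hookrightarrow\cdots\hookrightarrow E_n=E$$
by subsheaves of $E$ whose successive quotients $E^i=E_{i+1}/E_i$ lie in $\mathcal{C}_S$. In particular each $E^i$ is a vector bundle on $S$. By induction on $i$, using that an extension of vector bundles is a vector bundle, every $E_i$ is a vector bundle and every inclusion $E_i\hookrightarrow E_{i+1}$ is locally split, i.e. its cokernel is locally free.

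Next I would apply $f^*$ to each short exact sequence
$$0\rightarrow E_i\rightarrow E_{i+1}\rightarrow E^i\rightarrow 0.$$
Although $f^*$ is only right exact in general, the sequence stays exact because it is a sequence of locally free sheaves with locally free cokernel, hence locally split. Equivalently, $\mathcal{T}or_1^{\mathcal{O}_S}(E^i,\mathcal{O}_X)=0$ since $E^i$ is flat. This produces a filtration
$$0\hookrightarrow f^*E_1\hookrightarrow\cdots\hookrightarrow f^*E_n=f^*E$$
of $f^*E$ by subbundles, with successive quotients $f^*E_{i+1}/f^*E_i\cong f^*E^i$.

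The hypothesis that pullback induces $f^*:\mathcal{C}_S\rightarrow\mathcal{C}_X$ gives $f^*E^i\in\mathcal{C}_X$ for every $i$. Also $f^*E$ is a vector bundle on $X$, so $f^*E\in\overline{\mathcal{C}}_X$. On morphisms, $f^*$ restricted to the full subcategory $\overline{\mathcal{C}}_S\subseteq\Vect(S)$ lands in the full subcategory $\overline{\mathcal{C}}_X\subseteq\Vect(X)$, so the induced functor is well defined.

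The only step needing any care is the exactness of the pulled-back filtration, which is the local splitting argument above; everything else is formal.
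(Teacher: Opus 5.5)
Your proof is correct and follows the same route as the paper: you pull back the defining filtration along $f$ and note that the successive quotients are $f^*(E_{i+1}/E_i)\in\mathcal{C}_X$. You also justify why the pulled-back sequences stay exact: the quotients are locally free, so each sequence is locally split. The paper uses this point without comment.
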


\begin{proof}
    For any $E\in\overline{\mathcal{C}}_S$, consider the filtration of $E$ in $\overline{\mathcal{C}}_S$:
    $$0\hookrightarrow E_1\hookrightarrow \cdots\hookrightarrow E_n=E,$$
    such that $E_{i+1}/E_i\in\mathcal{C}_S$ for any $i$. Applying $f^*$, we obtain a filtration of $f^* E$:
    $$0\hookrightarrow f^*E_1\hookrightarrow \cdots\hookrightarrow f^*E_n=f^*E,$$
    where $(f^*E_{i+1})/(f^*E_i)\cong f^* (E_{i+1}/E_i)\in \mathcal{C}_X$ for any $i$. Hence $f^*E\in\overline{\mathcal{C}}_X$, i.e. pullback induces functor $$f^*:\overline{\mathcal{C}}_S\rightarrow \overline{\mathcal{C}}_X.$$
\end{proof}

\begin{Proposition}\label{SaturatedKunneth}
    Let $k$ be a field, $X$ and $Y$ connected schemes proper over $k$, $x_0\in X(k)$, $y_0\in Y(k)$, $p_X: X\times_k Y\rightarrow X$ and $p_Y:X\times_k Y\rightarrow Y$ the projections, $i_X: X\rightarrow X\times_k Y$ send $x$ to $(x,y_0)$ and $i_Y: Y\rightarrow X\times_k Y$ send $y$ to $(x_0,y)$, $\mathcal{C}_X,\mathcal{C}_Y,\mathcal{C}_{X\times_k Y}$ the Tannakian categories over $X$, $Y$, $X\times_k Y$ respectively. Suppose pullback induce functors among Tannakian categories
$$\begin{aligned}
    \begin{tikzcd}
	Y\arrow[r,shift left=2pt,harpoon,"i_Y"]\arrow[d]&X\times_k Y\arrow[d,shift left=2pt,harpoon,"p_X"]\arrow[l,shift left=2pt,harpoon,"p_Y"]\\
	\Spec k\arrow[r]&X\arrow[u,shift left=2pt,harpoon,"i_X"]
    \end{tikzcd}&\quad\quad&\begin{tikzcd}
	\mathcal{C}_Y\arrow[r,shift left=2pt,harpoon,"p_Y^*"]&\mathcal{C}_{X\times_k Y}\arrow[d,shift left=2pt,harpoon,"i_X^*"]\arrow[l,shift left=2pt,harpoon,"i_Y^*"]\\
	&\mathcal{C}_X\arrow[u,shift left=2pt,harpoon,"p_X^*"]
    \end{tikzcd}\end{aligned}.$$
Then the following conditions are equivalent:
\begin{enumerate}
    \item The natural homomorphism $\pi(\mathcal{C}_{X\times_k Y},(x_0,y_0))\rightarrow \pi(\mathcal{C}_X,x_0)\times_k \pi(\mathcal{C}_Y,y_0)$ is an isomorphism.
    \item The natural homomorphism $\pi(\overline{\mathcal{C}}_{X\times_k Y},(x_0,y_0))\rightarrow \pi(\overline{\mathcal{C}}_X,x_0)\times_k \pi(\overline{\mathcal{C}}_Y,y_0)$ is an isomorphism.
\end{enumerate}
\end{Proposition}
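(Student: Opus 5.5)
The plan is to reduce both implications to the cohomological criterion of Theorem~\ref{kunnethMain}, applied once to $\mathcal{C}$ and once to $\overline{\mathcal{C}}$. Throughout I use that $\overline{\mathcal{C}}_X,\overline{\mathcal{C}}_Y,\overline{\mathcal{C}}_{X\times_k Y}$ are Tannakian categories over the respective schemes with fibre functors $|_{x_0}$, $|_{y_0}$, $|_{(x_0,y_0)}$; in particular they are abelian and $\mathcal{C}\subseteq\overline{\mathcal{C}}$ is a Tannakian subcategory. I also use that, by Lemma~\ref{saturatedinducefunctor}, the pullbacks $p_X^*,p_Y^*,i_X^*,i_Y^*$ induce functors between the saturated categories. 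This is exactly the setting required to apply Theorem~\ref{kunnethMain} and Proposition~\ref{kunnethdescent} to $\overline{\mathcal{C}}$.

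\emph{(2)$\Rightarrow$(1).} Apply Proposition~\ref{kunnethdescent} with $\mathcal{C}'=\overline{\mathcal{C}}$ and the subcategories $\mathcal{C}$. By hypothesis the Künneth map for $\overline{\mathcal{C}}$ is an isomorphism, so the Künneth map for $\mathcal{C}$ is one as well. Nothing more is needed here.

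\emph{(1)$\Rightarrow$(2).} By Theorem~\ref{kunnethMain} it suffices to check condition (1) of that theorem for $\overline{\mathcal{C}}$: for every $E\in\overline{\mathcal{C}}_{X\times_k Y}$, we need ${p_X}_*E\in\overline{\mathcal{C}}_X$ and $E$ satisfies base change at $x_0$. I will prove the stronger statement that for all $i\ge 0$ we have $R^i{p_X}_*E\in\overline{\mathcal{C}}_X$ and $R^i{p_X}_*E|_{x_0}\cong H^i(Y,i_Y^*E)$. The argument is by induction on the length $n$ of a filtration of $E$ whose graded pieces lie in $\mathcal{C}_{X\times_k Y}$.

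\begin{itemize}
\item[Base case.] For $n=1$ we have $E\in\mathcal{C}_{X\times_k Y}$. Theorem~\ref{kunnethMain} applied to the hypothesis (1) gives ${p_X}_*E\in\mathcal{C}_X$ together with base change at $x_0$. Proposition~\ref{basechangeproduct} then upgrades this to all $R^i$, landing in $\mathcal{C}_X\subseteq\overline{\mathcal{C}}_X$.
\item[Inductive step.] Take $0\to E'\to E\to E''\to 0$ with $E'$ of shorter length and $E''\in\mathcal{C}_{X\times_k Y}$. This gives a long exact sequence
\[\cdots\to R^{i-1}{p_X}_*E''\xrightarrow{\delta} R^i{p_X}_*E'\to R^i{p_X}_*E\to R^i{p_X}_*E''\xrightarrow{\delta} R^{i+1}{p_X}_*E'\to\cdots\]
whose outer terms lie in $\overline{\mathcal{C}}_X$ by induction. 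The maps in it are morphisms of sheaves between objects of the full subcategory $\overline{\mathcal{C}}_X$. Since $\overline{\mathcal{C}}_X$ is abelian, $\ker\delta$ and $\coker\delta$ lie in $\overline{\mathcal{C}}_X$. Hence $R^i{p_X}_*E$ is an extension of $\ker\delta$ by $\coker\delta$, and so lies in $\overline{\mathcal{C}}_X$, since the saturation is closed under extensions.
\item[Base change.] Restricting the long exact sequence to $x_0$ keeps it exact, because every term is locally free and restriction is therefore exact on each short exact piece. The restriction maps to the long exact cohomology sequence on $Y$ of $0\to i_Y^*E'\to i_Y^*E\to i_Y^*E''\to 0$. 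By induction the comparison maps on the $E'$ and $E''$ terms are isomorphisms, so the five lemma gives base change for $E$ in every degree.
\end{itemize}

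The main obstacle is the membership step. It relies on the connecting maps being morphisms inside an abelian category whose kernels and cokernels remain vector bundles; otherwise one cannot conclude that $R^i{p_X}_*E$ lies in $\overline{\mathcal{C}}_X$ rather than merely being coherent. If the Tannakian property of $\overline{\mathcal{C}}_X$ is not available in the needed form, I would use the same filtration to verify the exactness criterion of Theorem~\ref{Thm1}(3) directly on representations. Concretely, one identifies $\pi(\overline{\mathcal{C}}_{X\times_k Y})$-representations as iterated extensions of $\pi(\mathcal{C}_{X\times_k Y})$-representations and uses the splitting of Lemma~\ref{split}.
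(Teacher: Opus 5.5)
Your proposal is correct and follows the paper's proof. For (2)$\Rightarrow$(1) you apply Proposition~\ref{kunnethdescent} with $\mathcal{C}'=\overline{\mathcal{C}}$, and for (1)$\Rightarrow$(2) you verify condition (1) of Theorem~\ref{kunnethMain} for the saturations by induction on filtration length, using the long exact sequence of the $R^i{p_X}_*$ and Proposition~\ref{basechangeproduct} for the graded pieces. Your bookkeeping is tighter than the paper's: you use abelianness of $\overline{\mathcal{C}}_X$ rather than $\mathcal{C}_X$ for kernels and cokernels past the first step, and you carry base change in every degree, so the five lemma really does give surjectivity at the $H^0$ level.
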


\begin{proof}
    $(2)\Rightarrow (1)$ It follows by Proposition~\ref{kunnethdescent}.
    
    $(1)\Rightarrow (2)$ Since the natural homomorphism $\pi(\mathcal{C}_{X\times_k Y},(x_0,y_0))\rightarrow \pi(\mathcal{C}_X,x_0)\times_k \pi(\mathcal{C}_Y,y_0)$ is an isomorphism, for any $F\in\mathcal{C}_{X\times_k Y}$, we have ${p_X}_*E\in\mathcal{C}_X$ and $E$ satisfies base change at $x_0$ by Theorem~\ref{kunnethMain}.
    By Lemma~\ref{saturatedinducefunctor}, pullback induce functors among Tannakian categories
    \[
        \begin{tikzcd}
	\overline{\mathcal{C}}_Y\arrow[r,shift left=2pt,harpoon,"p_Y^*"]&\overline{\mathcal{C}}_{X\times_k Y}\arrow[d,shift left=2pt,harpoon,"i_X^*"]\arrow[l,shift left=2pt,harpoon,"i_Y^*"]\\
	&\overline{\mathcal{C}}_X\arrow[u,shift left=2pt,harpoon,"p_X^*"]
    \end{tikzcd}.
    \]
    
    For any $E\in\overline{\mathcal{C}}_{X\times_k Y}$, consider the filtration of $E$ in $\overline{\mathcal{C}}_{X\times_k Y}$:
    $$0\hookrightarrow E_1\hookrightarrow \cdots\hookrightarrow E_n=E,$$
    such that $E_{i+1}/E_i\in\mathcal{C}_{X\times_k Y}$ for any $i$. Applying the functor ${p_X}_*$, we obtain a filtration of ${p_X}_*E$:
$$0\hookrightarrow {p_X}_*E_1\hookrightarrow \cdots\hookrightarrow {p_X}_*E_n={p_X}_*E.$$

(I) We firstly verify that ${p_X}_*E\in\mathcal{C}_X$. Consider the exact sequence
$$0\rightarrow E_1\rightarrow E_2\rightarrow E_2/E_1\rightarrow 0.$$
Applying ${p_X}_*$, we obtain an exact sequence
$$\begin{aligned}
    0\xrightarrow{\alpha_0} {p_X}_*E_1\rightarrow {p_X}_*E_2\rightarrow {p_X}_*(E_2/E_1)\xrightarrow{\alpha_1} R^1{p_X}_*E_1\rightarrow R^1{p_X}_*E_2\rightarrow R^1{p_X}_*(E_2/E_1)\xrightarrow{\alpha_2}
    R^2{p_X}_*E_1\rightarrow\\
    \cdots\rightarrow R^{n-1}{p_X}_*(E_2/E_1)\xrightarrow{\alpha_{n}}R^n{p_X}_*E_1\rightarrow R^n{p_X}_*E_2\rightarrow R^n{p_X}_*(E_2/E_1)\xrightarrow{\alpha_{n+1}}R^{n+1}{p_X}_*E_1\rightarrow\cdots
\end{aligned}$$
Then we have short exact sequences for any $i$:
$$0\rightarrow \coker \alpha_i\rightarrow R^i{p_X}_*E_2\rightarrow \ker \alpha_{i+1} \rightarrow 0.$$
Then $R^i{p_X}_*E_1,R^i{p_X}_*(E_2/E_1)\in\mathcal{C}_X$ for any $i\geq 0$ by Proposition~\ref{basechangeproduct}. So $\ker\alpha_i,\coker\alpha_i\in\mathcal{C}_X$ for any $i\geq 0$. Then we have $R^i{p_X}_*E_2\in\overline{\mathcal{C}}_X$ for any $i\geq 0$.

Consider the exact sequence
$$0\rightarrow E_2\rightarrow E_3\rightarrow E_3/E_2\rightarrow 0.$$
Applying ${p_X}_*$, we obtain an exact sequence
$$\begin{aligned}
    0\xrightarrow{\beta_0} {p_X}_*E_2\rightarrow {p_X}_*E_3\rightarrow {p_X}_*(E_3/E_2)\xrightarrow{\beta_1} R^1{p_X}_*E_2\rightarrow R^1{p_X}_*E_3\rightarrow R^1{p_X}_*(E_3/E_2)\xrightarrow{\beta_2}
    R^2{p_X}_*E_2\rightarrow\\
    \cdots\rightarrow R^{n-1}{p_X}_*(E_3/E_2)\xrightarrow{\beta_{n}}R^n{p_X}_*E_2\rightarrow R^n{p_X}_*E_3\rightarrow R^n{p_X}_*(E_3/E_2)\xrightarrow{\beta_{n+1}}R^{n+1}{p_X}_*E_2\rightarrow\cdots
\end{aligned}$$
Then we have short exact sequences for any $i$:
$$0\rightarrow \coker \beta_i\rightarrow R^i{p_X}_*E_3\rightarrow \ker \beta_{i+1} \rightarrow 0.$$
Then $R^i{p_X}_*E_2,R^i{p_X}_*(E_3/E_2)\in\mathcal{C}_X$ for any $i\geq 0$ by Proposition~\ref{basechangeproduct}. So $\ker\beta_i,\coker\beta_i\in\mathcal{C}_X$ for any $i\geq 0$. Then we have $R^i{p_X}_*E_3\in\overline{\mathcal{C}}_X$ for any $i\geq 0$.

Repeating this method, we have $R^i {p_X}_*E_j\in\overline{\mathcal{C}}_X$ for any $i\geq 0$ and any $j\geq 1$. In particular, we have $${p_X}_*E\in\overline{\mathcal{C}}_X.$$

(II) We verify that $E$ satisfies base change at $x_0$. Consider the following commutative diagram
\[
    \begin{tikzcd}
        0\arrow[r]& ({p_X}_*E_1)|_{x_0}\arrow[r]\arrow[d,"\cong"]&({p_X}_*E_2)|_{x_0}\arrow[r]\arrow[d]&({p_X}_*(E_2/E_1))|_{x_0}\arrow[d,"\cong"]\\
        0\arrow[r]& H^0(Y,i_Y^*E_1)\arrow[r]&H^0(Y,i_Y^*E_2)\arrow[r]&H^0(Y,i_Y^*(E_2/E_1))
    \end{tikzcd}
\]
It follows that $({p_X}_*E_2)|_{x_0}\cong H^0(Y,i_Y^*E_2)$. Repeating this method, we have $({p_X}_*E_i)|_{x_0}\cong H^0(Y,i_Y^*E_i)$ for any $i\geq 1$. In particular, we have 
$$({p_X}_*E)|_{x_0}\cong H^0(Y,i_Y^*E).$$
Then by Theorem~\ref{kunnethMain}, we have an isomorphism
$$\pi(\overline{\mathcal{C}}_{X\times_k Y},(x_0,y_0))\cong \pi(\overline{\mathcal{C}}_X,x_0)\times_k \pi(\overline{\mathcal{C}}_Y,y_0).$$
\end{proof}

\section{Applications}
\begin{Definition}
Let $k$ be a field, $X$ a geometrically reduced connected scheme proper over $k$, $x\in X(k)$, $E$ a vector bundle on $X$ of rank $r$. If $k$ is of positive characteristic, then $F_X:X\rightarrow X$ is the absolute Frobenius morphism. Then $E$ is said to be
\begin{itemize}
    \item \textit{numerically flat}, if both $E$ and $E^\vee$ are nef.
    \item \textit{Nori semistable}, if for any smooth proper curve $f:C\rightarrow X$, $f^*E$ is semistable of degree 0 on $C$.
    \item \textit{finite}, if there exist $f(t)\neq g(t)\in\mathbb{N}[t]$ such that $f(E)\cong g(E)$, where
    $$h(E):=\bigoplus_{i=0}^m\bigoplus_{j=1}^{n_i}E^{\otimes i}\text{ for any }h(t)=\sum\limits_{i=0}^mn_it^i\in\mathbb{N}[t].$$
    \item \textit{essentially finite}, if there exists $E_1\hookrightarrow E_2\hookrightarrow F\in\Vect(X)$ such that $E\cong E_2/E_1$, where $E_1,E_2$ are numerically flat and $F$ is finite.
    \item \textit{Frobenius finite}, if there exist $f(t)\neq g(t)\in\mathbb{N}[t]$ such that $\tilde{f}(E)\cong \tilde{g}(E)$, where 
    $$\tilde{h}(E):=\bigoplus\limits_{i=1}^{m}((F_X^i)^*E)^{\oplus n_i} \text{ for any }h(t)=\sum\limits_{i=0}^mn_it^i\in\mathbb{N}[t].$$
    Denote the set of all Frobenius finite vector bundles on $X$ by $FF(X)$.
    \item \textit{essentially Frobenius finite}, if there exists $E_1\hookrightarrow E_2\hookrightarrow F\in\Vect(X)$ such that $E\cong E_2/E_1$, where $E_1,E_2$ are numerically flat and $F\in FF(X)$.
    \item \textit{Frobenius trivial}, if there exists a positive integer $n$ such that $F_X^{n*} E\cong \mathcal{O}_X^{\oplus r}$.
    \item \textit{\'etale trivializable}, if there exists a finite \'etale covering $\phi: P\rightarrow X$ such that $\phi^* E\cong \mathcal{O}_P^{\oplus r}$.
    \item \textit{unipotent}, if there exists a filtration
    $0\hookrightarrow E_1\hookrightarrow \cdots\hookrightarrow E_n=E$
    such that $E_{i+1}/E_i\cong\mathcal{O}_X$ for any $i$.
\end{itemize}
We have the following Tannakian categories:
    \begin{itemize}
        \item $\mathcal{C}^{NF}(X)$: objects consist of numerically flat bundles on $X$.
        \item $\mathcal{C}^{N}(X)$: objects consist of essentially finite bundles on $X$.
        \item $\mathcal{C}^{F}(X)$: objects consist of essentially Frobenius finite bundles on $X$.
        \item $\mathcal{C}^{Loc}(X)$: objects consist of Frobenius trivial bundles on $X$.
        \item $\mathcal{C}^{\acute{e}t}(X)$: objects consist of \'etale trivializable bundles on $X$.
        \item $\mathcal{C}^{uni}(X)$: objects consist of unipotent bundles on $X$.
    \end{itemize}
We have the following Tannaka group schemes:
\begin{itemize}
        \item $\pi^{S}(X,x):=\pi(\mathcal{C}^{NF}(X),x)$, called the \textit{S-fundamental group scheme}.
        \item $\pi^{N}(X,x):=\pi(\mathcal{C}^{N}(X),x)$, called the \textit{Nori fundamental group scheme}.
        \item $\pi^{EN}(X,x):=\pi(\overline{\mathcal{C}^{N}(X)},x)$, called the \textit{extended Nori fundamental group scheme}.
        \item $\pi^{F}(X,x):=\pi(\mathcal{C}^{F}(X),x)$, called the \textit{F-fundamental group scheme}.
        \item $\pi^{EF}(X,x):=\pi(\overline{\mathcal{C}^{F}(X)},x)$, called the \textit{extended F-fundamental group scheme}.
        \item $\pi^{Loc}(X,x):=\pi(\mathcal{C}^{Loc}(X),x)$, called the \textit{local fundamental group scheme}.
        \item $\pi^{ELoc}(X,x):=\pi(\overline{\mathcal{C}^{Loc}(X)},x)$, called the \textit{extended local fundamental group scheme}.
        \item $\pi^{\acute{e}t}(X,x):=\pi(\mathcal{C}^{\acute{e}t}(X),x)$, called the \textit{\'etale fundamental group scheme}.
        \item $\pi^{E\acute{e}t}(X,x):=\pi(\overline{\mathcal{C}^{\acute{e}t}(X)},x)$, called the \textit{extended \'etale fundamental group scheme}.
        \item $\pi^{uni}(X,x):=\pi(\mathcal{C}^{uni}(X),x)$, called the \textit{unipotent fundamental group scheme}.
    \end{itemize}
\end{Definition}

\begin{Remark}
    Let $k$ be a field, $X$ a geometrically reduced connected scheme proper over $k$. Then 
    \begin{enumerate}
        \item $\mathcal{C}^{NF}(X)$ and $\mathcal{C}^{uni}(X)$ are saturated categories, i.e. $\overline{\mathcal{C}^{NF}(X)}=\mathcal{C}^{NF}(X)$ and $\overline{\mathcal{C}^{uni}(X)}=\mathcal{C}^{uni}(X)$.
        \item $\mathcal{C}^{*}(X)$ and $\overline{\mathcal{C}^{*}(X)}$ are Tannakian subcategories of $\mathcal{C}^{NF}(X)$ for $*\in\{N,F,Loc,\acute{e}t,uni\}$.
    \end{enumerate}
\end{Remark}

\begin{Remark}\label{tannakianrelation}
    Let $k$ be a field, $X$ a geometrically reduced connected scheme proper over $k$. Then a vector bundle $E$ is Nori semistable iff $E$ is numerically flat. Moreover, by \cite{AdAm25}, \cite{AmBi10}, \cite{Lan11}, \cite{MeSu08}, \cite{Nor76}, \cite{Nor82} and \cite{Ota17}, we have sequences of Tannakian subcategories
    $$\mathcal{C}^{\acute{e}t}(X)\subseteq\mathcal{C}^{N}(X)\subseteq\overline{\mathcal{C}^{N}(X)}\subseteq\mathcal{C}^{S}(X),\quad\mathcal{C}^{uni}(X)\subseteq\overline{\mathcal{C}^{\acute{e}t}(X)}\subseteq\overline{\mathcal{C}^{N}(X)}\subseteq\mathcal{C}^{S}(X).$$
    Let $k$ be a field of characteristic $p>0$. Then we have sequences of Tannakian subcategories
    $$\mathcal{C}^{Loc}(X)\subseteq\mathcal{C}^{N}(X)\subseteq\mathcal{C}^{EN}(X)\subseteq\mathcal{C}^{S}(X),\quad\mathcal{C}^{uni}(X)\subseteq\mathcal{C}^{ELoc}(X)\subseteq\mathcal{C}^{EN}(X)\subseteq\mathcal{C}^{S}(X).$$
\end{Remark}

\begin{Lemma}[{\cite[Lemma~1.4.3]{CLM22}}]\label{pullbacknef}
    Let $k$ be a field, $X$ a proper algebraic space over $k$, $f : Y\rightarrow X$ a proper morphism of algebraic spaces, $E$ a finite locally free $\mathcal{O}_X$-module. Then
    \begin{enumerate}
        \item If $E$ is nef, then $f^*E$ is nef.
        \item If $f$ is surjective and $f^*E$ is nef, then $E$ is nef.
    \end{enumerate}
\end{Lemma}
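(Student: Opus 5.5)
The plan is to use the curve-theoretic characterization of nefness. A vector bundle $E$ on a proper algebraic space $X$ over $k$ is nef if and only if $\mathcal{O}_{\mathbb{P}(E)}(1)$ is nef on $\mathbb{P}(E)$. Equivalently, for every proper integral (we may take normal) curve $C$ over $k$, every morphism $g:C\rightarrow X$ and every line bundle quotient $g^*E\twoheadrightarrow L$, we have $\deg_C L\geq 0$. Indeed, a curve in $\mathbb{P}(E)$ corresponds to such a pair $(g,\,g^*E\twoheadrightarrow L)$ with $L$ the pullback of $\mathcal{O}(1)$, and normalizing the curve does not change the sign of the degree. Both parts of the lemma then reduce to transporting test curves along $f$.

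For (1), let $h:C\rightarrow Y$ be a test curve and $h^*f^*E\twoheadrightarrow L$ a line bundle quotient. Then $f\circ h:C\rightarrow X$ is again a morphism from a proper curve, and $(f\circ h)^*E=h^*f^*E\twoheadrightarrow L$ is a quotient of the same kind. Nefness of $E$ gives $\deg L\geq 0$, so $f^*E$ is nef. Properness of $f$ is not even needed here beyond ensuring that $Y$ is proper over $k$, so that the notion makes sense.

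For (2), start with a test curve $g:C\rightarrow X$ ($C$ normal, proper, integral) and a quotient $g^*E\twoheadrightarrow L$. The key step is to lift $C$ to $Y$ up to a finite cover. Form $Y_C:=Y\times_X C$, which is proper over $C$, and surjective over $C$ because $f$ is surjective. Its generic fibre over the generic point $\eta$ of $C$ is a nonempty algebraic space of finite type over $\kappa(\eta)$, so it has a closed point $y$ with $[\kappa(y):\kappa(\eta)]<\infty$. Let $C'$ be the closure of $y$ in $Y_C$ with its reduced structure. Then $C'$ is an integral proper algebraic space of dimension one, hence a scheme. Let $\tilde C\rightarrow C'$ be its normalization. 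The composite $\pi:\tilde C\rightarrow C$ is a finite surjective morphism of degree $d=[\kappa(\tilde C):\kappa(C)]>0$, and the projection gives $h:\tilde C\rightarrow Y$ with $f\circ h=g\circ\pi$. Now $h^*f^*E=\pi^*g^*E\twoheadrightarrow\pi^*L$ is a line bundle quotient, so nefness of $f^*E$ gives $\deg\pi^*L\geq 0$. Since $\deg\pi^*L=d\cdot\deg L$, we conclude $\deg L\geq 0$, so $E$ is nef.

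The main obstacle is this lifting step in the generality of algebraic spaces. One must check three things: that a closed point of the generic fibre exists with finite residue field extension, that its closure is one-dimensional and surjects onto $C$, and that one-dimensional proper integral algebraic spaces are schemes, so that degree theory applies. If this becomes delicate, I would instead argue via the numerical criterion on $\mathbb{P}(E)$. The base change $\mathbb{P}(f^*E)=\mathbb{P}(E)\times_X Y\rightarrow\mathbb{P}(E)$ is proper and surjective and pulls $\mathcal{O}(1)$ back to $\mathcal{O}(1)$. One can then use the standard fact that nefness of a line bundle descends along proper surjections, which is proved by the same curve-lifting argument but only for line bundles.
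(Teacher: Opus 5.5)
Your proof is correct. The paper gives no proof of this lemma; it simply quotes it from \cite{CLM22}. Your argument is the standard one for such a statement.

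You test nefness of $E$ on pairs $(g\colon C\to X,\ g^*E\twoheadrightarrow L)$, which is the curve criterion for $\mathcal{O}_{\mathbb{P}(E)}(1)$ read through the universal property of $\mathbb{P}(E)$. Part (1) is then just composition of test curves. Part (2) dominates a test curve, after a finite cover of degree $d>0$, by a curve mapping to $Y$, and uses $\deg\pi^*L=d\cdot\deg L$.

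On the step you flagged, you can avoid appealing to ``one-dimensional proper algebraic spaces are schemes'':
\begin{enumerate}
\item Since $\kappa(C')=\kappa(y)$ is finite over $k(C)$, $C'$ has dimension one.
\item Its generic fibre over $C$ is the single point $y$; this is exactly where choosing $y$ closed in $Y_\eta$ matters.
\item Its fibres over closed points are proper closed subsets of the irreducible one-dimensional $C'$, hence finite.
\item So $C'\to C$ is proper and quasi-finite, hence finite. An algebraic space finite over the scheme $C$ is itself a scheme.
\end{enumerate}

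Your fallback via $\mathbb{P}(f^*E)=\mathbb{P}(E)\times_X Y$ is equivalent. It just isolates the line-bundle case of the same curve-lifting argument.
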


The following results are well-known. We prove them for convenience.

\begin{Proposition}\label{pullbackall}
    Let $k$ be a field, $f:X\rightarrow S$ a morphism of geometrically reduced connected schemes proper over $k$. Then
    \begin{enumerate}
        \item For any $E\in\mathcal{C}^{NF}(S)$, we have $f^*E\in\mathcal{C}^{NF}(X)$.
        \item For any $E\in\mathcal{C}^{N}(S)$, we have $f^*E\in\mathcal{C}^{N}(X)$.
        \item For any $E\in\mathcal{C}^{F}(S)$, we have $f^*E\in\mathcal{C}^{F}(X)$.
        \item For any $E\in\mathcal{C}^{Loc}(S)$, we have $f^*E\in\mathcal{C}^{Loc}(X)$.
        \item For any $E\in\mathcal{C}^{\acute{e}t}(S)$, we have $f^*E\in\mathcal{C}^{\acute{e}t}(X)$.
        \item For any $E\in\mathcal{C}^{uni}(S)$, we have $f^*E\in\mathcal{C}^{uni}(X)$.
    \end{enumerate}
\end{Proposition}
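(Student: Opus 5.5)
The plan is to verify each of the six statements directly from the definitions, handling them in order of increasing dependence: (1), then (6), (4), (5), (2), (3). Throughout I use that $f^*$ is an exact tensor functor on vector bundles. It commutes with $\otimes$, $\oplus$, duals, and with the Frobenius ($f\circ F_X=F_S\circ f$ for absolute Frobenius). It also preserves short exact sequences of vector bundles, and hence preserves subbundle inclusions and quotients.

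For (1), I would apply Lemma~\ref{pullbacknef}(1) to $E$ and to $E^\vee$, using $f^*(E^\vee)\cong (f^*E)^\vee$. Both pullbacks are nef, so $f^*E$ is numerically flat. For (6), I apply $f^*$ to a filtration with graded pieces $\mathcal{O}_S$. This gives a filtration of $f^*E$ by subbundles with graded pieces $f^*\mathcal{O}_S\cong\mathcal{O}_X$. For (4), if $F_S^{n*}E\cong\mathcal{O}_S^{\oplus r}$, then $F_X^{n*}f^*E\cong f^*F_S^{n*}E\cong\mathcal{O}_X^{\oplus r}$. For (5), given a finite étale cover $\phi:P\to S$ trivializing $E$, I form the base change $P\times_S X\to X$. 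It is again finite étale, and pulls $f^*E$ back to the pullback of $\phi^*E$, which is trivial. If one insists that the covering be connected, I pass to a connected component; the restriction remains finite étale and surjective onto the connected $X$.

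For (2), finiteness is preserved because $f^*h(E)\cong h(f^*E)$ for every $h\in\mathbb{N}[t]$. Hence $f(E)\cong g(E)$ implies $f(f^*E)\cong g(f^*E)$. Now let $E\cong E_2/E_1$ with $E_1\hookrightarrow E_2\hookrightarrow F$, where $E_1,E_2$ are numerically flat and $F$ is finite. Pulling back gives $f^*E_1\hookrightarrow f^*E_2\hookrightarrow f^*F$ with $f^*E\cong f^*E_2/f^*E_1$. The first two terms are numerically flat by (1), and $f^*F$ is finite. For (3), the argument is the same, with the additional identity $f^*\tilde h(E)\cong\tilde h(f^*E)$, which follows from $(F_X^i)^*f^*=f^*(F_S^i)^*$.

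The only step I expect to require care is that pullback preserves the injectivity of the maps in these filtrations and subquotient presentations. Since all the sheaves involved are vector bundles and the maps are subbundle inclusions with locally free cokernels (they are monomorphisms in $\mathcal{C}^{NF}$), the defining sequences are locally split, and pullback keeps them exact. If the paper's notion of subobject only gives injections of sheaves, I would first invoke that kernels and cokernels in the abelian category $\mathcal{C}^{NF}(S)$ are vector bundles, which makes the sequences locally split.
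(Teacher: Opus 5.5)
Your proposal is correct and follows the paper's proof item by item: Lemma~\ref{pullbacknef} for (1), compatibility of $f^*$ with $\otimes$, $\oplus$, duals and Frobenius for (2)--(4), base change of the finite \'etale cover for (5), and pulling back the filtration for (6), with (1) feeding into the subquotient presentations in (2) and (3). You also observe that the defining sequences are locally split, so $f^*$ keeps $E_1\hookrightarrow E_2\hookrightarrow F$ injective; this fills in a point the paper passes over silently. For (3), note that this observation uses that Frobenius finite bundles are numerically flat, which is implicit in $\mathcal{C}^F(X)\subseteq\mathcal{C}^{NF}(X)$.
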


\begin{proof}
    (1) It follows by Lemma~\ref{pullbacknef}.

    (2) Let $F$ be a finite vector bundle on $S$, then there exist distinct $g(t),h(t)\in\mathbb{N}[t]$ such that $g(F)\cong h(F)$. It follows that $$g(f^*F)\cong f^*g(F)\cong f^*h(F)\cong h(f^*F),$$
    i.e. $f^*F$ is a finite vector bundle on $X$.

    If $E\in\mathcal{C}^N(S)$, then $E\cong F_1/F_2$, where $F_2\hookrightarrow F_1\hookrightarrow F$, $F_1,F_2\in\mathcal{C}^{NF}(S)$ and $F$ is a finite bundle on $S$. Then we have $f^*F_2\hookrightarrow f^*F_2\hookrightarrow f^*F$, where $f^*F_1,f^*F_2\in\mathcal{C}^{NF}(X)$ by (1). Hence $f^*E\cong f^*F_1/f^*F_2\in\mathcal{C}^N(X)$.

    (3) Let $F$ be a Frobenius finite vector bundle on $S$, then there exist distinct $g(t),h(t)\in\mathbb{N}[t]$ such that $\tilde{g}(F)\cong \tilde{h}(F)$. It follows that $$\tilde{g}(f^*F)\cong f^*\tilde{g}(F)\cong f^*\tilde{h}(F)\cong \tilde{h}(f^*F),$$
    i.e. $f^*F$ is a Frobenius finite vector bundle on $X$.

    If $E\in\mathcal{C}^F(S)$, then $E\cong F_1/F_2$, where $F_2\hookrightarrow F_1\hookrightarrow F$, $F_1,F_2\in\mathcal{C}^{NF}(S)$ and $F$ is a Frobenius finite vector bundle on $S$. Then we have $f^*F_2\hookrightarrow f^*F_2\hookrightarrow f^*F$, where $f^*F_1,f^*F_2\in\mathcal{C}^{NF}(X)$ by (1). Hence $$f^*E\cong f^*F_1/f^*F_2\in\mathcal{C}^F(X).$$

    (4) Let $E\in\mathcal{C}^{Loc}(S)$, then there exists $n$ such that $F_S^{n*}E\cong \mathcal{O}_S^{\oplus r}$. It follows that $$F_{X}^{n*}f^*E\cong f^* F_S^{n*}E\cong f^*\mathcal{O}_S^{\oplus r}\cong \mathcal{O}_X^{\oplus r}\text{, i.e. }f^*E\in\mathcal{C}^{Loc}(X).$$

    (5) Let $E\in\mathcal{C}^{\acute{e}t}(S)$, then there exists a finite \'etale covering $\phi:P\rightarrow S$ such that $\phi^*E\cong \mathcal{O}_P^{\oplus r}$. Consider the pullback diagram
    \[
        \begin{tikzcd}
            f^*P\arrow[r,"\phi'"]\arrow[d,"f'"]\arrow[dr, phantom, "\ulcorner", very near start]&X\arrow[d,"f"]\\
            P\arrow[r,"\phi"]&S
        \end{tikzcd}
    \]
    Then $\phi':f^*P\rightarrow X$ is a finite \'etale covering and
    $${\phi'}^{*}f^*E\cong {f'}^*\phi^*E\cong {f'}^*\mathcal{O}_P^{\oplus r}\cong \mathcal{O}_{f^*P}^{\oplus r}\text{, i.e. }f^*E\in\mathcal{C}^{\acute{e}t}(X).$$

    (6) Let $E\in\mathcal{C}^{uni}(S)$, then there exists a filtration
    $$0\hookrightarrow E_1\hookrightarrow \cdots\hookrightarrow E_n=E,$$
    such that $E_{i+1}/E_i\cong\mathcal{O}_S$ for any $i$. Applying the functor $f^*$, we obtain a filtration of $f^*E$:
    $$0\hookrightarrow f^*E_1\hookrightarrow \cdots\hookrightarrow f^*E_n=f^*E,$$
    such that $f^*E_{i+1}/f^*E_i\cong f^*(E_{i+1}/E_i)\cong \mathcal{O}_X$ for any $i$. Hence $f^*E\in\mathcal{C}^{uni}(X)$.
\end{proof}

\begin{Proposition}\label{pullbacksaturated}
    Let $k$ be a field, $f:X\rightarrow S$ a morphism of geometrically reduced connected schemes proper over $k$, $*\in\{N,F,Loc,\acute{e}t\}$. Then for any $E\in\overline{\mathcal{C}^{*}(S)}$, we have $f^*E\in\overline{\mathcal{C}^{*}(X)}$.
\end{Proposition}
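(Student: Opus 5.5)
This is a direct combination of Proposition~\ref{pullbackall} with Lemma~\ref{saturatedinducefunctor}. Fix $*\in\{N,F,Loc,\acute{e}t\}$ and take $\mathcal{C}_S:=\mathcal{C}^{*}(S)$ and $\mathcal{C}_X:=\mathcal{C}^{*}(X)$. By the corresponding item (2), (3), (4) or (5) of Proposition~\ref{pullbackall}, pullback along $f$ sends $\mathcal{C}^{*}(S)$ into $\mathcal{C}^{*}(X)$. This is exactly the hypothesis of Lemma~\ref{saturatedinducefunctor}, and that lemma gives the claim $f^*\overline{\mathcal{C}^{*}(S)}\subseteq\overline{\mathcal{C}^{*}(X)}$.

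For completeness I would also spell out the argument inside Lemma~\ref{saturatedinducefunctor}. Take $E\in\overline{\mathcal{C}^{*}(S)}$ with a filtration $0\hookrightarrow E_1\hookrightarrow\cdots\hookrightarrow E_n=E$ whose graded pieces $E_{i+1}/E_i$ lie in $\mathcal{C}^{*}(S)$. Each short exact sequence $0\to E_i\to E_{i+1}\to E_{i+1}/E_i\to 0$ consists of vector bundles, so it is locally split. Hence $f^*$ keeps it exact, and $f^*E_i\hookrightarrow f^*E_{i+1}$ is again a subbundle with quotient $f^*(E_{i+1}/E_i)\in\mathcal{C}^{*}(X)$. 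Therefore $f^*E$ carries a filtration with graded pieces in $\mathcal{C}^{*}(X)$, which means $f^*E\in\overline{\mathcal{C}^{*}(X)}$.

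The only substantive input is that the unsaturated categories are preserved by pullback, and Proposition~\ref{pullbackall} supplies this. One small point needs checking. In Lemma~\ref{saturatedinducefunctor} the categories are required to be Tannakian categories over $X$ and $S$. For $\mathcal{C}^{*}(X)$ and $\mathcal{C}^{*}(S)$ this holds by Remark~\ref{tannakianrelation}, since the schemes are geometrically reduced, connected and proper over $k$. In characteristic $0$ the cases $*=F$ and $*=Loc$ are vacuous or degenerate, and they need no separate treatment because the same citation covers them. So I do not expect a real obstacle here.
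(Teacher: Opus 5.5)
Your proposal is correct and matches the paper's proof: the paper also just combines Lemma~\ref{saturatedinducefunctor} with the relevant items of Proposition~\ref{pullbackall}. Your added observation, that the graded pieces are vector bundles so each short exact sequence is locally split and stays exact under $f^*$, is a detail the paper's proof of Lemma~\ref{saturatedinducefunctor} uses without stating it.
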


\begin{proof}
    It follows by Lemma~\ref{saturatedinducefunctor} and Proposition~\ref{pullbackall}.
\end{proof}

\begin{Lemma}[{\cite[Theorem~4.1]{Lan12}}]\label{Slangerkunneth}
    Let $k$ be an algebraically closed field, $X$ and $Y$ reduced connected schemes proper over $k$, $x\in X(k)$, $y\in Y(k)$. Then the canonical map 
$$\pi^S(X\times_k Y,(x,y))\rightarrow \pi^S(X,x)\times_k \pi^S(Y,y)$$
is an isomorphism of $k$-group schemes.
\end{Lemma}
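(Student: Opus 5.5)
Lemma~\ref{Slangerkunneth} is a result of Langer, cited from \cite[Theorem~4.1]{Lan12}; the plan below sketches how it can be recovered from the criteria of this paper.

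By Theorem~\ref{kunnethMain}, it suffices to verify condition (1). Pullback induces the required functors among $\mathcal{C}^{NF}(Y)$, $\mathcal{C}^{NF}(X\times_k Y)$ and $\mathcal{C}^{NF}(X)$ by Proposition~\ref{pullbackall}. So the claim to prove is: for every numerically flat $E$ on $X\times_k Y$, the sheaf ${p_X}_*E$ is numerically flat on $X$ and $E$ satisfies base change at $x_0$.

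\textbf{Step 1: local freeness and base change.} Since $X\times_k Y\to X$ is flat and proper with fibres $Y_x\cong Y$, the function $x\mapsto h^0(Y,E|_{\{x\}\times Y})$ controls ${p_X}_*E$ via cohomology and base change (\cite[Theorem 25.1.6]{Vak25}). I would show this function is constant in $x$. The point is that each restriction $E|_{\{x\}\times Y}$ is numerically flat on $Y$ by Lemma~\ref{pullbacknef}. Over an algebraically closed field, numerically flat bundles admit filtrations whose graded pieces are stable numerically flat bundles (Langer's Jordan--Hölder theory). For such bundles, $h^0$ equals the multiplicity of $\mathcal{O}_Y$ in the maximal trivial subobject, and this can be detected through the $S$-fundamental group representation at $y_0$. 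I would then use that the family $x\mapsto E|_{\{x\}\times Y}$ is a family of representations of $\pi^S(Y,y_0)$; the hardest point is exactly this constancy.

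\textbf{Step 2 (main obstacle): showing the trivial part varies trivially.} My first attempt would be a rigidity argument. Consider the relative maximal trivial part, the image of $p_X^*{p_X}_*E\to E$. After proving upper semicontinuity and lower semicontinuity separately, one gets that ${p_X}_*E$ is locally free and commutes with base change. For the lower bound, I would use that $X$ is reduced and connected. The set of points where $h^0$ jumps is closed, and at a jump point a trivial subbundle of the fibre would extend to nearby fibres via deformation theory of morphisms $\mathcal{O}_Y\to E_x$. This requires the obstruction to vanish, and I would try to obtain that from the semisimplicity of the degree-$0$ stable graded pieces. If this fails, the fallback is to reduce to curves: test along smooth proper curves $C\to X$, where numerically flat means semistable of degree $0$ (Remark~\ref{tannakianrelation}), and use properness plus semistability of degree $0$ on $C\times Y$.

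\textbf{Step 3: numerical flatness of the pushforward.} Once ${p_X}_*E$ is locally free with base change, nefness of ${p_X}_*E$ and of its dual follows from Lemma~\ref{pullbacknef}, together with the embedding $p_X^*{p_X}_*E\hookrightarrow E$. This embedding is obtained as in Proposition~\ref{MaximalTrivial}, but using base change at every point. A subbundle of a numerically flat bundle with degree-$0$ restrictions to curves is numerically flat, since it is nef and of numerically trivial determinant. Its pullback $p_X^*{p_X}_*E$ is then numerically flat, and restricting to $X\times\{y_0\}$ shows that ${p_X}_*E$ is numerically flat. Theorem~\ref{kunnethMain} then gives the isomorphism.
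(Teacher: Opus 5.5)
The paper does not prove this statement; it imports it from Langer \cite[Theorem~4.1]{Lan12}. In Proposition~\ref{kunnethall} it is used in the opposite direction: Langer's isomorphism over $\bar k$, combined with Theorem~\ref{kunnethMain}, produces condition (1).

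Your reduction to condition (1) of Theorem~\ref{kunnethMain} is legitimate. Pullbacks preserve numerical flatness and $H^0(Y,\mathcal{O}_Y)=k$, so ${p_X}_*\mathcal{O}_{X\times_k Y}=\mathcal{O}_X$. But the reduction moves the whole content of Langer's theorem into Step~2, and Step~2 is not supplied.

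\textbf{The mechanism of Step~2 cannot work.} It uses only fibrewise information: each $E|_{\{x\}\times Y}$ is numerically flat with stable degree-$0$ graded pieces, and $X$ is reduced and connected. Such information does not force $h^0$ to be constant, as the following example shows.
Take $Y$ an elliptic curve and $X=\mathbb{P}^1$. Let $E$ be the extension $0\to p_X^*\mathcal{O}(1)\to E\to\mathcal{O}\to 0$ with class $t\otimes\beta$ in $H^0(\mathbb{P}^1,\mathcal{O}(1))\otimes H^1(Y,\mathcal{O}_Y)\subseteq H^1(X\times_k Y,p_X^*\mathcal{O}(1))$, where $t\neq0$ and $\beta\neq 0$.
\begin{itemize}
\item Every fibre $E|_{\{x\}\times Y}$ is an extension of $\mathcal{O}_Y$ by $\mathcal{O}_Y$, hence numerically flat with graded pieces $\mathcal{O}_Y$.
\item Yet $h^0$ equals $1$ where $t(x)\neq0$ and equals $2$ at the zero $x_1$ of $t$.
\item The extra section of $E|_{\{x_1\}\times Y}\cong\mathcal{O}_Y^{\oplus 2}$ does not even lift to first order; its obstruction is a nonzero multiple of $\beta$ in $H^1(Y,\mathcal{O}_Y)$.
\end{itemize}
What excludes this for numerically flat $E$ is positivity in the $X$-direction: here $E|_{\mathbb{P}^1\times\{y\}}$ has degree $1$. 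So any proof of constancy must use numerical flatness of $E$ on $X\times_k Y$ itself, for instance along curves $C\times\{y\}$. Your fallback names this direction but contains no argument.
Treating $x\mapsto E|_{\{x\}\times Y}$ as an algebraic family of $\pi^S(Y,y_0)$-representations is circular, since it is essentially the K\"unneth statement. Even granting it, invariants jump in families, e.g.\ $t\mapsto\left(\begin{smallmatrix}1&t\\0&1\end{smallmatrix}\right)$ for $\mathbb{G}_a$.

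\textbf{Step~3 has a further gap.} A subbundle of a nef bundle need not be nef; already $\mathcal{O}(-1)\subset\mathcal{O}^{\oplus 2}$ on $\mathbb{P}^1$ is a counterexample. From the subbundle $p_X^*{p_X}_*E\hookrightarrow E$ you only get that $({p_X}_*E)^\vee$ is nef. Equivalently, ${p_X}_*E$ has degree $\le 0$ on every curve mapping to $X$. The reverse inequality, i.e.\ numerical triviality of $\det {p_X}_*E$, is what you assume rather than prove. Lemma~\ref{pullbacknef} concerns pullbacks and does not give it.

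So you have correctly located the difficulty, but neither the constancy of $h^0$ nor the numerical flatness of ${p_X}_*E$ is established. As it stands, the statement still rests on Langer's theorem.
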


\begin{Lemma}[{\cite[Lemma~1.4.4]{CLM22}}]\label{basefieldnef}
    Let $X$ be a proper algebraic space over $k$, $E\in\Vect(X)$. Then $E$ is nef iff for every field extension $k\subseteq k'$, the pullback $E\otimes_k k'$ on $X_{k'}$ is nef.
\end{Lemma}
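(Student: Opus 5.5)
Since $E$ is nef exactly when the tautological line bundle $\mathcal{O}_{\mathbb{P}(E)}(1)$ on $\mathbb{P}(E)$ is nef, and $\mathbb{P}(E)\times_k k'\cong\mathbb{P}(E\otimes_k k')$ with $\mathcal{O}(1)$ pulling back to $\mathcal{O}(1)$, I would first reduce the statement to the case of a line bundle $L$ on a proper algebraic space $P$ over $k$. The claim is then that $L$ is nef iff $L_{k'}$ is nef on $P_{k'}$ for every field extension $k'/k$. The ``if'' direction is trivial: take $k'=k$. So everything is in the ``only if'' direction, and I would verify it with the degree criterion on integral proper curves.

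Let $C\subseteq P_{k'}$ be an integral proper curve. It is defined over a finitely generated subextension of $k'/k$, so I would spread it out over a finitely generated $k$-algebra $A\subseteq k'$. This gives a closed subspace $\mathcal{C}\subseteq P_A$, proper over $\operatorname{Spec}A$, with generic fibre $C$.

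After shrinking $\operatorname{Spec}A$ (generic flatness), I may assume $\mathcal{C}\to\operatorname{Spec}A$ is flat with one-dimensional fibres. Then the function $t\mapsto\chi(\mathcal{C}_t,L^{\otimes n}|_{\mathcal{C}_t})$ is locally constant on the connected base, for every $n$. Hence the leading coefficient, which is the degree of $L$ on the fibre, is the same at the generic point and at a closed point $t$.

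The residue field $\kappa(t)$ is a finite extension $k_0/k$, by the Nullstellensatz. The fibre $\mathcal{C}_t$ is a proper curve in $P_{k_0}$, and its degree decomposes as $\sum_i m_i\deg(L|_{D_i})$ over its irreducible components $D_i$ with multiplicities $m_i\ge0$. Each $D_i$ maps under the finite morphism $P_{k_0}\to P$ either to a point or finitely onto an integral curve $D_i'\subseteq P$. By the projection formula, $\deg(L|_{D_i})$ is either $0$ or $\deg(D_i/D_i')\cdot\deg(L|_{D_i'})\ge0$, using that $L$ is nef. Combining this with the previous step gives $\deg(L_{k'}|_C)\ge0$, so $L_{k'}$ is nef.

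The main obstacle is the spreading-out and specialization step. It requires generic flatness and constancy of the Hilbert polynomial for proper flat families of algebraic spaces (rather than schemes), and it requires checking that the degree on a possibly non-reduced fibre is computed by the component decomposition. If the algebraic-space setting causes trouble, I would first pass to a scheme via a proper surjective morphism from a scheme, using Lemma~\ref{pullbacknef}(2) to descend nefness.
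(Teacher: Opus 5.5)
Your proof is correct. The paper gives no argument of its own to compare with: it imports this lemma from \cite[Lemma~1.4.4]{CLM22}, so your write-up is a self-contained substitute for the citation.

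You correctly identify the real obstacle. An integral curve $C\subseteq P_{k'}$ cannot just be pushed forward along $P_{k'}\to P$, because for transcendental $k'$ its image can be dense in a subspace of dimension $>1$. For example, the line $x=az$ in $\mathbb{P}^2_{k(a)}$ dominates $\mathbb{P}^2_k$. Your remedy works:
\begin{enumerate}
\item spread $C$ out over a finitely generated $A\subseteq k'$ and flatten;
\item use local constancy of $\chi(\mathcal{C}_t,L^{\otimes n}|_{\mathcal{C}_t})$ to move the degree to a closed fibre over a finite extension $k_0/k$;
\item there $P_{k_0}\to P$ is finite, so the projection formula applies.
\end{enumerate}
All the inputs hold for algebraic spaces: limit arguments, generic flatness, constancy of Euler characteristics in flat proper families, and $\deg=\sum_i m_i\deg(L|_{D_i})$ on a one-dimensional proper scheme. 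Your Chow-lemma fallback through both parts of Lemma~\ref{pullbacknef} is also sound, since a proper surjection $P'\to P$ stays proper surjective after $\otimes_k k'$.

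Only cosmetic tidying is needed:
\begin{itemize}
\item The generic fibre is really a model $C_\eta$ over $\operatorname{Frac}(A)$ with $C_\eta\otimes k'\cong C$; degrees agree by flat base change.
\item The case of a component $D_i$ mapping to a point cannot occur, because $P_{k_0}\to P$ is finite.
\item The projection formula compares degrees computed over $k$. These are $[k_0:k]$ times the degrees over $k_0$, so the sign conclusion is unaffected.
\end{itemize}
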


\begin{Lemma}\label{NFiff}
    Let $k$ be a field, $K/k$ a field extension, $X$ a connected scheme proper over $k$, $E\in\Vect(X)$. Then $E\in\mathcal{C}^{NF}(X)$ iff $E\otimes_k K\in \mathcal{C}^{NF}(X_K)$.
\end{Lemma}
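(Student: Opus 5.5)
The plan is to reduce everything to Lemma~\ref{basefieldnef} applied to both $E$ and $E^\vee$. By definition, $E\in\mathcal{C}^{NF}(X)$ means $E$ and $E^\vee$ are both nef on $X$. Similarly, $E\otimes_k K\in\mathcal{C}^{NF}(X_K)$ means $E\otimes_k K$ and $(E\otimes_k K)^\vee$ are both nef on $X_K$. The first step is the routine identification $(E\otimes_k K)^\vee\cong E^\vee\otimes_k K$. It holds because pullback along the flat morphism $p:X_K\rightarrow X$ commutes with duals of locally free sheaves: $p^*\mathcal{H}om(E,\mathcal{O}_X)\cong\mathcal{H}om(p^*E,\mathcal{O}_{X_K})$ for $E$ locally free of finite rank.

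For the direction ($\Rightarrow$), suppose $E$ and $E^\vee$ are nef. Lemma~\ref{basefieldnef}, in its direction ``nef implies nef after every field extension'', gives that $E\otimes_k K$ and $E^\vee\otimes_k K\cong(E\otimes_k K)^\vee$ are nef. Alternatively, one can apply Lemma~\ref{pullbacknef}(1) to the proper morphism $p$, viewing $X_K$ as a $K$-scheme. Either way, $E\otimes_k K\in\mathcal{C}^{NF}(X_K)$.

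For the direction ($\Leftarrow$), the hypothesis concerns only one extension $K$, whereas Lemma~\ref{basefieldnef} as stated refers to every extension $k'$. So the key step is to pass from $K$ to an arbitrary $k'$. Given $k'/k$, I would choose a common extension $L$ containing both $K$ and $k'$, for instance a residue field of $K\otimes_k k'$, which is a nonzero ring. Then $E\otimes_k L=(E\otimes_k K)\otimes_K L$ is nef on $X_L$, by applying Lemma~\ref{basefieldnef} over the base field $K$ to the nef bundle $E\otimes_k K$ on the proper $K$-scheme $X_K$. Applying Lemma~\ref{basefieldnef} again over the base field $k'$, now to $E\otimes_k k'$ on $X_{k'}$ and the extension $L/k'$, shows that $E\otimes_k k'$ is nef. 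As $k'$ was arbitrary, Lemma~\ref{basefieldnef} over $k$ gives that $E$ is nef. The identical argument applied to $E^\vee$, using $(E\otimes_k K)^\vee\cong E^\vee\otimes_k K$, shows that $E^\vee$ is nef, so $E\in\mathcal{C}^{NF}(X)$.

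I expect the main obstacle to be this descent step: making sure Lemma~\ref{basefieldnef} can be applied over the intermediate base fields $K$ and $k'$, and that nefness descends along a single extension. If that feels shaky, a direct alternative is available. By Lemma~\ref{pullbacknef}(2), it suffices to check nefness after a surjective proper pullback, and the projection $X_K\rightarrow X$ is surjective. So one can argue with curves: for a curve $C\rightarrow X$ and a quotient line bundle of $E|_C$, base change to $K$ yields a curve in $X_K$ with a quotient line bundle of the same degree, which is therefore nonnegative. Connectedness of $X$ plays no role beyond the standing conventions.
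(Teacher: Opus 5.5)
Your reduction to $E$ and $E^\vee$, the identification $(E\otimes_k K)^\vee\cong E^\vee\otimes_k K$, and the direction ($\Rightarrow$) via Lemma~\ref{basefieldnef} are fine and match the paper, whose whole proof is a one-line appeal to that lemma.

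The direction ($\Leftarrow$) as you wrote it has a genuine gap, because the final step is circular. Lemma~\ref{basefieldnef} over $k'$ lets you conclude that $E\otimes_k k'$ is nef only if $E\otimes_k k''$ is nef for \emph{every} $k''/k'$. Knowing it for the single extension $L/k'$ is exactly descent of nefness along one extension. That is the statement you are trying to prove, merely moved from $K/k$ to $L/k'$; the case $k'=k$, $L=K$ of your step is literally the original claim. In fact the ``if'' direction of Lemma~\ref{basefieldnef} as stated is trivial (take $k'=k$). So no juggling of common extensions can extract descent from it. The paper's one-liner tacitly uses the cited result in the stronger form ``nef after some extension implies nef''.

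Your fallback is the right idea, but its stated justification fails. Lemma~\ref{pullbacknef} concerns a proper morphism $Y\to X$ of proper spaces over the same field $k$. The map $p\colon X_K\to X$ is not proper, not even of finite type, unless $K/k$ is finite, and the case used later in the paper is $K=\bar k$. The same objection applies to your alternative for ($\Rightarrow$) via Lemma~\ref{pullbacknef}(1).

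The curve argument therefore has to stand on its own, and the point to address is that base change does not preserve integrality of curves. Concretely:
\begin{itemize}
\item We have $\mathbb{P}(E)\times_k K=\mathbb{P}(E\otimes_k K)$, with $\mathcal{O}(1)$ pulling back to $\mathcal{O}(1)$.
\item For an integral curve $C\subseteq\mathbb{P}(E)$, flat base change gives $\deg_k(\mathcal{O}(1)|_C)=\chi(\mathcal{O}(1)|_C)-\chi(\mathcal{O}_C)=\deg_K(\mathcal{O}(1)|_{C_K})$.
\item The latter equals $\sum_i m_i\deg(\mathcal{O}(1)|_{C_i})$, summed over the irreducible components $C_i$ of $C_K$ with their multiplicities $m_i$.
\item Each term is $\geq 0$ because $\mathcal{O}_{\mathbb{P}(E\otimes_k K)}(1)$ is nef.
\end{itemize}
Hence $E$ is nef, and the same argument applies to $E^\vee$. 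With this replacing the circular step, your proof is complete and more self-contained than the paper's.
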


\begin{proof}
    If follows immediately by Lemma~\ref{basefieldnef}.
\end{proof}

The K\"unneth formula for certain fundamental group schemes has been studied by many mathematicians. Nori\cite{Nor82} showed the K\"unneth formula for unipotent fundamental group scheme under any field; Mehta \& Subramanian\cite{MeSu02} showed the K\"unneth formula for Nori fundamental group scheme under algebraically closed field; Amrutiya \& Biswas\cite{AmBi10} showed the K\"unneth formula for F-fundamental group scheme under algebraically closed field; Langer\cite{Lan12} showed the K\"unneth formula for S-fundamental group scheme under algebraically closed field; Zhang\cite{Zha13} showed the K\"unneth formula for local fundamental group scheme under perfect field; Zhang\cite{Zha13} showed the homotopy sequence for \'etale fundamental group scheme under perfect field is exact, which implies the K\"unneth formula for \'etale fundamental group scheme under perfect field; Otabe\cite{Ota17} showed the K\"unneth formula for EN-fundamental group scheme under field of characteristic 0. We use new method to generalize the K\"unneth formula for certain fundamental group schemes to more general case.

\begin{Proposition}\label{kunnethall}
Let $k$ be a field, $X$ and $Y$ geometrically reduced connected schemes proper over $k$, $x\in X(k)$, $y\in Y(k)$, $p_X: X\times_k Y\rightarrow X$ and $p_Y: X\times_k Y\rightarrow Y$ the projections. Then the natural homomorphism $\pi^{*}(X\times_k Y,(x,y))\rightarrow \pi^{*}(X,x)\times_k \pi^{*}(Y,y)$ is an isomorphism for $*\in\{S,N,F,Loc,\acute{e}t,uni,EN,EF,ELoc,E\acute{e}t\}$.
\end{Proposition}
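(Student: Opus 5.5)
The plan is to establish the Künneth formula first for the largest category, the S-fundamental group scheme over an arbitrary field, and then transfer it to every other category in the list. The transfer uses Proposition~\ref{kunnethdescent} for Tannakian subcategories and Proposition~\ref{SaturatedKunneth} for saturations. Proposition~\ref{pullbackall} and Proposition~\ref{pullbacksaturated} ensure that pullbacks along $p_X,p_Y,i_X,i_Y$ induce functors among the relevant categories. Here $X\times_k Y$ is geometrically reduced, connected (it has a rational point and both factors are geometrically connected), and proper, so all categories are defined on it.

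\textbf{Step 1: $*=S$ over any field.} Let $\bar k$ be an algebraic closure of $k$. By Lemma~\ref{isofield}, it suffices to show that the base change to $\bar k$ of $\pi^S(X\times_k Y,(x,y))\to\pi^S(X,x)\times_k\pi^S(Y,y)$ is an isomorphism. By Lemma~\ref{Slangerkunneth}, the Künneth map is an isomorphism for $X_{\bar k},Y_{\bar k}$, which are reduced, connected, and proper. By Theorem~\ref{kunnethMain}, this means: for every numerically flat $E'$ on $X_{\bar k}\times Y_{\bar k}$, the sheaf ${p_{X_{\bar k}}}_*E'$ is numerically flat and $E'$ satisfies base change at $x_{\bar k}$.

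Now take $E\in\mathcal{C}^{NF}(X\times_k Y)$. By Lemma~\ref{NFiff}, $E\otimes_k\bar k$ is numerically flat. Flat base change gives $({p_X}_*E)\otimes_k\bar k\cong {p_{X_{\bar k}}}_*(E\otimes_k\bar k)$, which is numerically flat. In particular it is a vector bundle, so ${p_X}_*E$ is locally free, and Lemma~\ref{NFiff} then gives ${p_X}_*E\in\mathcal{C}^{NF}(X)$. Base change at $x_0$ can be checked after the faithfully flat extension $k\to\bar k$, again using flat base change for $H^0$. So condition (1) of Theorem~\ref{kunnethMain} holds over $k$, and the Künneth formula for $\pi^S$ follows.

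I would rather argue through condition (1) than attempt to compare $\pi^S(X)_{\bar k}$ with $\pi^S(X_{\bar k})$ directly, since that comparison is not available in the paper. The main technical point is the descent of local freeness and nefness; it is handled by flat base change and Lemma~\ref{NFiff}.

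\textbf{Step 2: subcategories.} For $*\in\{N,F,Loc,\acute{e}t,uni\}$, the category $\mathcal{C}^*$ is a Tannakian subcategory of $\mathcal{C}^{NF}$ by Remark~\ref{tannakianrelation}, with the required pullback functors by Proposition~\ref{pullbackall}. Proposition~\ref{kunnethdescent}, applied with $\mathcal{C}'=\mathcal{C}^{NF}$, gives the Künneth formula for these. The categories $F$ and $Loc$ are only meaningful in characteristic $p$, where this is understood.

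\textbf{Step 3: extended groups.} For $*\in\{N,F,Loc,\acute{e}t\}$, Proposition~\ref{SaturatedKunneth} $(1)\Rightarrow(2)$, applied to $\mathcal{C}^*$ with pullbacks supplied by Proposition~\ref{pullbacksaturated}, yields the formula for $EN,EF,ELoc,E\acute{e}t$. Alternatively, one can apply Proposition~\ref{kunnethdescent} directly, since $\overline{\mathcal{C}^*}\subseteq\mathcal{C}^{NF}$.

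The main obstacle is Step 1: verifying that pushforward along $p_X$ and base change commute with field extension, and that numerical flatness descends. Everything else is formal.
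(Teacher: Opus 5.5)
Your proposal is correct and follows essentially the paper's proof: you verify condition (1) of Theorem~\ref{kunnethMain} for $\mathcal{C}^{NF}$ over $k$ by passing to $\bar k$ with Lemma~\ref{Slangerkunneth}, flat base change and Lemma~\ref{NFiff}, and then transfer the result to the other categories via Proposition~\ref{kunnethdescent}. Two small remarks: your explicit descent of local freeness of ${p_X}_*E$ is a point the paper leaves implicit, and your opening appeal to Lemma~\ref{isofield} is never actually used. Your ``alternative'' in Step~3 is exactly what the paper does for the extended groups, and citing Proposition~\ref{SaturatedKunneth} instead is also legitimate.
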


\begin{proof}
Consider the following Cartesian diagram:
\[
    \begin{tikzcd}
        X_{\bar{k}}\times_{\bar{k}}Y_{\bar{k}}\arrow[dr, phantom, "\ulcorner", very near start]\arrow[r,"p_{X_{\bar{k}}}"]\arrow[d,"p_2"]&X_{\bar{k}}\arrow[dr, phantom, "\ulcorner", very near start]\arrow[r]\arrow[d,"p_1"]&\Spec {\bar{k}}\arrow[d]\\
        X\times_k Y\arrow[r,"p_X"]&X\arrow[r]& \Spec k.
    \end{tikzcd}
\]
For any $E\in\mathcal{C}^{NF}(X\times_k Y)$, by Lemma~\ref{NFiff} we have $E\otimes_k \bar{k}\in \mathcal{C}^{NF}(X_{\bar{k}}\times_{\bar{k}}Y_{\bar{k}})$. By Lemma~\ref{Slangerkunneth} and Theorem~\ref{kunnethMain}, we have ${p_{X_{\bar{k}}}}_*(E\otimes_k \bar{k})\in\mathcal{C}^{NF}(X_{\bar{k}})$ and $E\otimes_k \bar{k}$ satisfies base change at $x_{\bar{k}}\in X_{\bar{k}}(\bar{k})$ lying over $x\in X(k)$. 

Consider cohomology and flat base change 
$$({p_X}_*E)\otimes_k \bar{k}=p_1^*{p_X}_*E\cong {p_{X_{\bar{k}}}}_*p_{2}^*E={p_{X_{\bar{k}}}}_*(E\otimes_k \bar{k})\in\mathcal{C}^{NF}(X_{\bar{k}}).$$
Then ${p_X}_*E\in\mathcal{C}^{NF}(X)$ by Lemma~\ref{NFiff} and $(({p_X}_*E)\otimes_k \bar{k})|_{x_{\bar{k}}}\cong ({p_{X_{\bar{k}}}}_*(E\otimes_k \bar{k}))|_{x_{\bar{k}}}$ by taking fibre at $x_{\bar{k}}$.

Let $i_Y:Y\rightarrow X\times_k Y$ send $y'$ to $(x,y')$ and $i_{Y_{\bar{k}}}:Y_{\bar{k}}\rightarrow X_{\bar{k}}\times_{\bar{k}}Y_{\bar{k}}$ send $y_{\bar{k}}'$ to $(x_{\bar{k}},y'_{\bar{k}})$ where $x_{\bar{k}}\in X_{\bar{k}}(\bar{k})$ lies over $x\in X(k)$. Consider the following commutative diagram
\[
    \begin{tikzcd}
        &Y\arrow[rr,"i_Y" near start]\arrow[dd]&&X\times_k Y\arrow[dd,"p_X" near start]\\
        Y_{\bar{k}}\arrow[ru,"p_3"]\arrow[rr,crossing over,"i_{Y_{\bar{k}}}" near start]\arrow[dd]&&X_{\bar{k}}\times_{\bar{k}}Y_{\bar{k}}\arrow[ru,"p_2"]&\\
        &x\arrow[rr]&&X\\
        x_{\bar{k}}\arrow[rr]\arrow[ru,"p_4"]&&X_{\bar{k}}\arrow[from=uu,crossing over,"p_{X_{\bar{k}}}" near start]\arrow[ru,"p_1"]&
    \end{tikzcd}.
\]
Then for any $E\in\mathcal{C}^{NF}(X\times_k Y)$, we have a natural homomorphism
$$\varphi:({p_X}_*E)|_{x}\rightarrow H^0(Y,i_Y^*E),$$
which yields a natural homomorphism
$$\varphi_{\bar{k}}:({p_X}_*E)|_{x}\otimes_k \bar{k}\rightarrow H^0(Y,i_Y^*E)\otimes_k \bar{k}.$$
Since $E\otimes_k \bar{k}$ satisfies base change at $x_{\bar{k}}$, we have the following commutative diagram
$$\begin{tikzcd}
    ({p_X}_*E)|_{x}\otimes_k \bar{k}\arrow[d,"\varphi_{\bar{k}}"]\arrow[r,equal]& (({p_X}_*E)\otimes_k \bar{k})|_{x_{\bar{k}}}\arrow[r,"\cong"]&({p_{X_{\bar{k}}}}_*(E\otimes_k \bar{k}))|_{x_{\bar{k}}}\arrow[d,"\cong"]\\
    H^0(Y,i_Y^*E)\otimes_k \bar{k}\arrow[r,"\cong"]& H^0(Y_{\bar{k}},(i_Y^*E)\otimes_k \bar{k})\arrow[r,equal]& H^0(Y_{\bar{k}},i_{Y_{\bar{k}}}^*(E\otimes_k \bar{k}))
\end{tikzcd}
$$
It follows that $\varphi:({p_X}_*E)|_{x}\xrightarrow{\cong} H^0(Y,i_Y^*E)$, i.e. $E$ satisfies base change at $x$. Hence the natural homomorphism $\pi^{S}(X\times_k Y,(x,y))\rightarrow \pi^{S}(X,x)\times_k \pi^{S}(Y,y)$ is an isomorphism.

For the other case, it follows by the natural isomorphism $\pi^{S}(X\times_k Y,(x,y))\rightarrow \pi^{S}(X,x)\times_k \pi^{S}(Y,y)$, Proposition~\ref{kunnethdescent}, Remark~\ref{tannakianrelation}, Proposition~\ref{pullbackall} and Proposition~\ref{pullbacksaturated}.
\end{proof}

\begin{Corollary}\label{pushforwardproduct}
Let $k$ be a field, $X$ and $Y$ geometrically reduced connected schemes proper over $k$, $p_X: X\times_k Y\rightarrow X$ the projection, $*\in\{S,N,F,Loc,\acute{e}t,uni\}$. Then for any $E\in\mathcal{C}^{*}(X\times_k Y)\text{ }(\text{resp. }\overline{\mathcal{C}^{*}(X\times_k Y)})$, we have ${p_X}_*E\in\mathcal{C}^{*}(X)\text{ }(\text{resp. }\overline{\mathcal{C}^{*}(X)})$.
\end{Corollary}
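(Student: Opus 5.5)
The corollary should follow by combining Proposition~\ref{kunnethall} with the implication $(5)\Rightarrow(1)$ of Theorem~\ref{kunnethMain}. Fix $*\in\{S,N,F,Loc,\acute{e}t,uni\}$. We may assume $X$ and $Y$ have rational points $x_0\in X(k)$, $y_0\in Y(k)$, as implicitly required for the fibre functors defining $\pi^*$.

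\textbf{Non-saturated case.} Set $\mathcal{C}_X=\mathcal{C}^{*}(X)$, $\mathcal{C}_Y=\mathcal{C}^{*}(Y)$ and $\mathcal{C}_{X\times_k Y}=\mathcal{C}^{*}(X\times_k Y)$. Since $X\times_k Y$ is again geometrically reduced, connected and proper, Proposition~\ref{pullbackall} shows that the pullbacks $p_X^*,p_Y^*,i_X^*,i_Y^*$ preserve these categories. Hence the hypotheses of Theorem~\ref{kunnethMain} hold. By Proposition~\ref{kunnethall}, condition (5) of Theorem~\ref{kunnethMain} holds, and therefore so does condition (1): for every $E\in\mathcal{C}^{*}(X\times_k Y)$ we have ${p_X}_*E\in\mathcal{C}^{*}(X)$.

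\textbf{Saturated case.} For $*\in\{N,F,Loc,\acute{e}t\}$, use the categories $\overline{\mathcal{C}^{*}(\cdot)}$ instead. Pullbacks preserve them by Proposition~\ref{pullbacksaturated}. Proposition~\ref{kunnethall} gives the Künneth isomorphism for $EN$, $EF$, $ELoc$ and $E\acute{e}t$, and Theorem~\ref{kunnethMain} then gives ${p_X}_*E\in\overline{\mathcal{C}^{*}(X)}$. For $*\in\{S,uni\}$ the categories are already saturated by the Remark, so this case reduces to the non-saturated one.

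\textbf{Main point to check.} The only real issue is that ``the natural homomorphism is an isomorphism'' in Proposition~\ref{kunnethall} matches condition (5) of Theorem~\ref{kunnethMain} for exactly these categories with the chosen base points. That is definitional, so I expect no obstacle beyond this bookkeeping.
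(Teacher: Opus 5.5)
Your proposal is correct and follows the paper's proof: the paper likewise combines Proposition~\ref{kunnethall} with the implication $(5)\Rightarrow(1)$ of Theorem~\ref{kunnethMain}. It leaves implicit the bookkeeping you spell out, namely pullback functoriality via Propositions~\ref{pullbackall} and \ref{pullbacksaturated}, and the saturatedness of $\mathcal{C}^{NF}$ and $\mathcal{C}^{uni}$ for the $S$ and uni cases.

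One caveat, which the paper shares: the existence of rational points $x_0\in X(k)$, $y_0\in Y(k)$ is an implicit hypothesis of the statement, not something you can assume ``without loss of generality''. It is also what guarantees that $X\times_k Y$ is connected, as your argument uses.
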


\begin{proof}
    It follows by Theorem~\ref{kunnethMain} and Proposition~\ref{kunnethall}.
\end{proof}

\section*{Acknowledgements}
We would like to thank Adrian Langer, Junchao Shentu and Lei Zhang for their enlightening comments and helpful conversations. The authors are partially supported by Applied Basic Research Programs of Science and Technology Commission Foundation of Shanghai Municipality(22JC1402700), National Natural Science Foundation of China(Grant No. 12171352).

\end{document}